\newtheorem{remark}[theorem]{Remark}
\newtheorem{assumption}[theorem]{Assumption}
\newcommand{\be}{\begin{equation}}
\newcommand{\ee}{\end{equation}}
\newcommand{\bea}{\begin{eqnarray}}
\newcommand{\eea}{\end{eqnarray}}
\newcommand{\beas}{\begin{eqnarray*}}
\newcommand{\eeas}{\end{eqnarray*}}
\newcommand{\Grad}{\ensuremath{\nabla}}
\newcommand{\dt}{\Delta{t}}
\newcommand{\cb}{C_{b^{\ast}}}
\newcommand{\mhv}{\mathcal{H}_{V}}
\newcommand{\mhp}{\mathcal{H}_{P}}
\newcommand{\vvvert}{|||}
\newcommand{\alb}{(1 + \alpha C^{H^{1}}_{r})}
\newcommand{\bstar}[3]{b^*\left(#1,#2,#3\right)}
\begin{document}

\title{Error Analysis of Supremizer Pressure Recovery for POD based Reduced Order Models of the time-dependent Navier-Stokes Equations}

\author{Kiera Kean \and Michael Schneier}
%	\title{Error Analysis of Supremizer Pressure Recovery for POD based Reduced Order Models of the time-dependent Navier-Stokes Equations \thanks{Submitted to the editors 9/13/2019
%		\funding{The research of the first author was partially supported by 
%			NSF grants NSF grant DMS 1817542.}}}
%
%\author{Kiera Kean \thanks{Department of Mathematics,  
%		University of Pittsburgh,
%		Pittsburgh, PA 15206
%		({\tt kkh16@pitt.edu}).} \and Michael Schneier\thanks{Corresponding author. Department of Mathematics,  
%		University of Pittsburgh,
%		Pittsburgh, PA 15206 ({\tt mhs64@pitt.edu}).}}
%
%\headers{Supremizer Pressure Recovery Analysis}{K. Kean and M. Schneier}
\maketitle
%
%
%\begin{keywords}
%Navier-Stokes equations, proper orthogonal decomposition, pressure recovery, supremizer stabilization
%\end{keywords}
%
%\begin{AMS}
%	65M12, 65M15, 65M60
%\end{AMS}

\begin{abstract}
	For incompressible flow models, the pressure term serves as a Lagrange multiplier to ensure that the incompressibility constraint is satisfied. In engineering applications, the pressure term is necessary for calculating important quantities based on stresses like the lift and drag. For reduced order models generated via a Proper orthogonal decomposition, it is common for the pressure to drop out of the equations and produce a velocity-only reduced order model. To recover the pressure, many techniques have been numerically studied in the literature; however, these techniques have undergone little rigorous analysis. In this work, we examine two of the most popular approaches: pressure recovery through the Pressure Poisson equation and recovery via the momentum equation through the use of a supremizer stabilized velocity basis. We examine the challenges that each approach faces and prove stability and convergence results for the supremizer stabilized approach. We also investigate numerically the stability and convergence of the supremizer based approach, in addition to its performance against the Pressure Poisson method.
\end{abstract}

\section{Introduction}
Let $\Omega \subset \mathbb{R}^{d}$, $d=2,3$ be a regular open domain with Lipschitz continuous boundary $\Gamma$. We consider the Navier-Stokes equations (NSE) with no-slip boundary conditions:
\begin{equation}\label{eqn:nse-1}
\begin{aligned}
&u_t + u\cdot\nabla u + \nabla p - \nu\Delta u =  f,\ \text{and } \nabla \cdot u =  0,\ \text{in} \ \Omega \times (0,T]   \\
%\nabla \cdot u &=  0,\  \Omega \times (0,T],  \\
&u   =  0, \ \text{on} \ \Gamma \times (0,T], \ \text{and } u(x,0)  =  u_0(x), \ \text{in} \ \Omega,  \\
%u(x,0) & =  u_0(x), \ \Omega . 
\end{aligned}
\end{equation}
where $u$ is the velocity, $p$ is the pressure, $f$ is the known body force, and $\nu$ is the viscosity. 

In recent years, there has been a growing interest in the application of reduced order models (ROMs) to modeling incompressible flows \cite{FMPT18,HRS15,NMT11,RAMBR17,SR18,XWWI17,V05}. Galerkin-based ROMs use experimental data, or solutions generated from full-order numerical schemes, i.e., finite element or finite volumes schemes, to generate a low dimensional basis. Due to the low dimensionality of the ROM basis, computational costs can be orders of magnitude smaller when compared to these full-order schemes. In practice, the data used to generate the ROM basis will often be weakly divergence-free. This divergence-free property causes the pressure term to drop out of the ROM formulation, leading to a velocity-only ROM. However, in almost every setting, accurate recovery of the pressure is required to calculate forces on walls or immersed boundaries. Additionally, the pressure term can be used to calibrate codes and models with (reliable) pressure data. 

The problem tackled herein is how to recover the discrete pressure, $p_{m}$, reliably and accurately from a (discretely) divergence-free POD velocity $u_{r}$. Several approaches have been used in the literature, but no validation of their accuracy and stability has been conducted. The two most popular approaches are: 
\\\\
	{ (1) Solving the Pressure Poisson equation (PPE)}: \ 
\begin{equation}
\Delta p_{m}
=  - \nabla\cdot((u_{r} \cdot \nabla) u_{r})  + \nabla \cdot f + BC
\quad \mbox{in } \Omega\,,
\label{eqn:pressure-poisson}
\end{equation}
which is obtained by taking the divergence of the NSE~\eqref{eqn:nse-1}. Here, $BC$ is a Neumann boundary condition which will be derived in Section \ref{sec:PPE}.
\\\\
{(2) Determining the pressure via the momentum equation recovery formulation  \newline (MER):} \
\begin{equation}
\nabla p_{m} = u_{t,r} + u_{r} \cdot \nabla u_{r} - \nu \Delta u_{r} + f \quad \mbox{in } \Omega\,.
\end{equation}
 In practice, this involves using the supremizer stabilization technique developed in \cite{BMQR15,RV07} to ensure compatibility between the pressure and velocity spaces.
 
 Herein, we analyze the stability and convergence  of the MER method's, and briefly review the PPE approach. In the ROM literature, the PPE has yielded accurate results; however, we we will see in the derivation of the discrete equations, as well as in the numerical experiments that the Neumann boundary condition leads to a loss of accuracy, especially within the boundary layer. 
 
 The MER method does not require any boundary conditions. Surprisingly, however, it does not work universally. Its reliability will be dependent on the classic inf-sup condition, as well as an \textit{a priori} computable constant dependent on the angle between the initial POD velocity space and supremizer space. We show in the numerical experiments that for the same number of basis functions, the MER approach yields more accurate solutions for the pressure than the PPE method.
 
 The rest of this paper is organized as follows: In Section \ref{sec:notation}, we introduce notation and state preliminary results. In Section \ref{sec:POD_sec}, we outline the construction of our ROM via a proper orthogonal decomposition. In Section \ref{sec:pressure_recovery}, we present the derivation of the PPE and MER. In Section \ref{sec:error_analysis}, we prove stability and convergence results for the PPE and MER formulations. In Section \ref{sec:numerical_experiments}, we numerically investigate the performance of these pressure recovery techniques. In Section \ref{sec:conclusions}, we end the paper with conclusions and discussion of future research directions.

\subsection{Related Work}
 
For pressure recovery, the PPE has been studied extensively within both the finite element setting \cite{GM87,JL04,SSPG06} and the ROM setting \cite{ANR09,CIJS14,NPM05,SHMLR17}. In \cite{CIJS14}, a numerical comparison was performed for a formulation of the PPE involving pressure basis functions versus one which strictly relied on the velocity modes. In \cite{NPM05}, the authors explored the need for a pressure term, determined via the PPE, for ROM simulations of shear flows. In \cite{SHMLR17}, the authors used the PPE to recover the pressure for a finite volume based ROM of vortex shedding around a circular cylinder.

  The supremizer stabilization approach for recovering the pressure was introduced in \cite{BMQR15} for the parameterized steady NSE. It was extended to the case where a strongly divergence-free POD velocity basis is used in \cite{FBKR19}. Supremizers have also been used in the context of Petrov Galkerin methods in \cite{AB15, CC19, Y14}.

A different class of approaches studied for recovering the pressure incorporates a pressure stabilization. This approach relaxes the incompressbility constraint, ensuring that the pressure term does not drop out of the ROM formulation. These include the artificial compression scheme studied in the ROM setting in \cite{DILMS19} and the Local Projection Stabilization ROM studied in \cite{R19}.  
\section{Notation and Preliminaries}\label{sec:notation}

In this section, we establish notation and collect preliminary results needed for the numerical analysis and experiments in the following sections.  We denote by $\|\cdot\| = \|\cdot\|_{0}$  the $L^{2}(\Omega)$ norm and by $(\cdot,\cdot)$ the $L^{2}(\Omega)$ inner product. The standard velocity space $X$ and pressure space $Q$ are defined as:
$$
\begin{aligned}
X : =& H^{1}_{0}(\Omega)^{d} = \{ v \in H^{1}(\Omega)^{d} \,:\, v|_{\Gamma} = 0 \} \\
Q : =& L^{2}_{0}(\Omega) = \{ q \in L^{2}(\Omega) \,:\, \int_{\Omega} q dx = 0 \}.
\end{aligned}
$$
For functions $v \in X$, the Poincar\'{e} inequality holds
\begin{equation*}
\begin{aligned}
&\|v\| \leq C_{P}\|\nabla v\|.
\end{aligned}
\end{equation*}
\noindent The space $H^{-1}(\Omega)$ denotes the dual space of bounded linear functionals defined on $H^{1}_{0}(\Omega)=\{v\in H^{1}(\Omega)\,:\,v=0 \mbox{ on } \Gamma\}$; this space is equipped with the norm
$$
\|f\|_{-1}=\sup_{0\neq v\in X}\frac{(f,v)}{\| \nabla v\| } 
\quad\forall f\in H^{-1}(\Omega).
$$

\noindent We assume that the solution of the NSE is a strong solution satisfying the weak formulation
\begin{equation}\label{wfwf}
\begin{aligned}
(u_{t},v)+(u\cdot\nabla u,v)+\nu(\nabla u,\nabla v)-(p
,\nabla\cdot v)  &  =(f,v)&\quad\forall v\in X\\
(\nabla\cdot u,q)  &  =0&\quad\forall q\in Q.\\
\end{aligned}
\end{equation} 
We will consider a discretization of the time interval $[0,T]$ into $N$ separate intervals such that $\Delta t = \frac{T}{N}$ and $t_{n} = n \Delta t$ for $n = 0, \ldots, N$.
We then define the norms
$$
||v||_{p,s} : = \Big(\int_{0}^{T}\|v(\cdot,t)\|_{s}^{p}dt\Big)^{\frac{1}{p}}
\qquad \text{and} \qquad 
||v||_{\infty,s} := \text{ess\,sup}_{[0,T]}\|v(\cdot,t)\|_{s} ,
$$
and their  discrete counterparts
$$
|||v|||_{p,s} : = \Big(\sum_{n=0}^{N}\|v^{n}\|_{s}^{p} \Delta t\Big)^{\frac{1}{p}}
\qquad \text{and} \qquad 
|||v|||_{\infty,s} := \max_{0 \leq n \leq N}\| v^{n}\|_{s}.%\max_{0 \leq n \leq N} \|v(\t^{n})\|_{s} ,
$$
%The space $H^{-1}(\Omega)$ denotes the dual space of bounded linear functionals defined on $H^{1}_{0}(\Omega)=\{v\in H^{1}(\Omega)\,:\,v=0 \mbox{ on } \partial\Omega\}$; this space is equipped with the norm
%$$
%\|f\|_{-1}=\sup_{0\neq v\in X}\frac{(f,v)}{\| \nabla v\| } 
%\quad\forall f\in H^{-1}(\Omega).
%$$

%\subsection{Finite Element Preliminaries}

For the spatial discretization of the NSE, we use a conforming finite element space for the velocity $X_{h}\subset X$ and pressure $Q_{h}\subset Q$ based on a regular triangulation of $\Omega$ having maximum triangle diameter $h$.  
We assume that the finite element spaces satisfy the discrete inf-sup condition: There exists a constant $\beta_{h} >0$ independent of $h$ such that
\begin{equation}\label{eqn:inf-supFE}
\inf_{q_{h} \in Q_{h} \backslash \{0\} }  \sup_{v_{h} \in X_{h} \backslash \{0\}} \frac{(\nabla \cdot v_{h},q_{h})}{\| \nabla v_{h} \| \|q_{h}\|} \geq \beta_{h}.
\end{equation}
In addition, we assume that these finite element spaces fulfill the following approximation properties: 
$$
\begin{aligned}
\inf_{v_h\in X_h}\| v- v_h \|&\leq C(v,\nu) h^{s+1}&\forall v\in H^{s+1}(\Omega)^d,\\
\inf_{v_h\in X_h}\| \nabla ( v- v_h )\|&\leq C(v,\nu) h^s&\forall v\in H^{s+1}(\Omega)^d,\\
\inf_{q_h\in Q_h}\|  q- q_h \|&\leq C(q,\nu) h^k&\forall q\in H^{k}(\Omega).
\end{aligned}
$$
%where $C$ is a positive constant that is independent of $h$. The discretely divergence free subspace is given as
%$$
%V_{h} :=\{v_{h}\in X_{h}\,:\,(\nabla\cdot v_{h},q_{h})=0\,\,\forall
%q_{h}\in Q_{h}\}  \subset X_{h}.
%$$

\noindent We define the trilinear form
$$
b(w,u,v) = (w\cdot\nabla u,v) 
\qquad\forall u,v,w\in H^1(\Omega)^d
$$
and the explicitly skew-symmetric trilinear form  by 
$$
b^{\ast}(w,u,v):=\frac{1}{2}(w\cdot\nabla u,v)-\frac{1}{2}(w\cdot\nabla v,u)
\qquad\forall u,v,w\in H^1(\Omega)^d \, .
$$
The term $b^{\ast}$ satisfies the following bound
\begin{lemma}\label{lemma:trilinear}
There exists a constant $C_{b^{\ast}}>0$ only dependent on the domain $\Omega$ such that 
\begin{equation*}
b^{\ast}(w,u,v)\leq C_{b^{\ast}} \|\nabla w\|  \| \nabla u\|  \| \nabla
v \| \qquad\forall u, v, w \in X.
\end{equation*}
\end{lemma}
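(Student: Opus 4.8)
The plan is to estimate the two terms in the definition of $b^{\ast}$ individually, since
$$
|b^{\ast}(w,u,v)| \leq \tfrac{1}{2}|(w\cdot\nabla u,v)| + \tfrac{1}{2}|(w\cdot\nabla v,u)|,
$$
and each summand has the same structure: a velocity factor, a gradient factor, and a second velocity factor. For the first summand I would start from the pointwise bound $|(w\cdot\nabla u,v)| \leq \int_{\Omega}|w|\,|\nabla u|\,|v|\,dx$ and apply the generalized H\"older inequality with the exponent triple $(4,2,4)$, which gives
$$
|(w\cdot\nabla u,v)| \leq \|w\|_{L^{4}(\Omega)}\,\|\nabla u\|\,\|v\|_{L^{4}(\Omega)}.
$$
The key point is that the $L^{2}$ factor lands exactly on $\nabla u$, leaving the two $L^{4}$ factors to be absorbed.

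The next step is to convert the $L^{4}$ norms into $\|\nabla\cdot\|$ norms. Here I would invoke the Sobolev embedding $H^{1}(\Omega)\hookrightarrow L^{4}(\Omega)$, which holds with a constant depending only on $\Omega$ for $d=2,3$: in two dimensions $H^{1}$ embeds into every $L^{p}$ with $p<\infty$, and in three dimensions $H^{1}\hookrightarrow L^{6}$, so boundedness of $\Omega$ yields $L^{6}(\Omega)\hookrightarrow L^{4}(\Omega)$. Thus $\|w\|_{L^{4}(\Omega)} \leq C_{S}\|w\|_{H^{1}(\Omega)}$ and likewise for $v$. Since $w,v\in X = H^{1}_{0}(\Omega)^{d}$, the Poincar\'e inequality stated above gives $\|w\|_{H^{1}(\Omega)} \leq (1+C_{P}^{2})^{1/2}\|\nabla w\|$, and combining these yields
$$
|(w\cdot\nabla u,v)| \leq C\,\|\nabla w\|\,\|\nabla u\|\,\|\nabla v\|
$$
with $C$ depending only on $\Omega$.

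Finally, the second summand $(w\cdot\nabla v,u)$ is handled by the identical argument with the roles of $u$ and $v$ interchanged, producing the same bound. Averaging the two estimates gives the claim with $C_{b^{\ast}}$ absorbing the Sobolev and Poincar\'e constants. The only genuine subtlety --- the point I would flag as the crux --- is the dimension restriction hidden in the Sobolev step: the exponent triple $(4,2,4)$ is chosen precisely so that the $L^{4}$ embedding of $H^{1}$ is available, and this is exactly what forces $d\leq 3$ and makes the constant depend on $\Omega$ (through its embedding and Poincar\'e constants) but on nothing else. Everything else is routine.
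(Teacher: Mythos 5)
Your proof is correct, and it is the standard argument (generalized H\"older with exponents $(4,2,4)$, the Sobolev embedding $H^{1}\hookrightarrow L^{4}$ valid for $d\leq 3$, and Poincar\'e) underlying the result the paper simply cites as Lemma 6.11 of the reference [J16] rather than proving in-line. In effect you have supplied the details of the same proof the paper delegates to the literature, including the correct identification of the dimension restriction as the crux.
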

\begin{proof}
	See Lemma 6.11 of \cite{J16}.
\end{proof}
 We define the space of discretely divergence free functions as
\begin{equation}
V^{div}_{h}:= \{v_{h} \in X_{h} : (\nabla \cdot v_{h},q_{h}) = 0 \ \forall q_{h}\in Q_{h}\} \subset X.
\end{equation}
From Hilbert space theory, the function space $X_{h}$ can be decomposed into the orthogonal subspaces
\begin{equation}\label{ortho_subspaces}
X_{h} = V^{div}_{h} \oplus (V^{div}_{h})^{\perp},
\end{equation}
where the orthogonality is in the sense of the $H^{1}$ inner product.

Throughout the rest of this paper we assume that the solution to the NSE satisfies the following regularity assumptions:% \textcolor{red}{Kiera: Check these once you are done with the convergence proof.}
\\
\begin{assumption}\label{assumption:regularity} In \eqref{wfwf} we assume that $u$, $p,$ and $f$ satisfy:
	\begin{equation*}
	\begin{aligned}
	&u \in L^{\infty}(0,T,X \cap H^{s+1}(\Omega)),  u_{t} \in L^{2}(0,T,H^{s+1}(\Omega)), u_{tt} \in L^{2}(0,T,H^{s+1}(\Omega)),
	\\
	&f \in L^{2}(0,T,L^{2}(\Omega)), p \in L^{2}(0,T,Q \cap H^{k}(\Omega)).
	\end{aligned}
	\end{equation*}
\end{assumption}

The calculation of snapshots to construct the ROM in the ensuing sections is done using the $P^{2}-P^{1}$ Taylor-Hood finite element pair along with a backward Euler time discretization. Specifically, given $u^{0}_h, \in X_h$  for $n=1,2,\ldots,N-1$, we find $u^{n+1}_h\in X_h$ and $p_h^{n+1}\in Q_h$ satisfying

\begin{equation}\label{eqn:BE_FEM}
\begin{aligned}
&\Big(\frac{u^{n+1}_h - u^{n}_h}{\Delta t}, v_h \Big) + b^{\ast}(u_h^{n} , u^{n+1}_h ,v_h) +\nu (\nabla u^{n+1}_h, \nabla v_h) \\
&- (p^{n+1}_h , \nabla \cdot v_h)  =( f^{n+1}, v_h) \quad \quad \qquad \forall v_h\in X_h\\
&(\nabla \cdot u_h^{n+1}, q_h )= 0 \qquad \qquad  \qquad \qquad \ \ \ \forall  q_h\in Q_h.
\end{aligned}
\end{equation}

It has been shown in Theorem 7.78 of \cite{J16}, using Taylor-Hood elements and under the regularity conditions given in Assumption \ref{assumption:regularity}, \eqref{eqn:BE_FEM} will satisfy the following error estimate
\begin{equation}
\|u(t^{N}) - u_{h}^{N}\|^{2} + \nu \Delta t \sum_{n=1}^{N} \|\nabla u(t^{n}) - u_{h}^{n} \|^{2} \leq C(\nu) \left(h^{2s}(1 + \nu^{-1}\|p\|_{\infty,k}^{2}) + \Delta t^{2} \right),
\end{equation}
with $C$ independent of $h$, $p$, and $\Delta t$.
%In order to calculate snapshot to construct our ROM we use a similar setting to that used in \cite{MRXI17}. We will utilize the BDF2 time-stepping scheme along with a $P^{2}-P^{1}$ Taylor-Hood spatial discretization. Discretizing the time interval $[0,T]$ into $N$ separate intervals such that $\Delta t = \frac{T}{N}$ and $t_{n} = n \Delta t$ for $n = 0, \ldots, N$. Then, given $u^{0}_h, u^{1}_{h} \in X_h$, $p^{0}_h \in Q_h$  for $n=1,2,\ldots,N-1$, we find $u^{n+1}_h\in X_h$ and $p_h^{n+1}\in Q_h$ satisfying
%\begin{equation}\label{eqn:BDF2_FEM}
%\begin{aligned}
%&\Big(\frac{3u^{n+1}_h - 4u^{n}_h + u_{h}^{n-1}}{2\Delta t}, v_h \Big) + b^{\ast}(u_h^{n+1} , u^{n+1}_h ,v_h) +\nu (\nabla u^{n+1}_h, \nabla v_h) \\
%&- (p^{n+1}_h , \nabla \cdot v_h)  =( f^{n+1}, v_h) \quad \quad \qquad \forall v_h\in X_h\\
%&(\nabla \cdot u_h^{n+1}, q_h )= 0 \qquad \qquad  \qquad \qquad \ \ \ \forall  q_h\in Q_h.
%\end{aligned}
%\end{equation}

%\noindent It can be shown under the regularity assumptions in Assumption \ref{assumption:regularity} that this method satisfies the error inequality \cite{a}
%\begin{equation}
%\|u(t^{N}) - u_{h}^{N}\|^{2} + \nu \Delta t \sum_{n=1}^{N} \|\nabla u(t^{n}) - u_{h}^{n} \|^{2} \leq C \left(h^{2s}(1 + \nu^{-1}\|p\|_{\infty,s}^{2}) + \Delta t^{4} \right).
%\end{equation}

\section{Proper Orthogonal Decomposition Preliminaries}
\label{sec:POD_sec}
In this section, we briefly describe the POD method. We will closely follow the notation  and presentation in \cite{DILMS19}. A more detailed description of this method can be found in \cite{KV01}.

We discretize the time interval $[0,T]$ into $N$ separate intervals such that $\Delta t = \frac{T}{N}$ and $t_{n} = n \Delta t$ for $n = 0, \ldots, N$.  We will denote by $u_{h}^{n}(x)\in X_h$, $p_{h}^{n}(x)\in Q_h$, $n=0,\ldots,N$, the finite element solution to \eqref{eqn:BE_FEM} evaluated at $t=t_n$, $n=1,\ldots,N$. 

Letting ${u}_S^{n}$ and ${p}_S^{n}$ be the vector of coefficients corresponding to the finite element functions $u_{h}^{n}(x)$ and $p_{h}^{n}(x)$, we define the velocity snapshot matrix $\mathbb{V}$ and pressure snapshot matrix $\mathbb{P}$ as
\begin{equation*}
\begin{aligned}
\mathbb{V} = \big({u}_S^{0},{u}_S^{1}, \ldots , {u}_S^{N})\ \ \text{and} \ \ \mathbb{P} = \big({p}_S^{0},{p}_S^{1}, \ldots , {p}_S^{N}).
\end{aligned}
\end{equation*}
%\mathbb{A} = \big({u}_S^{1},{u}_S^{2}, \ldots , {u}_S^{N_{v}}) 
%$$
%$$
%
%$$
We consider the set of finite element velocity $\{u^{n}_{h,S}\}_{n=0}^{N}$ and pressure $\{p^{n}_{h,S}\}_{n=0}^{N}$ functions   corresponding to the velocity and pressure snapshots.
Defining the velocity and pressure spaces spanned by these functions as
\begin{equation*}
X_{h,S} :=\text{span}\{u^{n}_{h,S}\}_{n=0}^{N}  \subset X_h \ \ \text{and} \ \
Q_{h,S} :=\text{span}\{p^{n}_{h,S}\}_{n=0}^{N}  \subset Q_h,
\end{equation*}
the POD method then seeks a low-dimensional representation of these spaces. Denoting by $\{{\varphi_i(x)}\}_{i=1}^r$ the velocity POD basis and $\{{\psi_i(x)}\}_{i=1}^m$ the pressure POD basis we define the reduced velocity and pressure spaces as
\begin{equation*}
X_r :=\text{span}\{{\varphi}_i\}_{i=1}^r \subset X_{h,S} \subset X_h \ \ \text{and} \ \
Q_m :=\text{span}\{{\psi}_i\}_{i=1}^m  \subset P_{h,S} \subset Q_h.
\end{equation*}
We let $\delta_{ij}$ denote the Kronecker delta and $\mathcal{H}_{V}$ and $\mathcal{H}_{P}$ a Hilbert space for the velocity and pressure space, respectively. The POD method determines these bases by solving the constrained minimization problems: find $\{{\varphi}_i\}_{i=1}^r$ and $\{{\psi}_i\}_{i=1}^m$ satisfying
\begin{equation}\label{Min-velocity}
\begin{aligned}
\frac{1}{N+1}\min  \sum_{n=0}^{N} \Big \|  u_{h}^{n}-\sum_{j=1}^r (u_{h}^{n}, \varphi_j)_{\mathcal{H}_{V}}\varphi_j\Big \|_{\mathcal{H}_{V}}^2 \\
\text{subject to } (\varphi_i, \varphi_j)_{\mathcal{H}_{V}}= \delta_{ij}\quad\mbox{for $i,j=1,\ldots,r$},
\end{aligned}
\end{equation}
and 
\begin{equation}\label{Min-pressure}
\begin{aligned}
\frac{1}{N+1} \min  \sum_{n=0}^{N_{}} \Big \|  p_{h}^{n}-\sum_{j=1}^m (p_{h}^{n}, \psi_j)_{\mathcal{H_{P}}}\psi_j\Big \|_{\mathcal{H}_{P}} ^2 \\
\text{subject to } (\psi_i, \psi_j)_{\mathcal{H}_{P}}= \delta_{ij}\quad\mbox{for $i,j=1,\ldots,m$}.
\end{aligned}
\end{equation}
%\noindent Common choices for the Hilbert spaces $\mathcal{H}_{V}$ and $\mathcal{H}_{P}$ are $H_{0}^{1}$ and $L^{2}$. We will see in the ensuing sections that the choice of Hilbert space will have an impact in terms of the convergence and stability of the ROM scheme.  
Defining the velocity and pressure correlation matrices $\mathbb{C}_{V} = \frac{1}{N+1}(u^{n}_{S},u^{k}_{S})_{\mathcal{H}_{V}}$ and $\mathbb{C}_{P} = \frac{1}{N+1}(p^{n}_{S},p^{k}_{S})_{\mathcal{H}_{P}}$ for $n,k = 0, \ldots N,$ these problems can then be solved by considering the eigenvalue problems
\begin{equation*}
\mathbb{C}_{V}\vec{a}_{i} = \lambda_{i}\vec{a}_{i},
\end{equation*}
and
\begin{equation*}
\mathbb{C}_{P}\vec{b}_{i} = \sigma_{i}\vec{b}_{i}.
\end{equation*}
The eigenvalues for ${C}_{V}$, $\lambda_{1} \geq \lambda_{N_V} > 0$, and ${C}_{P}$, $\sigma_{1} \geq \sigma_{N_P}  > 0$, are sorted in descending order. Here, $N_{V}$ and $N_{P}$ are the rank of $\mathbb{V}$ and $\mathbb{P},$ respectively. It follows that the finite element basis coefficients corresponding to the POD basis functions will be given by
\begin{equation*}
\vec\varphi_i = \frac{1}{\sqrt{\lambda_i}}\mathbb{C}_{V}\vec{a}_{i}, \ \ \ i = 1, \ldots, r,
\end{equation*}
and
\begin{equation*}
\vec\psi_i = \frac{1}{\sqrt{\sigma_i}}\mathbb{C}_{P}\vec{b}_{i}, \ \ \ i = 1, \ldots, m.
\end{equation*}

Throughout the rest of this paper, we will assume that $\mhv = L^{2}$ and $\mhp = L^{2}$.  POD error analysis has been conducted for $\mhv = H_{0}^{1}$ in the semidiscrete setting for the NSE in \cite{KV02}. Analysis and numerical tests comparing the different POD bases was conducted in the semidiscrete setting for the heat equation using a variety of different error norms in \cite{IW142} and for the NSE in \cite{S14}.  We note that results in this paper could be extended to the case where  $\mhv = H_{0}^{1}$ and $\mhp = H^{1}$, but do not do so here for clarity of presentation. A rigorous comparison between the $L^{2}$ and $H^{1}$ POD basis in the fully discrete setting for the velocity approximation and the pressure recovery techniques explored in this paper is a subject of ongoing research.

Using the velocity POD basis $\{\varphi\}_{i=1}^{r}$ we will construct the {BE-ROM} scheme. We seek a solution in $X_{r}$ using the POD basis $\{\varphi_{i}\}_{i=1}^{r}$ as opposed to the finite element basis as done in \eqref{eqn:BE_FEM}. The {BE-ROM} scheme can be written as:
\begin{equation}\label{eqn:BE_ROM}
\Big(\frac{u^{n+1}_r - u_{r}^{n}}{\Delta t}, \varphi \Big) + b^{\ast}(u_r^{n} , u^{n+1}_r ,\varphi) +\nu (\nabla u^{n+1}_r, \nabla \varphi)  =( f^{n+1}, \varphi) \ \  \forall \varphi \in X_r.
\end{equation}
The terms involving the pressure have dropped out of \eqref{eqn:BE_ROM} due to the fact that $X_{r} \subset V^{div}_{h}$, yielding a velocity only ROM.

\section{Pressure Recovery Formulations}\label{sec:pressure_recovery}
It was show in the derivation of {BE-ROM} \eqref{eqn:BE_ROM}, due to the fact that $X_{r} \subset V^{div}_{h}$, the pressure term drops out of the formulation yielding a velocity-only ROM. In this section, we review two ways in which the pressure can be recovered from the velocity solution $u_{r}^{n+1}$.

\subsection{Momentum Equation Recovery}
The MER approach for recovering the pressure involves just the weak form of the momentum equation, i.e., given the ROM solution $u^{n}_{r}$, $u_{r}^{n+1},$ determined by \eqref{eqn:BE_ROM}, find $p^{n+1}_{m} \in Q_{m}$ satisfying
\begin{equation}\label{eqn:mom-temp}
\begin{aligned}
(p^{n+1}_m , \nabla \cdot s)  &= -( f^{n+1}, s) +\Big(\frac{u^{n+1}_r - u^{n}_r}{\Delta t}, s \Big) + b^{\ast}(u_r^{n} , u^{n+1}_r ,s)
\\
  &+\nu (\nabla u^{n+1}_r, \nabla s) \ \ \ \quad \quad \qquad \qquad \forall s\in S \subset  (V^{div}_{h})^{\perp}.\\
\end{aligned}
\end{equation}
 This method was studied in the ROM setting for the steady NSE in \cite{FBKR19}. An important consideration is that the test space $S$ must be determined such that it is inf-sup stable with respect to the pressure space $Q_{m}$. To do so, we follow the same approach from \cite{FBKR19}, and use the supremizer stabilization method developed in \cite{BMQR15,RV07}.
 \\
\begin{remark}
	Due to the fact that $u^{n+1}_{r} \in V_{h}^{div}$ and $s\in (V^{div}_{h})^{\perp}$, we have $\nu (\nabla u^{n+1}_r, \nabla s) = 0$ in \eqref{eqn:mom-temp}.
\end{remark}
%\subset (V_{h}^{div})^{\perp}$

\subsubsection{Supremizer Stabilization and weak formulation}
 We consider the discrete inf-sup condition \eqref{eqn:inf-supFE} replacing the pressure finite element space with the ROM space $Q_{m}$  
\begin{equation}\label{eqn:inf-supROM1}
\inf_{\psi \in Q_{m} \backslash \{0\} }  \sup_{v_{h} \in X_{h} \backslash \{0\}} \frac{(\nabla \cdot v_{h},\psi)}{\| \nabla v_{h} \| \|\psi\|}.
\end{equation}
Given a function $p_{m} \in Q_{m}$, its supremizer will be the velocity function $s_{h} \in X_{h}$ that realizes the inf-sup condition in \eqref{eqn:inf-supROM1}. This can be interpreted as the Reisz representation in $X_{h}$ of the linear functional $(\nabla \cdot, p_{m})$, i.e., the solution of find $s_{h} \in X_{h}$ such that
\begin{equation}\label{eqn:supremizer}
(\nabla s_{h}, \nabla v_{h}) =  - (\nabla \cdot v_{h},p_{m}) \ \ \ \forall v_{h} \in X_{h}.
\end{equation}
The supremizer enrichment algorithm consists of solving \eqref{eqn:supremizer} for each basis function $\{\psi_{i}\}_{i=1}^{m}$. Then, applying a Gram-Schmidt orthonormalization procedure to the set of solutions yields a set of basis functions $\{\zeta_{i}\}_{i=1}^{m}$. Letting
\begin{equation}
S_m :=\text{span}\{{\zeta}_i\}_{i=1}^m  \subset (V^{div}_{h})^{\perp} \subset X_h , 
\end{equation}
the following inf-sup stability condition holds for the spaces $S_{m}$ and $Q_{m}$.

\begin{lemma}\label{lemma:inf-sup}
	Let $\beta_{h} >0$ be the inf-sup constant for the finite element basis in \eqref{eqn:inf-supFE}. The spaces $S_{m}$ and $Q_{m}$ will then be inf-sup stable with a constant $\beta_{m} \geq \beta_{h}$, i.e.,
	\begin{equation}\label{eqn:inf-supROM2}
	\beta_{m} = \inf_{\psi \in Q_{m} \backslash \{0\} }  \sup_{\zeta \in S_{m} \backslash \{0\}} \frac{(\nabla \cdot \zeta ,\psi)}{\| \nabla \zeta \| \|\psi\|} \geq \beta_{h}.
	\end{equation}
\end{lemma}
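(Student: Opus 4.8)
The plan is to exploit the fact that the supremizer map defined by \eqref{eqn:supremizer} is linear, so that the supremizer of an \emph{arbitrary} $\psi \in Q_m$ automatically lies in $S_m$, and then to show that this particular supremizer already attains the finite element inf-sup bound $\beta_{h}$. In other words, rather than searching over all of $S_m$ for a good test function, I would simply produce one explicit competitor, namely the supremizer of $\psi$ itself, and estimate the Rayleigh quotient there.

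First I would observe that the map $T : Q_m \to X_h$ sending $p_m$ to its supremizer $s_{h}$ solving \eqref{eqn:supremizer} is linear. Writing any $\psi \in Q_m$ as $\psi = \sum_{i=1}^{m} c_i \psi_i$, linearity gives $T\psi = \sum_{i=1}^{m} c_i T\psi_i$, which lies in the span of the individual supremizers $\{T\psi_i\}_{i=1}^{m}$. Since Gram--Schmidt orthonormalization does not change the span, $\mathrm{span}\{T\psi_i\}_{i=1}^{m} = \mathrm{span}\{\zeta_i\}_{i=1}^{m} = S_m$, and hence $T\psi \in S_m$ for every $\psi \in Q_m$. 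Next, fixing $\psi \in Q_m \setminus \{0\}$, I would use $\zeta = T\psi$ as a concrete competitor in the supremum over $S_m$: testing \eqref{eqn:supremizer} with $v_h = T\psi$ yields the energy identity $\| \nabla T\psi \|^{2} = -(\nabla \cdot T\psi, \psi)$, so that (up to a harmless sign, replacing $\zeta$ by $-\zeta \in S_m$) the Rayleigh quotient evaluated at $T\psi$ equals $\| \nabla T\psi \| / \|\psi\|$.

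The crux is then to recognize that the supremizer exactly realizes the finite element supremum. Because \eqref{eqn:supremizer} identifies $T\psi$ as the $H^{1}$ Riesz representer of the functional $v_h \mapsto -(\nabla \cdot v_h, \psi)$ on the \emph{full} space $X_h$, one has $\sup_{v_h \in X_h \setminus \{0\}} (\nabla \cdot v_h, \psi)/\| \nabla v_h \| = \| \nabla T\psi \|$. Since $Q_m \subset Q_h$, the finite element inf-sup condition \eqref{eqn:inf-supFE} applies to $\psi$ and gives $\| \nabla T\psi \| / \|\psi\| \geq \beta_{h}$. Combining this with the previous step shows that $\sup_{\zeta \in S_m \setminus \{0\}} (\nabla \cdot \zeta, \psi)/(\| \nabla \zeta \|\,\|\psi\|) \geq \beta_{h}$, and taking the infimum over $\psi \in Q_m \setminus \{0\}$ yields $\beta_{m} \geq \beta_{h}$.

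The main obstacle is conceptual rather than computational: the entire argument hinges on the observation that the supremizer of any $\psi \in Q_m$ remains inside $S_m$ (a consequence of linearity together with span-preservation under Gram--Schmidt) and that this supremizer is constructed using test functions from the full finite element space $X_h$ rather than from a reduced space. It is precisely this use of $X_h$ in \eqref{eqn:supremizer} that lets the reduced pair $(S_m, Q_m)$ inherit the finite element constant $\beta_{h}$ with no degradation, giving the sharp statement $\beta_{m} \geq \beta_{h}$ rather than merely $\beta_{m} > 0$.
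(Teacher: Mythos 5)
Your proposal is correct and takes essentially the same route as the paper, whose ``proof'' is simply a citation to Section 4 of \cite{BMQR15}: there, exactly your argument is used --- linearity of the supremizer operator $T$ guarantees $T\psi \in S_m$ for every $\psi \in Q_m$, and the Riesz-representer identity $\|\nabla T\psi\| = \sup_{v_h \in X_h \setminus \{0\}} (\nabla \cdot v_h, \psi)/\|\nabla v_h\| \geq \beta_h \|\psi\|$ then transfers the finite element inf-sup constant to the pair $(S_m, Q_m)$ without degradation. Your handling of the sign (replacing $\zeta$ by $-\zeta$) and of span preservation under Gram--Schmidt is also sound.
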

\begin{proof}
	See section 4 of \cite{BMQR15}.
\end{proof}
%\subsubsection{Weak Formulation of Momentum Equation Recovery}
%Introducing a new space $S_{M} \subset V^{div, \perp}_{R}$ we define for $\zeta \in S_{M}$ 
%\begin{equation*}
%\Lambda(u_{R}^{n+1},\zeta) = \Big(\frac{3u^{n+1}_R - 4u^{n}_R + u_{R}^{n-1}}{2\Delta t}, \zeta \Big) + b^{\ast}(u_R^{n+1} , u^{n+1}_R ,\zeta) +\nu (\nabla u^{n+1}_R, \nabla \zeta)  + ( f^{n+1}, \zeta)
%\end{equation*}

Using the space $S_{m}$ in \eqref{eqn:mom-temp} the MER formulation is then given by: find $p_{m}^{n+1} \in Q_{m}$ satisfying
\begin{equation}\label{eqn:posteriori-momentum}
\begin{aligned}
(p_{m}^{n+1}, \nabla \cdot \zeta) =  &\Big(\frac{u^{n+1}_r - u^{n}_r}{\Delta t}, \zeta \Big) + b^{\ast}(u_r^{n} , u^{n+1}_r,\zeta) 
\\
& - ( f^{n+1}, \zeta)  \ \  \forall \zeta \in S_m.
\end{aligned}
\end{equation} 
It can be shown (see section 4 of \cite{FBKR19}) that solving \eqref{eqn:BE_ROM} followed by \eqref{eqn:posteriori-momentum} is equivalent to the coupled system generated by discretizing \eqref{eqn:BE_FEM} with the combined velocity basis $X_{r} \bigoplus S_{m}$ and pressure space $Q_{m}$. The disadvantage to this approach is that it results in needing to solve a system of size $r + 2m$ instead of separate ones of size $r$ and $m$.
\\
\begin{remark}
	We note that the computational cost of this approach is comparable to other methods used for pressure recovery in the time-dependent setting. In \cite{BMQR15}, the authors considered the steady NSE in a parameterized domain.  This resulted in the inf-sup constant \eqref{eqn:inf-supROM2} to be parameter dependent. Therefore, each time a different parameter was sampled, the supremizer stabilization algorithm needed to be rerun for \eqref{eqn:inf-supROM2} to be satisfied. Because of the large computational cost, the authors proposed an approximate supremizer algorithm that did not rigorously satisfy \eqref{eqn:inf-supROM2}. We stress for the problem setting studied in this paper the inf-sup constant will not be parameter dependent. Therefore the supremizer stabilization algorithm only needs to be run once in the offline stage. This cost is negligible compared to the cost of generating the snapshot matrices $\mathbb{V}$  and $\mathbb{P}$ in the offline phase.
\end{remark}
%In order for \eqref{eqn:posteriori-momentum} to be solvable the space $S_{M}$ must be inf-sup stable with respect to $Q_{M}$. In this paper we will construct $S_{M}$ via the supremizer stabilization approach developed in \cite{BMQR15}. %We note that the velocity space $X_{R}$ is not guaranteed to be inf-sup stable even if the finite element basis functions used in its construction were. Additionally, since $X_{R} \subset V_{R}^{div}$ it will be in the null space of $\Lambda(u_{R}^{n+1},\cdot)$. 

\subsection{Pressure Poisson}\label{sec:PPE}

In the ROM literature, the most frequently used technique for recovering the pressure is the PPE. The PPE has been studied in the continuous, finite difference, and finite element settings \cite{GM87,JL04,SSPG06}. In the ROM setting, numerical studies have been performed in \cite{CIJS14,NPM05}. 
In this section, we rederive the PPE and its corresponding weak formulation. We follow the approach used in \cite{JL04}. 

\subsubsection{Pressure Poisson Formulation}
Taking the divergence of the momentum equation in \eqref{eqn:nse-1}, assuming sufficient regularity, and using $\nabla \cdot u = 0$ gives
\begin{equation}\label{eqn:PPE-strongI}
\Delta p = - \nabla \cdot (u \cdot \nabla u) + \nabla \cdot f.
\end{equation} 
For this equation to be equivalent to the NSE, we need to impose additional constraints on \eqref{eqn:PPE-strongI}. Some possibilities include the enforcement of a no-slip boundary for the divergence of the velocity, retaining the term $\Delta (\nabla \cdot u)$ in \eqref{eqn:PPE-strongI}, or incorporating a Neumann boundary condition into \eqref{eqn:PPE-strongI}. Full details on these different approaches can be found in \cite{GM87,JL04,SSPG06}. 

We will consider the most common approach used in the ROM setting, adding a Neumann boundary condition to \eqref{eqn:PPE-strongI}. To this end, we take the normal component of the momentum equation along the boundary $\Gamma$. Using the vector identity 
\begin{equation*}
\Delta u = -\nabla \times \nabla \times u + \nabla(\nabla \cdot u)       
\end{equation*}
along with $\nabla \cdot u = 0$ gives
\begin{equation}
\frac{\partial p}{\partial n}\biggr|_{\Gamma} = \bigg [-\nu \cdot (\nabla \times \nabla \times u) + n \cdot f \bigg]\biggr|_{\Gamma},
\end{equation}
where $n$ is the unit normal along $\Gamma$. Equipping \eqref{eqn:PPE-strongI} with this boundary condition then gives the full PPE
\begin{subequations}\label{eqn:PPE-strongII}
\begin{align}
%\begin{aligned}
&\Delta p = - \nabla \cdot (u \cdot \nabla u) + \nabla \cdot f \label{eqn:PPE-strongIIa}
\\
&\frac{\partial p}{\partial n}\biggr|_{\Gamma} = \bigg [-\nu \cdot (\nabla \times \nabla \times u) + n \cdot f \bigg]\biggr|_{\Gamma}. \label{eqn:PPE-strongIIb}
%\end{aligned}
\end{align}
\end{subequations}
Putting this into a weak formulation, we multiply \eqref{eqn:PPE-strongIIa} by a test function $q$. 
Integrating the left hand side and right hand side of \eqref{eqn:PPE-strongIIa} by parts and applying the vector identity
\begin{equation}
\int_{\Gamma} n\cdot (\nabla \times \nabla \times u) q = - \int_{\Gamma} (\nabla \times u) \cdot (n \times \nabla q)
\end{equation}
gives the weak form of the PPE
\begin{equation}\label{eqn:PPE-weak}
(\nabla p, \nabla q) = - (u \cdot \nabla u,\nabla q) + (f,\nabla q) + \nu \int_{\Gamma} (\nabla \times u) \cdot (n \times \nabla q).
\end{equation}

Equation \eqref{eqn:PPE-weak} can then be discretized using the pressure POD basis along with the discrete velocity solution to recover the pressure at each time step. Specifically, given the ROM velocity solution $u_{r}^{n+1},$ we find $p_{m}^{n+1} \in Q_{m}$ satisfying

\begin{equation}
\begin{aligned}\label{eqn:D-PPE}
(\nabla p^{n+1}_{m},\nabla \psi) &=  -\left( u^{n+1}_{r} \cdot \nabla u_{r}^{n+1},\nabla \psi \right) + (f^{n+1}, \nabla \psi)  
\\
&+ \nu \int_{\Gamma} (\nabla \times u^{n+1}_{r}) \cdot (n \times \nabla \psi) \qquad \qquad \qquad \forall \psi \in Q_{m}.
\end{aligned} 
\end{equation}

\begin{remark}\label{remark:ppe}
For the boundary term appearing in \eqref{eqn:D-PPE} to be well posed, this will require either that $(\nabla \times u^{n+1}_{r}))|_{\Gamma} \in H^{1/2}(\Gamma)$ and $(n \times \nabla \psi) \in H^{-1/2}(\Gamma)$ or that $(\nabla \times u^{n+1}_{r}))|_{\Gamma} \in L^{2}(\Gamma)$ and $(n \times \nabla \psi) \in L^{2}(\Gamma)$.  The first of these conditions will be satisfied if $u_{r}^{n+1} \in H^{2}$ and $\psi \in H^{1}$. Since $u^{n+1}_{r} \in X_{r} \subset X_{h}$ and $\psi \in Q_{m} \subset Q_{h},$ this will not hold when  a $C^{0}$ finite element space is used in the offline phase.  The second condition, however, will be true for $C^{0}$ finite elements. Since $u^{n+1}_{r}$ and $\psi$ will be piecewise polynomials on the boundary, they will be in $L^{2}(\Gamma)$. 

Even though this term will be well defined, it will present difficulties in terms of the  theoretical analysis. In order to obtain stability and error estimates the terms involving the boundary need to be bounded in terms of the domain $\Omega$. A standard finite element approach would be to use a trace inequality (see \cite{BS02}) on these terms. However, due to the lack of regularity of these terms, it is not possible to do so here. To our knowledge, the analysis of this equation, even in the finite element setting, is an open problem.  
\end{remark}
%Therefore for the error analysis in Section \ref{sec:PPE-Analysis} we will instead consider the formulation of \eqref{eqn:D-PPE} with the boundary term dropped, i.e.,
%
%\begin{equation}
%\begin{aligned}\label{eqn:D-PPE-analysis}
%(\nabla p^{n+1}_{m},\nabla \psi) &=  -\left( u^{n+1}_{r} \cdot \nabla u_{r}^{n+1},\nabla \psi \right) + (f^{n+1}, \nabla \psi) \qquad  \forall \psi \in Q_{m}.
%\end{aligned} 
%\end{equation}
%Dropping the boundary term is equivalent to enforcing the Neumann boundary condition 
%\begin{equation*}
%\frac{\partial p}{\partial n}\biggr|_{\Gamma} = 0.
%\end{equation*} 
%This will cause a numerical boundary layer to form resulting in a loss of accuracy. We note that the analysis we perform on \eqref{eqn:D-PPE-analysis} still provides useful intuition on the behavior of \eqref{eqn:D-PPE} and how it performs compared to MER formulation.

\section{Error Analysis}\label{sec:error_analysis}
In this section, we conduct an error analysis for the pressure determined by the MER formulation, \eqref{eqn:posteriori-momentum}. We begin by stating preliminary results and establishing notation.

The following stability result for {BE-ROM}, \eqref{eqn:BE_ROM}, holds.
\begin{lemma}\label{lemma:BE-ROM-stability}
	Consider the method \eqref{eqn:BE_ROM}. Let
	\begin{equation*}
	C_{stab} := \|u^{0}_{r}\|^{2} + \nu^{-1}\Delta t\sum_{n=0}^{N'}\|f^{n+1}\|_{-1}^{2},
	\end{equation*}
	 then for any $1 < N' \leq N $
	\begin{equation}\label{energy_inequality}
	\|u_{r}^{N'}\|^{2} + \nu \Delta t \sum_{n=0}^{N'}\|\nabla u_{r}^{n+1}\|^{2} \leq C_{stab}.
	\end{equation}
\end{lemma}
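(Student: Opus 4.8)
The plan is to run the standard discrete energy argument for backward Euler applied to a Galerkin subspace, here $X_r$. First I would test \eqref{eqn:BE_ROM} with $\varphi = u_r^{n+1}$. The decisive structural fact is that the explicitly skew-symmetric trilinear form annihilates its diagonal argument, $b^{\ast}(u_r^{n}, u_r^{n+1}, u_r^{n+1}) = 0$, which follows immediately from the definition of $b^{\ast}$. This eliminates the nonlinear term entirely, which is precisely the reason the skew-symmetric form is used in \eqref{eqn:BE_FEM} and \eqref{eqn:BE_ROM}.

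Next I would handle the remaining three terms. For the discrete time-derivative term, the polarization identity $(a-b,a) = \tfrac{1}{2}(\|a\|^2 - \|b\|^2 + \|a-b\|^2)$ gives
\[
\Big(\frac{u^{n+1}_r - u^{n}_r}{\Delta t}, u_r^{n+1}\Big) = \frac{1}{2\Delta t}\Big(\|u_r^{n+1}\|^2 - \|u_r^{n}\|^2 + \|u_r^{n+1} - u_r^{n}\|^2\Big).
\]
The viscous term is exactly $\nu\|\nabla u_r^{n+1}\|^2$, and for the forcing I would use the definition of the $H^{-1}$ norm followed by Young's inequality,
\[
(f^{n+1}, u_r^{n+1}) \leq \|f^{n+1}\|_{-1}\|\nabla u_r^{n+1}\| \leq \frac{1}{2\nu}\|f^{n+1}\|_{-1}^2 + \frac{\nu}{2}\|\nabla u_r^{n+1}\|^2,
\]
so that the gradient piece is absorbed into half of the dissipation.

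Combining these and multiplying through by $2\Delta t$ yields the per-step inequality
\[
\|u_r^{n+1}\|^2 - \|u_r^{n}\|^2 + \|u_r^{n+1} - u_r^{n}\|^2 + \nu\Delta t\|\nabla u_r^{n+1}\|^2 \leq \nu^{-1}\Delta t\,\|f^{n+1}\|_{-1}^2.
\]
Finally I would discard the nonnegative term $\|u_r^{n+1} - u_r^{n}\|^2$ and sum over $n$; the velocity $L^2$ norms telescope, leaving the final-time norm minus $\|u_r^0\|^2$ on the left, which rearranges into \eqref{energy_inequality}.

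As for obstacles, there is no deep difficulty: this is the textbook unconditional stability bound, and every ingredient is either linear, comes from the explicit skew-symmetry, or is a single application of Young's inequality. The only points requiring care are the bookkeeping of the factor $\tfrac{1}{2}$ from polarization against the $\tfrac{1}{2}$ from Young's inequality, so that exactly half the dissipation survives, and the summation-index convention for the left-hand velocity norm produced by the telescoping (the sum naturally terminates at the top velocity index). Since $X_r \subset X$, the dual-norm bound for $f$ is available verbatim, so the estimate transfers from the finite element setting to the ROM setting without modification.
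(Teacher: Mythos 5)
Your proposal is correct and follows exactly the route the paper sketches: testing \eqref{eqn:BE_ROM} with $\varphi = u_r^{n+1}$, using skew-symmetry of $b^{\ast}$ to kill the nonlinear term, the polarization identity for the time-difference term, the $H^{-1}$ duality bound with Young's inequality for the forcing, and then telescoping the sum. The per-step inequality and the bookkeeping of the dissipation factor are handled properly, so this fills in the paper's one-line proof with no gaps.
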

\begin{proof}
	The results follows by letting $\varphi = u_{r}^{n+1}$ and using Cauchy-Schwarz, skew-symmetry of $b^{\ast}$, Young's inequality, and a polarization identity. 
\end{proof}

\begin{definition}
	{Let $C$ be a constant which may depend on $f,u,p,C_{b^*},\nu, C_{stab}$, but is independent of $h, \Delta t, r,m, , \lambda_{i},\sigma_{i}$.} 
\end{definition} 

The POD mass and stiffness matrices of the velocity space are defined as 
\begin{equation*}
\mathbb{M}_{r} = (\varphi_{i}, \varphi_{j})_{L^{2}}, \ \ \ \mathbb{S}_{r} = (\nabla \varphi_{i}, \nabla \varphi_{j})_{L^{2}}.
\end{equation*}
The following POD inverse estimate then holds: 
\begin{lemma}\label{lemma:POD-inveq}
	For all $\varphi \in X_{r}$ and $\psi \in Q_{m}$ it holds
	\begin{equation*}
	\|\nabla \varphi \| \leq |||\mathbb{S}_{r}|||_{2}^{1/2}\|\varphi\|.
	\end{equation*}
\end{lemma}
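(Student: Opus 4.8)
The plan is to reduce this functional inequality to a finite-dimensional Rayleigh-quotient bound on the coefficient vector. Because we have assumed $\mathcal{H}_{V} = L^{2}$, the velocity POD basis $\{\varphi_{i}\}_{i=1}^{r}$ is $L^{2}$-orthonormal, and hence the mass matrix $\mathbb{M}_{r} = (\varphi_{i},\varphi_{j})_{L^{2}}$ is the identity. First I would expand an arbitrary $\varphi \in X_{r}$ in the POD basis as $\varphi = \sum_{i=1}^{r} a_{i}\varphi_{i}$, with coefficient vector $\vec{a} = (a_{1},\dots,a_{r})^{T} \in \mathbb{R}^{r}$.

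Using orthonormality, the $L^{2}$ norm collapses to the Euclidean norm of the coefficients: $\|\varphi\|^{2} = \sum_{i,j} a_{i}a_{j}(\varphi_{i},\varphi_{j})_{L^{2}} = \sum_{i} a_{i}^{2} = |\vec{a}|^{2}$. Expanding the gradient in the same way gives $\|\nabla\varphi\|^{2} = \sum_{i,j} a_{i}a_{j}(\nabla\varphi_{i},\nabla\varphi_{j})_{L^{2}} = \vec{a}^{T}\mathbb{S}_{r}\vec{a}$, which is a quadratic form in the symmetric positive semidefinite Gram matrix $\mathbb{S}_{r}$.

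The final step bounds this quadratic form by the largest eigenvalue of $\mathbb{S}_{r}$: since $\mathbb{S}_{r}$ is symmetric positive semidefinite, $\vec{a}^{T}\mathbb{S}_{r}\vec{a} \leq \lambda_{\max}(\mathbb{S}_{r})|\vec{a}|^{2} = |||\mathbb{S}_{r}|||_{2}\,|\vec{a}|^{2}$, where $|||\cdot|||_{2}$ denotes the spectral norm. Chaining the three relations yields $\|\nabla\varphi\|^{2} \leq |||\mathbb{S}_{r}|||_{2}\,\|\varphi\|^{2}$, and taking square roots gives the stated estimate.

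There is no genuine obstacle here; the content is simply that the spectral norm of $\mathbb{S}_{r}$ controls the ratio of the $H^{1}$ seminorm to the $L^{2}$ norm over the finite-dimensional space $X_{r}$. The only point meriting care is the identification $\mathbb{M}_{r} = I$, which relies on the choice $\mathcal{H}_{V} = L^{2}$; under a different inner product (for instance $\mathcal{H}_{V} = H_{0}^{1}$) the expansion would produce a generalized Rayleigh quotient and the bound would carry an additional dependence on $\mathbb{M}_{r}$.
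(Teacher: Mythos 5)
Your proof is correct and is essentially the argument the paper relies on: the paper simply cites Lemma~2 of Kunisch--Volkwein \cite{KV01}, whose proof is exactly this expansion of $\varphi$ in the $L^{2}$-orthonormal POD basis followed by bounding the stiffness-matrix quadratic form $\vec{a}^{T}\mathbb{S}_{r}\vec{a}$ by $|||\mathbb{S}_{r}|||_{2}\,|\vec{a}|^{2}$. Your closing remark about the role of $\mathcal{H}_{V}=L^{2}$ (so that $\mathbb{M}_{r}=I$) is also consistent with the paper's standing assumption.
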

\begin{proof}
	See Lemma 2 of \cite{KV01}.
\end{proof}

%The POD mass and stiffness matrices of the velocity and pressure space are defined as 
%\begin{equation*}
%\begin{aligned}
%&\mathbb{M}_{r} = (\varphi_{i}, \varphi_{j})_{L^{2}}, \ \ \ \mathbb{E}_{m} = (\psi_{i},  \psi_{j})_{L^{2}},
%\\
%&\mathbb{S}_{r} = (\nabla \varphi_{i}, \nabla \varphi_{j})_{L^{2}}, \mathbb{T} _{m}= (\nabla \psi_{i}, \nabla \psi_{j})_{L^{2}}.
%\end{aligned}
%\end{equation*}
%The following POD inverse estimate then holds: 
%\begin{lemma}\label{lemma:POD-inveq}[POD velocity and pressure inverse estimate]
%	For all $\varphi \in X_{r}$ and $\psi \in Q_{m}$ it holds
%	\begin{equation*}
%	\begin{aligned}
%	&\|\nabla \varphi \| \leq |||\mathbb{S}_{r}|||_{2}^{1/2}\|\varphi\| \ \text{and} \ \|\nabla \psi\| \leq |||\mathbb{T}_{m}|||_{2}^{1/2}\|\psi\|.
%	\end{aligned}
%	\end{equation*}
%\end{lemma}
%\begin{proof}
%	See Lemma 2 of \cite{KV01}.
%	\end{proof}

We next define the $L^{2}$ projection into the velocity space $X_{r}$, and the pressure space $Q_{m}$.
\begin{definition} We define the $L^{2}$ projection into the velocity space $X_{r}$, and the pressure space $Q_{m}$ as $P_{r}: L^{2}(\Omega) \rightarrow X_{r}$ and $\chi_{m}: L^{2}(\Omega) \rightarrow Q_{m}$ such that
	\begin{equation}
	\begin{aligned}
	(u - P_{r}u,\varphi) &= 0, \qquad \forall \varphi \in X_{r}, \ \ \text{and} \\
	(p - \chi_{m}p, \psi) &= 0, \qquad \forall \psi \in Q_{m}.
	\end{aligned}
	\end{equation}
\end{definition}

%We next define both the $L^{2}$ and Ritz projection into the velocity space $X_{r}$, and the pressure space $Q_{m}$.
%\begin{definition}[$L^{2}$ projection] We define the $L^{2}$ projection into the velocity space $X_{r}$, and the pressure space $Q_{m}$ as $P_{r}: L^{2}(\Omega) \rightarrow X_{r}$ and $\chi_{m}: L^{2}(\Omega) \rightarrow Q_{m}$ such that
%	\begin{equation}
%	\begin{aligned}
%	(u - P_{r}u,\varphi) &= 0, \qquad \forall \varphi \in X_{r}, \ \ \text{and} \\
%	(p - \chi_{m}p, \psi) &= 0, \qquad \forall \psi \in Q_{m}.
%	\end{aligned}
%	\end{equation}
%\end{definition}
%
% 
%\begin{definition}[Ritz projection] We define the Ritz projection into the velocity space $X_{r}$, and the pressure space $Q_{m}$ as $R_{r}: H^{1}(\Omega) \rightarrow X_{r}$ and $\iota_{m}: H^{1}(\Omega) \rightarrow Q_{m}$ such that
%	\begin{equation}
%	\begin{aligned}
%	(\nabla(u - R_{r}u),\nabla \varphi) &= 0, \qquad \forall \varphi \in X_{r}, \ \ \text{and} \\
%	(\nabla(p - \iota_{m}p), \nabla \psi) &= 0, \qquad \forall \psi \in Q_{m}.
%	\end{aligned}
%	\end{equation}
%\end{definition}

The following lemmas, proven in \cite{KV01,S14}, 
provide bounds for the error between the snapshots and their projections onto the POD space. 

%\textcolor{red}{I need to check all the results for the $H^{1}$ POD basis for the rest of this section.}
\begin{lemma}\label{v-proj-errL2} It holds that
	\begin{equation}
	\begin{aligned}
	&\frac{1}{N+1}\sum_{n = 0}^{N} \left \|u_{h}^{n} - \sum_{i=1}^{r}(u_{h}^{n},{\varphi_i}){\varphi}_i \right \|^{2}  = \sum_{i=r+1}^{N_{V}} {\lambda_i},\ \ \text{and} \\
	&\frac{1}{N+1}\sum_{n = 0}^{N} \left \| p_{h}^{n} - \sum_{i=1}^{m}(p_{h}^{n},\psi_i)\psi_i \right \|^{2} = \sum_{i = m + 1}^{N_P} \sigma_i .
	\end{aligned}
	\end{equation}
\end{lemma}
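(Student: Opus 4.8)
The plan is to establish this as the standard POD projection-error identity, deriving it from the spectral structure of the correlation matrix rather than simply quoting it. Write $P^{r}_{V}y := \sum_{i=1}^{r}(y,\varphi_{i})\varphi_{i}$ for the rank-$r$ $L^{2}$ POD projection. Since the modes $\{\varphi_{i}\}$ are $L^{2}$-orthonormal, the Pythagorean theorem gives, for each snapshot,
\[ \left\|u_{h}^{n} - \sum_{i=1}^{r}(u_{h}^{n},\varphi_{i})\varphi_{i}\right\|^{2} = \|u_{h}^{n}\|^{2} - \sum_{i=1}^{r}(u_{h}^{n},\varphi_{i})^{2}. \]
Summing over $n=0,\dots,N$ and dividing by $N+1$ reduces the entire claim to evaluating the two energy sums on the right-hand side.

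For the total energy, I would use that the full family $\{\varphi_{i}\}_{i=1}^{N_{V}}$ is an orthonormal basis of the snapshot space $X_{h,S}=\mathrm{span}\{u_{h}^{n}\}$, whose dimension is exactly $N_{V}=\mathrm{rank}(\mathbb{V})$. Since each $u_{h}^{n}$ lies in $X_{h,S}$, Parseval's identity yields $\|u_{h}^{n}\|^{2}=\sum_{i=1}^{N_{V}}(u_{h}^{n},\varphi_{i})^{2}$, and hence $\frac{1}{N+1}\sum_{n}\|u_{h}^{n}\|^{2}=\sum_{i=1}^{N_{V}}\big(\frac{1}{N+1}\sum_{n}(u_{h}^{n},\varphi_{i})^{2}\big)$.

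The crux is the identity $\frac{1}{N+1}\sum_{n=0}^{N}(u_{h}^{n},\varphi_{i})^{2}=\lambda_{i}$, which is where the POD eigenvalue structure enters and which I expect to be the only real obstacle. The cleanest route is to introduce the correlation operator $K\phi := \frac{1}{N+1}\sum_{n}(u_{h}^{n},\phi)u_{h}^{n}$ on $L^{2}$, which is self-adjoint and positive semidefinite with the $\varphi_{i}$ as its eigenfunctions, $K\varphi_{i}=\lambda_{i}\varphi_{i}$; then since $\|\varphi_{i}\|=1$,
\[ \lambda_{i} = (K\varphi_{i},\varphi_{i}) = \frac{1}{N+1}\sum_{n}(u_{h}^{n},\varphi_{i})^{2}. \]
Equivalently, one may argue through the method of snapshots: with $\varphi_{i}$ the normalized combination $\sum_{n}(\vec{a}_{i})_{n}u_{h}^{n}$ and $\vec{a}_{i}$ the orthonormal eigenvectors of $\mathbb{C}_{V}$, the relation $\mathbb{C}_{V}\vec{a}_{i}=\lambda_{i}\vec{a}_{i}$ yields simultaneously the orthonormality of the $\varphi_{i}$ and the same identity. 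Verifying that the $\varphi_{i}$ are genuine eigenfunctions of $K$ (or, in the snapshot picture, carrying out the matrix bookkeeping with the correct normalization constant) is the one step that requires care.

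Combining the three observations,
\[ \frac{1}{N+1}\sum_{n}\|u_{h}^{n}-P^{r}_{V}u_{h}^{n}\|^{2} = \sum_{i=1}^{N_{V}}\lambda_{i} - \sum_{i=1}^{r}\lambda_{i} = \sum_{i=r+1}^{N_{V}}\lambda_{i}, \]
which is the first identity. The second follows verbatim with $p_{h}^{n}$, $\psi_{i}$, $\mathbb{C}_{P}$, $\sigma_{i}$, and $N_{P}$ replacing their velocity counterparts, since nothing in the argument used any property of the velocity snapshots beyond their membership in a finite-dimensional subspace of $L^{2}$. Everything apart from the eigenvalue identity of the third paragraph is routine orthonormal-basis bookkeeping.
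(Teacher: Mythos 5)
Your proof is correct. Note that the paper offers no argument for this lemma at all: it is quoted directly from the literature (\cite{KV01,S14}), so any actual derivation is necessarily a different route from the paper's one-line citation. Your argument --- the Pythagorean identity for the orthogonal projection onto the orthonormal modes, Parseval on the snapshot space $X_{h,S}$ using that $\{\varphi_i\}_{i=1}^{N_V}$ is an orthonormal basis of it, and the identity $\lambda_i=(K\varphi_i,\varphi_i)=\frac{1}{N+1}\sum_{n=0}^{N}(u_h^n,\varphi_i)^2$ for the correlation operator $K\phi:=\frac{1}{N+1}\sum_{n}(u_h^n,\phi)u_h^n$ --- is in substance the canonical proof given in those references, so you have reconstructed the standard argument rather than found an alternative one. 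You are also right that the only step requiring care is the equivalence between the paper's matrix eigenproblem $\mathbb{C}_V\vec{a}_i=\lambda_i\vec{a}_i$ and the operator eigenproblem $K\varphi_i=\lambda_i\varphi_i$: carrying out the bookkeeping with orthonormal $\vec{a}_i$ gives $\varphi_i=\bigl((N+1)\lambda_i\bigr)^{-1/2}\sum_{n=0}^{N}(\vec{a}_i)_n u_h^n$, from which both the orthonormality of the modes and the energy identity follow (incidentally, the paper's own formula $\vec{\varphi}_i=\frac{1}{\sqrt{\lambda_i}}\mathbb{C}_V\vec{a}_i$ appears to be a typo, since applying $\mathbb{C}_V$ to its own eigenvector merely rescales it; the snapshot matrix $\mathbb{V}$ is what should appear there, which reinforces your point that this normalization must be tracked rather than taken from the paper). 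What your self-contained proof buys is that the identity becomes verifiable within the paper and makes explicit exactly where the eigenvalue structure enters; what the citation buys is brevity and coverage of the general Hilbert-space setting $\mathcal{H}_V$, of which the $L^2$ case used here is a special instance.
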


%\begin{lemma}\label{v-proj-errL2}[POD projection error] We have
%	\begin{equation}
%	\begin{aligned}
%	&\sum_{n = 0}^{N} \left \|u_{h}^{n} - \sum_{i=1}^{R}(u_{h}^{n},{\varphi_i})_{\mhv}{\varphi}_i \right \|_{\mathcal{H}_{V}}^{2}  = \sum_{i=R+1}^{N_{V}} {\lambda_i}, \ \ \text{and} \\
%	&\sum_{n = 0}^{N} \left \| p_{h}^{n} - \sum_{i=1}^{M}(p_{h}^{n},\psi_i)_{\mhp}\psi_i \right \|_{\mathcal{H}_{P}}^{2} = \sum_{i = M + 1}^{N_P} \sigma_i. 
%	\end{aligned}
%	\end{equation}
%\end{lemma}

We also have the following $H^{1}$ error bound for the velocity.

\begin{lemma}\label{v-proj-errH1}It holds that 
	\begin{equation}
	\begin{aligned}
	&\frac{1}{N+1}\sum_{n = 0}^{N} \left \|\nabla(u_{h}^{n} - \sum_{i=1}^{r}(u_{h}^{n},{\varphi_i}){\varphi}_i) \right \|^{2}  =  \sum_{i=r+1}^{N_{V}} \|  \nabla {\varphi}_i\| ^2\lambda_i.
%	\\
%	&\frac{1}{N+1}\sum_{n = 0}^{N} \left \|\nabla(p_{h}^{n} - \sum_{i=1}^{m}(p_{h}^{n},{\psi_i}){\psi}_i) \right \|^{2}  =  \sum_{i=m+1}^{N_{p}} \|  \nabla {\psi}_i\| ^2\sigma_i.
	\end{aligned}
	\end{equation}
\end{lemma}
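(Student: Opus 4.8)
The plan is to exploit the $L^2$-orthonormality of the POD basis to rewrite the projection error as an explicit tail sum, and then to use the temporal orthogonality built into the POD eigenvalue problem to collapse the resulting double sum to its diagonal. Since $\mathcal{H}_V = L^2$ and each snapshot lies in $X_{h,S} = \mathrm{span}\{\varphi_i\}_{i=1}^{N_V}$, I would first expand $u_h^n = \sum_{i=1}^{N_V}(u_h^n,\varphi_i)\varphi_i$ exactly, so that the truncated projection leaves the remainder $u_h^n - \sum_{i=1}^r(u_h^n,\varphi_i)\varphi_i = \sum_{i=r+1}^{N_V}(u_h^n,\varphi_i)\varphi_i$. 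Applying the gradient and expanding the $L^2$ norm squared then gives
\begin{equation*}
\left\|\nabla\Big(u_h^n - \sum_{i=1}^r(u_h^n,\varphi_i)\varphi_i\Big)\right\|^2 = \sum_{i=r+1}^{N_V}\sum_{j=r+1}^{N_V}(u_h^n,\varphi_i)(u_h^n,\varphi_j)(\nabla\varphi_i,\nabla\varphi_j).
\end{equation*}

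Unlike the $L^2$ computation in Lemma \ref{v-proj-errL2}, the off-diagonal terms do not drop out pointwise here, because the POD modes are orthonormal in $L^2$ but not in $H^1$, so $(\nabla\varphi_i,\nabla\varphi_j)\neq 0$ for $i\neq j$ in general. The key observation is that these cross terms cancel only after summing over the snapshot index $n$. From the POD eigenvalue problem $\mathbb{C}_V\vec a_i = \lambda_i\vec a_i$, the temporal coefficient sequence $\{(u_h^n,\varphi_i)\}_{n=0}^N$ equals $\sqrt{(N+1)\lambda_i}$ times the $i$-th orthonormal eigenvector $\vec a_i$ of $\mathbb{C}_V$; orthonormality of the $\vec a_i$ then yields the identity
\begin{equation*}
\sum_{n=0}^N (u_h^n,\varphi_i)(u_h^n,\varphi_j) = (N+1)\lambda_i\,\delta_{ij}.
\end{equation*}
Substituting this into the version of the displayed expansion summed over $n$ collapses the double sum to its diagonal, producing $(N+1)\sum_{i=r+1}^{N_V}\lambda_i\|\nabla\varphi_i\|^2$; dividing by $N+1$ gives the claimed equality.

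The only genuinely delicate step is the treatment of the off-diagonal gradient terms: one must resist arguing pointwise in $n$, where they are nonzero, and instead recognize that the temporal orthonormality of the eigenvectors $\vec a_i$ is what forces cancellation after summation. Everything else — the exact finite expansion, the gradient expansion, and the final bookkeeping — is routine. I would double-check the normalization constant $\sqrt{(N+1)\lambda_i}$ against the correlation-matrix scaling $\mathbb{C}_V = \frac{1}{N+1}(u_S^n,u_S^k)$ used here, and note as a consistency check that the diagonal part $\frac{1}{N+1}\sum_n (u_h^n,\varphi_i)^2 = \lambda_i$ is already implicit in Lemma \ref{v-proj-errL2} via differencing successive truncation levels.
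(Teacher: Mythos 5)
Your proof is correct: the expansion of each snapshot in the full POD basis, the double-sum gradient expansion, and the temporal orthogonality identity $\sum_{n=0}^{N}(u_h^n,\varphi_i)(u_h^n,\varphi_j)=(N+1)\lambda_i\delta_{ij}$ (which follows from $(u_h^n,\varphi_i)=\sqrt{(N+1)\lambda_i}(a_i)_n$ and the orthonormality of the eigenvectors of $\mathbb{C}_V$) are all valid, and the off-diagonal cancellation after summing over $n$ is exactly the crux. The paper itself gives no proof, only a citation to \cite{KV01,S14}, and your argument is essentially the standard one found in those references, so there is nothing further to compare.
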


From these projection estimates we can derive error estimates for the $L^{2}$ projection error into the velocity space $X_{r}$ using the approach of Lemma 3.3 in \cite{IW14}.
\begin{lemma}\label{pod-velo-proj-lemma-L2}
	For any $u^{n} \in V$ the $L^{2}$ projection error into $X_{r}$  satisfies the following estimates
	\begin{equation}\label{proj-err-1}
	\begin{aligned}
	&\frac{1}{N+1}\sum_{n=0}^{N} \|u^{n} - P_{r}u^{n}\|^{2} \leq C(\nu,p) \left (h^{2s} + \Delta t^{2} + \sum_{i= r+1}^{N_V} {\lambda}_{i} \right), \ \ \text{and} \\
	& \frac{1}{N+1}\sum_{n=0}^{N} \|\nabla(u^{n} - P_{r}u^{n})\|^{2} \leq {C(\nu,p)}\bigg((1+|||{\mathbb S}_r|||_{2}) h^{2s}   + (1+|||{\mathbb S}_r|||_{2})\Delta t^2
	\\ & \hspace{4cm} + \sum_{i=r+1}^{N_{V}} \|  \nabla {\varphi}_i\| ^2\lambda_i\bigg) . 
	\end{aligned}
	\end{equation}
\end{lemma}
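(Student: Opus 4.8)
The plan is to reduce the projection error of the \emph{continuous} solution $u^{n}=u(t^{n})$ to two ingredients that are already under control: the POD projection error of the \emph{finite element snapshots} $u_{h}^{n}$ (Lemmas \ref{v-proj-errL2} and \ref{v-proj-errH1}), and the finite element error $u^{n}-u_{h}^{n}$ (the estimate following \eqref{eqn:BE_FEM}). The bridge between them is the best-approximation property of the $L^{2}$ projection: since $P_{r}u^{n}$ is the closest element of $X_{r}$ to $u^{n}$ in the $L^{2}$ norm, testing against the particular competitor $P_{r}u_{h}^{n}\in X_{r}$ gives
\begin{equation*}
\|u^{n}-P_{r}u^{n}\| \le \|u^{n}-P_{r}u_{h}^{n}\| \le \|u^{n}-u_{h}^{n}\| + \|u_{h}^{n}-P_{r}u_{h}^{n}\| .
\end{equation*}
Because $\{\varphi_{i}\}$ is $L^{2}$-orthonormal, $P_{r}u_{h}^{n}=\sum_{i=1}^{r}(u_{h}^{n},\varphi_{i})\varphi_{i}$, so the last term is exactly the snapshot quantity appearing in Lemma \ref{v-proj-errL2}.

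For the $L^{2}$ estimate I would square, sum over $n$, divide by $N+1$, and apply $(a+b)^{2}\le 2a^{2}+2b^{2}$. The snapshot term then contributes $2\sum_{i=r+1}^{N_{V}}\lambda_{i}$ by Lemma \ref{v-proj-errL2}. For the finite element term I would invoke the estimate following \eqref{eqn:BE_FEM}, used uniformly in the final time index so that $\max_{n}\|u^{n}-u_{h}^{n}\|^{2}\le C(\nu)(h^{2s}+\Delta t^{2})$; then $\frac{1}{N+1}\sum_{n}\|u^{n}-u_{h}^{n}\|^{2}\le \max_{n}\|u^{n}-u_{h}^{n}\|^{2}$ absorbs the factor into $C(\nu,p)$, yielding the first inequality.

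For the $H^{1}$ estimate the difficulty is that $P_{r}$ is only an $L^{2}$ projection, so there is no $H^{1}$ best-approximation property to exploit. Instead I would use the three-term splitting
\begin{equation*}
u^{n}-P_{r}u^{n} = (u^{n}-u_{h}^{n}) + (u_{h}^{n}-P_{r}u_{h}^{n}) + P_{r}(u_{h}^{n}-u^{n}),
\end{equation*}
and take gradients. The first term is handled by the gradient part of the finite element estimate, $\nu\,\Delta t\sum_{n}\|\nabla(u^{n}-u_{h}^{n})\|^{2}\le C(\nu)(h^{2s}+\Delta t^{2})$, so after dividing by $N+1$ and using $\Delta t=T/N$ it contributes $C(\nu,p)(h^{2s}+\Delta t^{2})$. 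The second term sums to $\sum_{i=r+1}^{N_{V}}\|\nabla\varphi_{i}\|^{2}\lambda_{i}$ by Lemma \ref{v-proj-errH1}. The third term lies in $X_{r}$, so the POD inverse estimate of Lemma \ref{lemma:POD-inveq} applies:
\begin{equation*}
\|\nabla P_{r}(u_{h}^{n}-u^{n})\| \le |||\mathbb{S}_{r}|||_{2}^{1/2}\,\|P_{r}(u_{h}^{n}-u^{n})\| \le |||\mathbb{S}_{r}|||_{2}^{1/2}\,\|u_{h}^{n}-u^{n}\|,
\end{equation*}
the second inequality being the $L^{2}$-stability (nonexpansiveness) of $P_{r}$. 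Squaring, summing, and reusing the $L^{2}$ finite element bound produces the factor $|||\mathbb{S}_{r}|||_{2}(h^{2s}+\Delta t^{2})$; combining with the first term gives the stated $(1+|||\mathbb{S}_{r}|||_{2})$ prefactor.

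I expect the main obstacle to be precisely this third term: controlling the gradient of the $X_{r}$-component of the finite element error without an $H^{1}$ best-approximation property, which is what forces the appearance of the inverse-estimate constant $|||\mathbb{S}_{r}|||_{2}$ and distinguishes the $H^{1}$ bound from the $L^{2}$ one. A secondary bookkeeping point is that the finite element estimate as quoted is stated at the final time level only; to average over all $n$ I would rely on its holding uniformly in $N'$ (as is standard for the backward Euler analysis in \cite{J16}) and on the cancellation $\frac{1}{(N+1)\Delta t}\approx T^{-1}$ to convert the accumulated gradient bound into a time average.
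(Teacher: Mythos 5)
Your proposal is correct and is essentially the paper's own argument: the paper proves this lemma by invoking ``the approach of Lemma 3.3 in \cite{IW14}'', which is precisely your splitting through the finite element solution --- best $L^{2}$-approximation plus triangle inequality for the $L^{2}$ bound, and the three-term decomposition $(u^{n}-u_{h}^{n})+(u_{h}^{n}-P_{r}u_{h}^{n})+P_{r}(u_{h}^{n}-u^{n})$ with the POD inverse estimate and $L^{2}$-stability of $P_{r}$ for the gradient bound. Your handling of the bookkeeping (uniformity of the backward Euler estimate in the time index, and converting the accumulated gradient bound into a time average) is also the standard way this is made rigorous.
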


A similar results holds for the for the $L^{2}$ projection error into the pressure space $Q_{m}$. 
%\begin{theorem}
\begin{lemma}\label{pod-press-proj-lemma-L2-basis}
	For any $p^{n} \in Q$ the $L^{2}$ projection error satisfies the following estimates
	\begin{equation}\label{proj-err-2}
	\begin{aligned}
	&\frac{1}{N+1}\sum_{n=0}^{N} \|p^{n} - \chi_{m}p^{n}\|^{2} \leq C(\nu,p) \left (h^{2k} + \Delta t^{2} + \sum_{i= m+1}^{N_{P}} \sigma_{i} \right).% , \ \ \text{and} 
%	 \\
%	& \frac{1}{N+1}\sum_{n=0}^{N} \|\nabla(p^{n} - \chi_{m}p^{n})\|^{2} \leq {C(\nu,p)}(1+|||{\mathbb T}_m|||_{2})h^{2k}  + (1+|||{\mathbb T}_m|||_{2})\Delta t^2
%	\\ & \hspace{4cm} + \sum_{i=m+1}^{N_{P}} \|  \nabla {\psi}_i\| ^2\sigma_i\bigg) . 
	\end{aligned}
	\end{equation}
\end{lemma}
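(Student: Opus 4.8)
The plan is to mirror the proof of Lemma~\ref{pod-velo-proj-lemma-L2}, exploiting the optimality of the $L^{2}$ projection $\chi_{m}$ together with the snapshot projection identity of Lemma~\ref{v-proj-errL2}. Since $\mhp = L^{2}$, the basis $\{\psi_{i}\}$ is $L^{2}$-orthonormal and $\chi_{m}q = \sum_{i=1}^{m}(q,\psi_{i})\psi_{i}$ for any $q \in Q$. First I would insert the finite element snapshot $p_{h}^{n}$: because $\chi_{m}p_{h}^{n} \in Q_{m}$ and $\chi_{m}p^{n}$ is the best $L^{2}(\Omega)$ approximation of $p^{n}$ in $Q_{m}$, we obtain the pointwise-in-time bound
\begin{equation*}
\|p^{n} - \chi_{m}p^{n}\| \leq \|p^{n} - \chi_{m}p_{h}^{n}\| \leq \|p^{n} - p_{h}^{n}\| + \|p_{h}^{n} - \chi_{m}p_{h}^{n}\|,
\end{equation*}
where the last step is the triangle inequality. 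Squaring, using $(a+b)^{2}\leq 2a^{2}+2b^{2}$, summing over $n=0,\ldots,N$, and dividing by $N+1$ then splits the projection error into a finite element error contribution and a POD truncation contribution.

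The POD truncation term is handled exactly. Since $\chi_{m}p_{h}^{n} = \sum_{i=1}^{m}(p_{h}^{n},\psi_{i})\psi_{i}$, the second identity of Lemma~\ref{v-proj-errL2} gives
\begin{equation*}
\frac{1}{N+1}\sum_{n=0}^{N} \|p_{h}^{n} - \chi_{m}p_{h}^{n}\|^{2} = \sum_{i=m+1}^{N_{P}}\sigma_{i},
\end{equation*}
which accounts precisely for the $\sum_{i=m+1}^{N_{P}}\sigma_{i}$ term in the statement. Thus the entire remaining task reduces to bounding the averaged finite element pressure error $\frac{1}{N+1}\sum_{n=0}^{N}\|p^{n} - p_{h}^{n}\|^{2}$ by $C(\nu,p)(h^{2k} + \Delta t^{2})$.

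This finite element pressure estimate is the main obstacle. Unlike the velocity case, the excerpt quotes only a velocity error bound (Theorem 7.78 of \cite{J16}) and the $Q_{h}$ approximation property $\inf_{q_{h}}\|p^{n}-q_{h}\|\leq C h^{k}$; the matching pressure estimate
\begin{equation*}
\Delta t \sum_{n=1}^{N} \|p(t_{n}) - p_{h}^{n}\|^{2} \leq C(\nu,p)\left(h^{2k} + \Delta t^{2}\right)
\end{equation*}
for the backward-Euler Taylor--Hood scheme \eqref{eqn:BE_FEM} must be supplied. The standard route is to test \eqref{eqn:BE_FEM} with a velocity realizing the discrete inf-sup condition \eqref{eqn:inf-supFE}, so that for a suitable projection $\pi_{h}p^{n}$ into $Q_{h}$ the quantity $\beta_{h}\|p_{h}^{n} - \pi_{h}p^{n}\|$ is controlled by the momentum-equation residual, i.e.\ by the temporal truncation error $O(\Delta t)$ (whose smallness is guaranteed by the $u_{tt}\in L^{2}(0,T,H^{s+1})$ regularity in Assumption~\ref{assumption:regularity}), the velocity error already bounded in Theorem 7.78, and the $O(h^{k})$ pressure approximation error. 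Combining this with the triangle inequality $\|p^{n}-p_{h}^{n}\|\leq \|p^{n}-\pi_{h}p^{n}\| + \|\pi_{h}p^{n}-p_{h}^{n}\|$ yields the displayed $\ell^{2}(L^{2})$ bound; dividing by $N+1 \approx T/\Delta t$ converts $\Delta t\sum_{n}$ into $\frac{1}{N+1}\sum_{n}$ and produces the $h^{2k}+\Delta t^{2}$ terms, with the factor $1/T$ absorbed into $C(\nu,p)$. I expect the conceptual content to be routine; the crux is the careful bookkeeping of the $\nu$-dependence and the regularity demands, rather than any genuine difficulty, which is why it is legitimate to cite the estimates of \cite{J16,IW14} and assemble them as above.
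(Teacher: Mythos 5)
Your proposal is correct and follows essentially the same route the paper intends: the paper gives no explicit proof of this lemma, stating only that it holds ``similarly'' to Lemma~\ref{pod-velo-proj-lemma-L2}, i.e., via the splitting of Lemma 3.3 in \cite{IW14} --- optimality of the $L^{2}$ projection $\chi_{m}$, a triangle inequality through the finite element snapshot $p_{h}^{n}$, the exact POD truncation identity of Lemma~\ref{v-proj-errL2}, and a finite element error bound --- which is precisely your argument. Your remark that the needed $\ell^{2}$-in-time finite element pressure estimate $\Delta t\sum_{n}\|p(t_{n})-p_{h}^{n}\|^{2}\leq C(\nu,p)\left(h^{2k}+\Delta t^{2}\right)$ is nowhere quoted in the paper (only the velocity bound from Theorem 7.78 of \cite{J16} is) is accurate, and importing it from \cite{J16} via the discrete inf-sup argument, as you sketch, is exactly what the paper implicitly relies on.
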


%\begin{lemma}\label{pod-press-proj-lemma-H1-basis}
%	Let $\mhp = H^{1}$. For any $p^{n} \in Q$ the $L^{2}$ projection error satisfies the following estimates
%	\begin{equation}\label{proj-err-3}
%	\begin{aligned}
%	&\sum_{n=0}^{N} \|p^{n} - \chi_{m}p^{n}\|^{2} \leq C(\nu,p) \left (h^{2k} + \Delta t^{2} + \sum_{i= m+1}^{N_{P}} \sigma_{i} \right) , \ \ \text{and} 
%	\\
%	& \sum_{n=0}^{N} \|\nabla(p^{n} - \chi_{m}p^{n})\|^{2} \leq {C(\nu,p)}\bigg(h^{2k} + |||{\mathbb E}^{-1}_m|||_{2} h^{2k + 2}   + (1+|||{\mathbb E}^{-1}_m|||_{2})\Delta t^2
%	\\ & \hspace{4cm} + \sum_{i=m+1}^{N_{P}} \sigma_i\bigg) . 
%	\end{aligned}
%	\end{equation}
%\end{lemma}

To prove pointwise in time error estimates for the velocity, we must make the following assumption similar to the one stated in \cite{IW14}.

\begin{assumption}\label{assumption:conv}
		For any $u^{n} \in V,$ the $L^{2}$ projection error into $X_{R}$  satisfies the following estimates
		\begin{equation}\label{proj-err-v-assump}
		\begin{aligned}
		&	\max_{n} \|u^{n} - P_{r}u^{n}\|^{2} \leq C(\nu,p) \left (h^{2s} + \Delta t^{2} + \sum_{i= r+1}^{N_V} {\lambda}_{i} \right), \ \ \text{and} \\
		&\max_{n} \|\nabla(u^{n} - P_{r}u^{n})\|^{2} \leq {C(\nu,p)}\bigg((1+|||{\mathbb S}_r|||_{2}) h^{2s}   + (1+|||{\mathbb S}_r|||_{2})\Delta t^2
		\\ & \hspace{4cm} + \sum_{i=r+1}^{N_{V}} \|  \nabla {\varphi}_i\| ^2\lambda_i\bigg) .  \\
		\end{aligned}
		\end{equation}
%		For any $p^{n} \in Q$ the $L^{2}$ projection error satisfies the following estimates
%		\begin{equation}\label{proj-err-p-assump}
%		\begin{aligned}
%		&\max_{n} \|p^{n} - \chi_{m}p^{n}\|^{2} \leq C(\nu,p) \left (h^{2k}  + \Delta t^{2} + \sum_{i= m+1}^{N_{P}} \sigma_{i} \right). %\\
%			& \sup_{n} \|\nabla(p^{n} - \chi_{m}p^{n})\|^{2} \leq {C(\nu,p)}(1+|||{\mathbb T}_m|||_{2})h^{2k}  + (1+|||{\mathbb T}_m|||_{2})\Delta t^2
%		\\ & \hspace{4cm} + \sum_{i=m+1}^{N_{P}} \|  \nabla {\psi}_i\| ^2\sigma_i\bigg) . 
%		\end{aligned}
%		\end{equation}
\end{assumption}

 We denote by $e_{u}$ and $e_{p}$ the error between the true velocity and pressure solution and their respective POD approximations.  We then split the error for the velocity and pressure via the $L^{2}$ projection into the space $X_{r}$ and $Q_{m},$ respectively
 %We will split the error for the velocity via the $L^{2}$ projection into the space $X_{R}$ and the pressure according to the $H^{1}$/Ritz projection
\begin{equation*}
\begin{aligned}
e^{n+1}_{u} = u^{n+1} - u^{n+1}_{r} = (u^{n+1} - P_{r}(u^{n+1})) + (P_{r}(u^{n+1}) - u^{n+1}_{R}) &= \eta^{n+1} - \xi_{r}^{n+1}
\\ e^{n+1}_{p} = p^{n+1} - p^{n+1}_{m} = (p^{n+1} - \chi_{m}(p^{n+1})) + (\chi_m(p^{n+1}) - p^{n+1}_{m}) &= \kappa^{n+1} - \pi_{m}^{n+1}.
\end{aligned}
\end{equation*}

Lastly, we state  a convergence result for the velocity determined by the BE-ROM scheme \eqref{eqn:BE_ROM}.

\begin{theorem}\label{theorem:velocity_err}
	Consider BE-ROM \eqref{eqn:BE_ROM} and let $C$ be a constant which may depend on $f,u,p,C_{b^{\ast}},C_{stab}$ and, $\nu$, but is independent of $h, \Delta t,r,m, \lambda_{i},$ and ${\mathbb S}_r$.   Under the regularity conditions from Assumption \ref{assumption:regularity} and the projection error estimates from Assumption \ref{assumption:conv}, for any $0 \leq n \leq N$, the following bound on the velocity error holds
	\begin{equation}
	\|e_{u}^{n+1}\|^{2} + \nu \vvvert \nabla e_{u} \vvvert_{2,0}^{2} \leq C\left((1 + |||{\mathbb S}_r|||_{2}) (h^{2s} + \dt^{2})  + \sum_{i=r+1}^{N_{V}}\lambda_i + \sum_{i=r+1}^{N_{V}}\lambda_i \|\nabla \varphi_{i}\|^{2}\right) .
	\end{equation}
	\begin{proof}
	The proof is identical to that of Theorem 4.1 in \cite{MRXI17}.
	\end{proof}
\end{theorem}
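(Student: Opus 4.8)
The plan is to run the classical energy argument for POD velocity error estimates, using the $L^2$ projection $P_r$ as the pivot. First I would derive the error equation: evaluate the weak form \eqref{wfwf} at $t_{n+1}$ with test function $\varphi\in X_r\subset X$ and subtract BE-ROM \eqref{eqn:BE_ROM}. For the pressure contribution I use that $X_r\subset V^{div}_h$, so $(q_h,\nabla\cdot\varphi)=0$ for every $q_h\in Q_h$ and hence $(p^{n+1},\nabla\cdot\varphi)=(p^{n+1}-q_h,\nabla\cdot\varphi)$ for the best finite element pressure approximant, which is bounded by $Ch^k\|\nabla\varphi\|$ via the $Q_h$ approximation property. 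I would also insert the backward Euler consistency term $\tau^{n+1}:=u_t(t_{n+1})-\bigl(u(t_{n+1})-u(t_n)\bigr)/\dt$, controlled by $\dt\,\|u_{tt}\|$ through Taylor expansion under Assumption \ref{assumption:regularity}, so that the discrete time-derivative difference becomes $\bigl(e_u^{n+1}-e_u^{n}\bigr)/\dt$ plus the truncation error.

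Next I decompose $e_u^{n+1}=\eta^{n+1}-\xi_r^{n+1}$ and test the error equation with $\varphi=\xi_r^{n+1}\in X_r$. The key simplification is that, since $P_r$ is the $L^2$ projection, $(\eta^{n+1}-\eta^{n},\xi_r^{n+1})=0$, so the projection part drops out of the discrete time-derivative term; the polarization identity applied to the remaining $\xi_r$ term gives $\tfrac{1}{2\dt}\bigl(\|\xi_r^{n+1}\|^2-\|\xi_r^{n}\|^2\bigr)$ plus a nonnegative contribution, and the viscous term yields $\nu\|\nabla\xi_r^{n+1}\|^2$ on the left once $\nu(\nabla\eta^{n+1},\nabla\xi_r^{n+1})$ is moved to the right. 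Taking $u_r^0=P_r u^0$ makes $\xi_r^0=0$, which is convenient for the summation step.

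The heart of the argument, and the step I expect to be the main obstacle, is the nonlinear difference $b^{\ast}(u^{n+1},u^{n+1},\xi_r^{n+1})-b^{\ast}(u_r^{n},u_r^{n+1},\xi_r^{n+1})$. I would split it as $b^{\ast}(u^{n+1}-u_r^{n},u^{n+1},\xi_r^{n+1})+b^{\ast}(u_r^{n},e_u^{n+1},\xi_r^{n+1})$, then write $u^{n+1}-u_r^{n}=(u^{n+1}-u^{n})+\eta^{n}-\xi_r^{n}$ and $e_u^{n+1}=\eta^{n+1}-\xi_r^{n+1}$, noting that $b^{\ast}(u_r^{n},\xi_r^{n+1},\xi_r^{n+1})=0$ by skew-symmetry. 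Each surviving trilinear term is estimated with Lemma \ref{lemma:trilinear} and Young's inequality: the factors $\|\nabla u^{n+1}\|$ are uniformly bounded by Assumption \ref{assumption:regularity}, and the terms carrying $\|\nabla u_r^{n}\|$ (together with an $\eta$ factor) are summed and controlled by $\dt\sum\|\nabla u_r^{n}\|^2\le\nu^{-1}C_{stab}$ from the BE-ROM stability estimate Lemma \ref{lemma:BE-ROM-stability}. The delicate point is the genuinely nonlinear term $b^{\ast}(\xi_r^{n},u^{n+1},\xi_r^{n+1})$: its gradient factors in $\xi_r$ must be absorbed into the viscous dissipation (using an interpolation-type trilinear bound and Young's inequality, with $\xi_r^0=0$ so reindexing causes no loss) so that it leaves only an $L^2$-norm Gronwall contribution whose constant depends on $\|\nabla u\|_{L^\infty}$, $C_{b^{\ast}}$, $\nu$ and $C_{stab}$ but is independent of the discretization. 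This is precisely what keeps $\vvvert\mathbb{S}_r\vvvert_2$ out of the Gronwall exponent, so that it enters the final bound only linearly, through the projection-error data terms.

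Finally I would sum over $n=0,\dots,N'$, telescope the time-derivative term, and apply the discrete Gronwall inequality (for $\dt$ sufficiently small) to bound $\|\xi_r^{N'+1}\|^2+\nu\dt\sum_{n}\|\nabla\xi_r^{n+1}\|^2$. The data driving the estimate are the projection errors $\|\eta\|$ and $\|\nabla\eta\|$, the truncation term, and the pressure approximation term; substituting Lemmas \ref{v-proj-errL2}, \ref{v-proj-errH1} and \ref{pod-velo-proj-lemma-L2} together with the pointwise bounds of Assumption \ref{assumption:conv} converts these into the $(1+\vvvert\mathbb{S}_r\vvvert_2)(h^{2s}+\dt^2)$, $\sum_{i=r+1}^{N_V}\lambda_i$ and $\sum_{i=r+1}^{N_V}\lambda_i\|\nabla\varphi_i\|^2$ contributions. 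Combining with the triangle inequalities $\|e_u^{n+1}\|\le\|\eta^{n+1}\|+\|\xi_r^{n+1}\|$ and $\vvvert\nabla e_u\vvvert_{2,0}\le\vvvert\nabla\eta\vvvert_{2,0}+\vvvert\nabla\xi_r\vvvert_{2,0}$, and bounding the $\eta$ pieces again by the projection estimates, yields the stated result.
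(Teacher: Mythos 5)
Your proposal is correct, and it is essentially the paper's own proof: the paper simply defers to Theorem 4.1 of the cited reference \cite{MRXI17}, whose argument is exactly this energy estimate --- error splitting through the $L^2$ projection $P_r$, testing with $\xi_r^{n+1}$, eliminating the pressure via $X_r\subset V^{div}_h$, splitting the trilinear difference with skew-symmetry, absorbing gradient terms into the viscous dissipation so that $\vvvert\mathbb{S}_r\vvvert_2$ stays out of the Gronwall exponent, and concluding with the discrete Gronwall inequality and the projection estimates of Assumption \ref{assumption:conv}. You have in effect reconstructed the proof the paper cites rather than writes out.
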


\subsection{Momentum Equation Stability and Error Analysis}

Next, we conduct a full stability and error analysis for the MER formulation \eqref{eqn:posteriori-momentum}. We begin by stating some preliminary definitions and lemmas. 

The spaces $X_{r}$ and $S_{m}$ have the following dual norms 
\begin{equation*}
\|w\|_{X^{\ast}_{r}}:= \sup_{\varphi \in X_{r}} \frac{(w,\varphi)}{\| \nabla \varphi \|}  \qquad \|w\|_{S^{\ast}_{m}}:= \sup_{\zeta \in S_{m}} \frac{(w,\zeta)}{\| \nabla \zeta\|}.
\end{equation*}

We recall the strengthened Cauchy-Buniakowskii-Schwarz (CBS). This inequality has been used in the analysis for multilevel schemes \cite{EV91}  and recently in the analysis of ROMs \cite{DILMS19,M14,R19}.

\begin{lemma}\label{CBS}
	Given a Hilbert space V and two finite dimensional subspaces  $V_{1} \subset V$ and $V_{2} \subset V$ with trivial intersection: 
	\begin{equation*}
	V_{1} \cap V_{2} = \{0\},
	\end{equation*}
	then there exists $ 0 \leq \alpha < 1$ such that
	\begin{equation*}
	|(v_{1},v_{2})| \leq \alpha \|v_{1}\|\|v_{2}\| \ \ \ \forall v_1 \in V_{1}, v_2 \in V_{2}.
	\end{equation*} 
\end{lemma}

In the ensuing analysis we will be interested in computing the value of $\alpha$ between the spaces $X_{r}$ and $S_{m}$. This can also be interpreted as determining the first principal angle defined as

	\begin{equation}\label{first_principal_angle}
\theta_{1} := \min_{\varphi \neq 0, \zeta \neq 0} \left\{\arccos\left({\frac{|(\varphi,\zeta)|}{\|\varphi\|\|\zeta\|}}\right) \bigg| \varphi \in X_{r}, \zeta \in S_{m} \right\},
\end{equation} 
with $0 < \theta_{1} \leq \frac{\pi}{2}$. 

Numerous methods for calculating the principal angle between two spaces using either a QR or SVD factorization have been devised in \cite{KA02,WS03} and the references therein. We note that due to the relative small size of the reduced basis, this computation is negligible in terms of computational cost and storage.

Next, we prove an $H^{1}$ stability results for the $L^{2}$ projection from $X_{r}$ into $S_{m}$. In the finite element setting, this type of result is known to hold independent of the cardinality of the basis for quasi-uniform and certain regular meshes \cite{BY14}. In the ROM setting, however, this is currently an open problem (see Remark 4.1 in \cite{XWWI18}). 
\begin{lemma}
	Let $u_{m} \in S_{m}$ and $P_{r}:S_{m} \rightarrow X_{r}$ denote the $L^{2}$ projection from $S_{m}$ to $X_{r}$. Letting 
	\begin{equation*}
	C_{r}^{H^{1}}  := \left\|\sum_{i=1}^{r} \nabla \varphi_{i} \right\|,
	\end{equation*}
	 the following stability bound holds
	\begin{equation}
	\|\nabla P_{r} u_{m}\| \leq \alpha C_{P} C_{r}^{H^{1}} \|\nabla u_{m}\|.
	\end{equation}

\end{lemma}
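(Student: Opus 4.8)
The plan is to exploit the $L^{2}$-orthonormality of the velocity POD basis to obtain an explicit formula for $P_{r}u_{m}$, differentiate it, and then control the resulting scalar coefficients by combining the strengthened Cauchy--Buniakowskii--Schwarz inequality (Lemma \ref{CBS}) with the Poincar\'e inequality. Since we have assumed $\mathcal{H}_{V} = L^{2}$, the constraint $(\varphi_{i},\varphi_{j})_{\mathcal{H}_{V}} = \delta_{ij}$ gives $\mathbb{M}_{r} = I$, so the $L^{2}$ projection is simply $P_{r}u_{m} = \sum_{i=1}^{r}(u_{m},\varphi_{i})\varphi_{i}$. Taking gradients yields
\[
\nabla P_{r} u_{m} = \sum_{i=1}^{r} (u_{m},\varphi_{i})\,\nabla \varphi_{i},
\]
so the whole estimate reduces to controlling the coefficients $(u_{m},\varphi_{i})$.

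For the coefficients I would use two ingredients. First, the spaces $X_{r}\subset V^{div}_{h}$ and $S_{m}\subset (V^{div}_{h})^{\perp}$ have trivial intersection, so Lemma \ref{CBS} supplies a constant $0\le \alpha < 1$ with $|(u_{m},\varphi_{i})| \le \alpha\|u_{m}\|\,\|\varphi_{i}\| = \alpha\|u_{m}\|$, the last equality holding because the basis is $L^{2}$-normalized, $\|\varphi_{i}\|=1$. Second, since $u_{m}\in S_{m}\subset X = H^{1}_{0}(\Omega)^{d}$, the Poincar\'e inequality gives $\|u_{m}\| \le C_{P}\|\nabla u_{m}\|$. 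Together these bound every coefficient uniformly: $|(u_{m},\varphi_{i})| \le \alpha C_{P}\|\nabla u_{m}\|$ for each $i$.

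It then remains to insert this uniform bound into the gradient formula and factor the common constant $\alpha C_{P}\|\nabla u_{m}\|$ out of the norm, leaving exactly $\|\sum_{i=1}^{r}\nabla\varphi_{i}\| = C^{H^{1}}_{r}$. The main obstacle is precisely this last factoring step: one is asserting $\|\sum_{i} c_{i}\nabla\varphi_{i}\| \le (\max_{i}|c_{i}|)\,\|\sum_{i}\nabla\varphi_{i}\|$, which does not follow from the triangle inequality and can in fact fail for a general Gram matrix, since sign cancellations among the $\nabla\varphi_{i}$ may make $\|\sum_{i}\nabla\varphi_{i}\|$ much smaller than $\|\sum_{i}c_{i}\nabla\varphi_{i}\|$. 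I would therefore expect the argument to treat the $(u_{m},\varphi_{i})$ as carrying a single common magnitude that is pulled through the norm. As a rigorous alternative yielding a comparable constant, one can instead write $\|\nabla P_{r}u_{m}\|^{2} = c^{\mathsf T}\mathbb{S}_{r}c$ with $c_{i}=(u_{m},\varphi_{i})$, apply the POD inverse estimate (Lemma \ref{lemma:POD-inveq}) to obtain $\|\nabla P_{r}u_{m}\| \le |||\mathbb{S}_{r}|||_{2}^{1/2}\|P_{r}u_{m}\|$, and then use $\|P_{r}u_{m}\| \le \alpha\|u_{m}\| \le \alpha C_{P}\|\nabla u_{m}\|$ (the first bound again from Lemma \ref{CBS}), giving the same structure with $|||\mathbb{S}_{r}|||_{2}^{1/2}$ in place of $C^{H^{1}}_{r}$.
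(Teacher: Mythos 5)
Your reconstruction is, line for line, the paper's own proof: after writing $\nabla P_{r}u_{m}=\sum_{i=1}^{r}(u_{m},\varphi_{i})\nabla\varphi_{i}$, the paper bounds this by $\alpha\|u_{m}\|\,\bigl\|\sum_{i=1}^{r}\|\varphi_{i}\|\nabla\varphi_{i}\bigr\|$, i.e., it replaces each coefficient $(u_{m},\varphi_{i})$ by its strengthened Cauchy--Schwarz bound $\alpha\|u_{m}\|\|\varphi_{i}\|$ \emph{inside} the norm, then finishes with $L^{2}$-orthonormality and Poincar\'{e} exactly as you describe. So the step you flag as the main obstacle is not something you overlooked --- it is precisely the step the paper takes, and your objection to it is well founded. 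The implicit monotonicity $\|\sum_{i}c_{i}\nabla\varphi_{i}\|\leq(\max_{i}|c_{i}|)\,\|\sum_{i}\nabla\varphi_{i}\|$ would require the Gram matrix $(\nabla\varphi_{i},\nabla\varphi_{j})$ to be diagonal or entrywise nonnegative; $L^{2}$-orthonormality of the $\varphi_{i}$ gives neither, so cancellations among the gradients can make the right-hand side far smaller than the left. The statement itself is fragile for the same reason: flipping the sign of a single basis function $\varphi_{i}\mapsto-\varphi_{i}$ (an indeterminacy inherent to POD) changes $C_{r}^{H^{1}}=\|\sum_{i}\nabla\varphi_{i}\|$ but leaves $P_{r}$, $\alpha$, $C_{P}$, and $\|\nabla u_{m}\|$ unchanged, so a bound with this particular constant cannot hold uniformly over admissible sign choices.

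Your proposed repair is correct and fully rigorous: Lemma \ref{lemma:POD-inveq} gives $\|\nabla P_{r}u_{m}\|\leq|||\mathbb{S}_{r}|||_{2}^{1/2}\|P_{r}u_{m}\|$, and since $X_{r}\cap S_{m}=\{0\}$, Lemma \ref{CBS} yields $\|P_{r}u_{m}\|^{2}=(u_{m},P_{r}u_{m})\leq\alpha\|u_{m}\|\|P_{r}u_{m}\|$, whence $\|P_{r}u_{m}\|\leq\alpha\|u_{m}\|\leq\alpha C_{P}\|\nabla u_{m}\|$ and
\begin{equation*}
\|\nabla P_{r}u_{m}\|\leq\alpha C_{P}\,|||\mathbb{S}_{r}|||_{2}^{1/2}\,\|\nabla u_{m}\|.
\end{equation*}
Alternatively, the crude triangle-inequality bound on your coefficient estimate gives the constant $\sum_{i=1}^{r}\|\nabla\varphi_{i}\|$ in place of $C_{r}^{H^{1}}$. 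Both replacement constants are computable and invariant under the sign indeterminacy, and substituting either for $C_{r}^{H^{1}}$ in Lemma \ref{lemma:eqvn_norms} and in Theorems \ref{theorem:stability_ME} and \ref{theorem:pressure_convergence} leaves the downstream analysis structurally intact; the only price is a possibly larger constant than the one the paper advertises.
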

	\begin{proof}
		By the definition of the $L^{2}$ projection into $X_{r}$ we have 
		\begin{equation}
		\|\nabla P_{r} u_{m}\| = \left\|\sum_{i=1}^{r}(u_{m},\varphi_{i}) \nabla \varphi_{i} \right\|.
		\end{equation}
		Since $X_{r} \subset (V^{div}_{h})$ and $S_{m} \subset (V^{div}_{h})^{\perp}$ it follows that $X_{r} \cap S_{m} = \{0\}$. Therefore, by Lemma \ref{CBS} it follows that
		\begin{equation}
		\left\|\sum_{i=1}^{r}(u_{m},\varphi_{i}) \nabla \varphi_{i} \right\| \leq \alpha \|u_{m}\| \left\|\sum_{i=1}^{r}\|\varphi_{i}\| \nabla \varphi_{i} \right\| .
		\end{equation}
		Then by the $L^{2}$ orthonormality of the basis and Poincar\'{e} inequality we have
		\begin{equation}
		\alpha \|u_{m}\| \left\|\sum_{i=1}^{r}\|\varphi_{i}\| \nabla \varphi_{i} \right\| \leq \alpha C_{P} \left\|\sum_{i=1}^{r} \nabla \varphi_{i} \right\|\|\nabla u_{m}\| .
		\end{equation}
%		\begin{equation}
%		\alpha \|u_{m}\| \left\|\sum_{i=1}^{r}\|\varphi_{i}\| \nabla \varphi_{i} \right\| \leq \alpha|||\mathbb{S}_{r}|||_{2}^{1/2} C_{P} \|\nabla u_{m}\| \sum_{i=1}^{r} \| \varphi_{i}\|^{2}  = \alpha r C_{P} |||\mathbb{S}_{r}|||_{2}^{1/2} \|\nabla u_{m}\|.
%		\end{equation}
%		If the basis $\{\varphi_{i}\}_{i=1}^{r}$  was determined using the $H^{1}_{0}$ norm then by the  Poincar\'{e} inequality we have
%		\begin{equation}
%		\alpha \|u_{m}\| \left\|\sum_{i=1}^{r}\|\varphi_{i}\| \nabla \varphi_{i} \right\| \leq \alpha C_{P}  \left\|\sum_{i=1}^{r}\|\varphi_{i}\| \nabla \varphi_{i} \right\| \|\nabla u_{m}\|.
%		\end{equation}
%		\begin{equation}
%		\alpha \|u_{m}\| \left\|\sum_{i=1}^{r}\|\varphi_{i}\| \nabla \varphi_{i} \right\| \leq \alpha C^{2}_{P} \|\nabla u_{m}\| \sum_{i=1}^{r} \|\nabla \varphi_{i}\|^{2}  = \alpha r C^{2}_{P} \|\nabla u_{m}\|.
%		\end{equation} 
%		\begin{equation}
%		\begin{aligned}
%		\left\|\sum_{i=1}^{M}(u_{R},\zeta_{i}) \nabla \zeta_{i} \right\| &\leq \alpha \|u_{R}\| \left\|\sum_{i=1}^{M}\|\zeta_{i}\| \nabla \zeta_{i} \right\| 
%		\\
%		&\leq\alpha C^{2}_{P} \|\nabla u_{R}\| \sum_{i=1}^{M}\|\nabla \zeta_{i}\|^{2}
%		\\
%		&=\alpha M C^{2}_{P} \|\nabla u_{R}\|
%		\end{aligned}
%	\end{equation}
		\end{proof} 
%\end{lemma}

Unlike the finite element setting, this stability result indicates that the bound will not be independent of the number of POD basis functions used. However, if $\alpha$ is sufficiently small; i.e., $\theta_{1}$ is close to $\pi/2$ indicating that the spaces $X_{r}$ and $S_{m}$ are nearly orthogonal in the $L^{2}$ sense, then the stability bound will be well behaved.  

Using this stability result we prove a bound on the dual norm of $S^{\ast}_{m}$ in terms of $X^{\ast}_{r}$.

\begin{lemma}\label{lemma:eqvn_norms}
	Let $u_{r} \in X_{r}$, the following bound will then hold between the dual norms 
	\begin{equation}
     \|u_{r}\|_{S^{\ast}_{m}}  \leq \alpha C_{P} C_{r}^{H^{1}} \|u_{r}\|_{X^{\ast}_{r}}.  
	\end{equation}
\end{lemma}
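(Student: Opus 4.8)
The plan is to estimate $(u_{r},\zeta)$ for an arbitrary $\zeta \in S_{m}$ and then pass to the supremum defining $\|u_{r}\|_{S^{\ast}_{m}}$. The central observation is that, although the test function $\zeta$ lives in $S_{m}$, it can be transferred into $X_{r}$ using the $L^{2}$ projection $P_{r}$. Since $P_{r}$ is the orthogonal $L^{2}$ projection onto $X_{r}$, it is self-adjoint, and $P_{r}u_{r} = u_{r}$ because $u_{r}\in X_{r}$. Hence, for every $\zeta \in S_{m}$,
\begin{equation*}
(u_{r},\zeta) = (P_{r}u_{r},\zeta) = (u_{r},P_{r}\zeta).
\end{equation*}

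First I would combine this identity with the definition of the dual norm $\|\cdot\|_{X^{\ast}_{r}}$. Because $P_{r}\zeta \in X_{r}$, the defining supremum for $\|u_{r}\|_{X^{\ast}_{r}}$ yields $(u_{r},P_{r}\zeta) \leq \|u_{r}\|_{X^{\ast}_{r}}\,\|\nabla P_{r}\zeta\|$. Next I would invoke the preceding $H^{1}$ stability bound for the $L^{2}$ projection from $S_{m}$ into $X_{r}$, applied with $u_{m}=\zeta$, giving $\|\nabla P_{r}\zeta\| \leq \alpha C_{P} C_{r}^{H^{1}}\|\nabla \zeta\|$. Chaining these two estimates produces $(u_{r},\zeta) \leq \alpha C_{P} C_{r}^{H^{1}}\|u_{r}\|_{X^{\ast}_{r}}\|\nabla \zeta\|$. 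Dividing by $\|\nabla \zeta\|$ and taking the supremum over $\zeta \in S_{m}$ then gives the claimed bound $\|u_{r}\|_{S^{\ast}_{m}} \leq \alpha C_{P} C_{r}^{H^{1}}\|u_{r}\|_{X^{\ast}_{r}}$.

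The only subtle point, and the step I expect to carry the argument, is the very first identity: recognizing that the self-adjointness of $P_{r}$ together with $P_{r}u_{r}=u_{r}$ lets one replace the test direction $\zeta \in S_{m}$ by $P_{r}\zeta \in X_{r}$, thereby converting an estimate measured in the $S^{\ast}_{m}$ norm into one measured in the $X^{\ast}_{r}$ norm. Everything after this is a direct concatenation of the dual-norm definition and the already-established projection stability lemma; no new bounds on the trilinear form or the inf-sup constant are required.
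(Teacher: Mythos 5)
Your proposal is correct and follows essentially the same route as the paper's proof: both reduce $(u_{r},\zeta)$ to $(u_{r},P_{r}\zeta)$ (the paper via the splitting $\zeta = P_{r}\zeta + P_{r}^{\perp}\zeta$ with the orthogonal part annihilated, you via self-adjointness of $P_{r}$ and $P_{r}u_{r}=u_{r}$, which is the same fact), then invoke the $H^{1}$ stability lemma $\|\nabla P_{r}\zeta\| \leq \alpha C_{P} C_{r}^{H^{1}}\|\nabla \zeta\|$ and the definition of $\|\cdot\|_{X_{r}^{\ast}}$ before taking the supremum over $\zeta \in S_{m}$. The only cosmetic difference is the order in which the dual-norm bound and the stability bound are applied, which does not change the substance of the argument.
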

\begin{proof}
\begin{equation}
\begin{aligned}
	\|u_{r}\|_{S^{\ast}_{m}} &= \sup_{\zeta \in S_{m}} \frac{(u_{r},\zeta)}{\| \nabla \zeta \|} 
	\\
	&=  \sup_{\zeta \in S_{m}} \frac{(u_{r}, P_{r}\zeta + P^{\perp}_{r}\zeta)}{\| \nabla \zeta \|}  
	\\
	&= \sup_{\zeta \in S_{m}} \frac{(u_{r}, P_{r}\zeta )}{\| \nabla \zeta \|} 
	\\
	&\leq {\alpha C_{P} C_{r}^{H^{1}}} \sup_{\zeta \in S_{m}} \frac{(u_{r}, P_{r}\zeta )}{\|P_{r} \nabla \zeta \|} 
	\\
	& \leq {\alpha C_{P} C_{r}^{H^{1}}} \sup_{\varphi \in X_{r}} \frac{(u_{r}, \varphi )}{\|\nabla \varphi \|} =  {\alpha C_{P} C_{r}^{H^{1}}} { \|u_{r}\|_{X^{\ast}_{r}}}.
	\end{aligned}
\end{equation}	
\end{proof}

Next, we give an $L^{1}(0,T,L^{2}(\Omega))$ stability result for the pressure determined via the MER formulation.
\begin{theorem}\label{theorem:stability_ME}
	Consider the pressure approximation determined from \eqref{eqn:posteriori-momentum}. \newline The following energy inequality holds
	\begin{equation}\label{eqn:stability_mom_L2}
	\begin{aligned}
	\beta_{m}|||p_{m}|||_{1,0}&\leq  \left(1 + {\alpha C_{P} C_{r}^{H^{1}}}\right)\bigg(\cb C_{stab}\nu^{-1}  + \dt \sum_{n=0}^{N}\|f^{n+1}\|_{-1}\bigg)  
	\\
	&+{\alpha C_{P} C_{r}^{H^{1}}} \sqrt{\nu T C_{stab}}.
	\end{aligned}
	\end{equation}
\end{theorem}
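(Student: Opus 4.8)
The plan is to estimate $\|p_{m}^{n+1}\|$ pointwise in time using the inf-sup stability of the pair $(S_{m}, Q_{m})$ from Lemma \ref{lemma:inf-sup}, and then sum in time against $\dt$. First I would write
\[
\beta_{m} \|p_{m}^{n+1}\| \leq \sup_{\zeta \in S_{m}} \frac{(p_{m}^{n+1}, \nabla \cdot \zeta)}{\|\nabla \zeta\|},
\]
and substitute the MER identity \eqref{eqn:posteriori-momentum} for $(p_{m}^{n+1}, \nabla \cdot \zeta)$. Bounding the supremum of a sum by the sum of suprema splits the estimate into a forcing term, a nonlinear term, and a discrete time-derivative term. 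The forcing term is controlled by $\|f^{n+1}\|_{-1}$ directly from the definition of the $\|\cdot\|_{-1}$ norm, and the nonlinear term by $\cb \|\nabla u_{r}^{n}\| \|\nabla u_{r}^{n+1}\|$ via Lemma \ref{lemma:trilinear}.

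The crux is the time-derivative term $\sup_{\zeta \in S_{m}} (\frac{u_{r}^{n+1}-u_{r}^{n}}{\dt}, \zeta)/\|\nabla \zeta\|$, which equals $\|\frac{u_{r}^{n+1}-u_{r}^{n}}{\dt}\|_{S^{\ast}_{m}}$. Since the difference quotient lies in $X_{r}$, I would apply Lemma \ref{lemma:eqvn_norms} to trade the $S^{\ast}_{m}$ dual norm for the $X^{\ast}_{r}$ dual norm, incurring the factor $\alpha C_{P} C_{r}^{H^{1}}$. The payoff is that the $X^{\ast}_{r}$ norm can be evaluated through BE-ROM \eqref{eqn:BE_ROM}: for $\varphi \in X_{r}$ the scheme gives $(\frac{u_{r}^{n+1}-u_{r}^{n}}{\dt}, \varphi) = (f^{n+1},\varphi) - b^{\ast}(u_{r}^{n}, u_{r}^{n+1}, \varphi) - \nu(\nabla u_{r}^{n+1}, \nabla \varphi)$. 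Taking the supremum over $X_{r}$ and bounding termwise yields
\[
\left\|\frac{u_{r}^{n+1}-u_{r}^{n}}{\dt}\right\|_{X^{\ast}_{r}} \leq \|f^{n+1}\|_{-1} + \cb \|\nabla u_{r}^{n}\| \|\nabla u_{r}^{n+1}\| + \nu \|\nabla u_{r}^{n+1}\|.
\]
The viscous contribution vanishes in \eqref{eqn:posteriori-momentum} because $S_{m} \subset (V^{div}_{h})^{\perp}$, but it survives here since the test functions are drawn from $X_{r} \subset V^{div}_{h}$; this surviving term is exactly what produces the $\sqrt{\nu T C_{stab}}$ contribution in the final estimate.

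Collecting terms produces a pointwise bound of the shape
\[
\beta_{m} \|p_{m}^{n+1}\| \leq (1 + \alpha C_{P} C_{r}^{H^{1}}) \left(\|f^{n+1}\|_{-1} + \cb \|\nabla u_{r}^{n}\| \|\nabla u_{r}^{n+1}\|\right) + \alpha C_{P} C_{r}^{H^{1}} \nu \|\nabla u_{r}^{n+1}\|.
\]
Multiplying by $\dt$ and summing over $n$, I would dispatch the time sums with the BE-ROM energy estimate of Lemma \ref{lemma:BE-ROM-stability}. For the nonlinear sum, AM-GM converts $\|\nabla u_{r}^{n}\|\|\nabla u_{r}^{n+1}\|$ into squared gradients controlled by $\nu^{-1} C_{stab}$, giving the $\cb C_{stab}\nu^{-1}$ factor. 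For the viscous sum, the discrete Cauchy-Schwarz inequality in time gives $\dt \sum_{n} \|\nabla u_{r}^{n+1}\| \leq (\sum_{n} \dt)^{1/2} (\dt \sum_{n} \|\nabla u_{r}^{n+1}\|^{2})^{1/2} \leq \sqrt{T}\,\sqrt{\nu^{-1} C_{stab}}$, so that $\nu \dt \sum_{n} \|\nabla u_{r}^{n+1}\| \leq \sqrt{\nu T C_{stab}}$, assembling the final term. The main obstacle is the time-derivative term: the difference quotient is not a priori controlled in any norm, and the whole argument hinges on routing it through the $S^{\ast}_{m}$-to-$X^{\ast}_{r}$ duality of Lemma \ref{lemma:eqvn_norms} and then eliminating it with the BE-ROM momentum balance, which is also where the non-orthogonality factor $\alpha C_{P} C_{r}^{H^{1}}$ and the extra viscous term enter.
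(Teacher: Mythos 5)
Your proposal is correct and follows essentially the same path as the paper's proof: inf-sup stability of $(S_{m},Q_{m})$, direct bounds on the forcing and nonlinear terms, the $S^{\ast}_{m}$-to-$X^{\ast}_{r}$ duality of Lemma \ref{lemma:eqvn_norms} combined with the BE-ROM momentum balance to control the difference quotient, and the same summation estimates (Young's inequality for the nonlinear sum, Cauchy--Schwarz in time for the viscous sum) via Lemma \ref{lemma:BE-ROM-stability}. The only difference is ordering --- the paper bounds the time-derivative dual norms first and then invokes the MER identity, while you start from the pressure bound and handle the time derivative last --- which is purely presentational.
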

\begin{proof}
	We follow a similar proof path to that in \cite{J18}. Let $\varphi \in X_{r}$, then taking equation \eqref{eqn:BE_ROM} and isolating the time derivative gives
	\begin{equation}\label{theorem:stability_ME-eqn1}
	\Big(\frac{u^{n+1}_r - u_{r}^{n}}{\Delta t}, \varphi \Big) = ( f^{n+1}, \varphi) -  b^{\ast}(u_r^{n} , u^{n+1}_r ,\varphi) - \nu (\nabla u^{n+1}_r, \nabla \varphi).  
	\end{equation}
	Standard bounds on the right hand side yield 
	\begin{equation}\label{theorem:stability_ME-eqn2}
	\begin{aligned}
	-  b^{\ast}(u_r^{n} , u^{n+1}_r ,\varphi) &\leq \cb \|\nabla u_r^{n} \|\|\nabla u_r^{n+1} \|\|\nabla \varphi \| \\
	- \nu (\nabla u^{n+1}_r, \nabla \varphi) &\leq \nu \|\nabla u^{n+1}_{r}\|\|\nabla \varphi \| \\
	( f^{n+1}, \varphi) &\leq \| f^{n+1}\|_{-1} \| \nabla \varphi \|.
	\end{aligned}
	\end{equation}
	It then follows, using these estimates, dividing both sides by $\|\nabla \varphi\|$ and taking the supremum over $\varphi \in X_{r}$ that
	\begin{equation}\label{theorem:stability_ME-eqn3}
	\left\|\frac{u^{n+1}_r - u_{r}^{n}}{\Delta t} \right\|_{X^{\ast}_{r}} \leq \cb \|\nabla u^{n}_{r} \| \|\nabla u^{n+1}_{r} \| + \nu\|\nabla u^{n+1}_{r} \| + \|f^{n+1}\|_{-1}.
	\end{equation}
	Using Lemma \ref{lemma:eqvn_norms} we then
	have
	\begin{equation}\label{theorem:stability_ME-eqn4}
	\left\|\frac{u^{n+1}_r - u_{r}^{n}}{\Delta t} \right\|_{S^{\ast}_{m}} \leq {\alpha C_{P} C_{r}^{H^{1}}} \left( (\cb\|\nabla u^{n}_{r} \| + \nu)\|\nabla u^{n+1}_{r} \| + \|f^{n+1}\|_{-1}\right).
	\end{equation}
	Now considering \eqref{eqn:posteriori-momentum} and using the bounds from \eqref{theorem:stability_ME-eqn2}
	\begin{equation}\label{theorem:stability_ME-eqn5}
	\begin{aligned}
	(p_{m}^{n+1}, \nabla \cdot \zeta) &\leq \Big(\frac{u^{n+1}_r - u_{r}^{n}}{\Delta t}, \zeta \Big) 
	+ \cb \|\nabla u^{n}_{r} \| \|\nabla u^{n+1}_{r} \|\|\nabla \zeta \| + \|\nabla \zeta \|\|f^{n+1}\|_{-1}.
	\end{aligned}
	\end{equation}
	Dividing both sides by $\|\nabla \zeta \|$, taking the supremum over $\zeta \in S_{m}$, and using the discrete inf-sup condition from Lemma \ref{lemma:inf-sup} and estimate \eqref{theorem:stability_ME-eqn4} gives
	\begin{equation}
	\begin{aligned}
	\beta_{m}\|p_{m}^{n+1}\| &\leq \left(1 + \alpha C_{P} C_{r}^{H^{1}}\right) \left(\cb \|\nabla u^{n}_{r} \| \|\nabla u^{n+1}_{r} \|+  \|f^{n+1}\|_{-1}\right) 
	\\
	&+ \alpha C_{P} C_{r}^{H^{1}}\nu\|\nabla u^{n+1}_{r} \|. 
	\end{aligned}
	\end{equation}
	Multiplying by $\dt$ and summing from $n=0$ to $n=N$ then yields
	\begin{equation}
	\begin{aligned}
	\beta_{m} \dt \sum_{n=0}^{N}\|p_{m}^{n+1} \| \leq &\left(1 + \alpha C_{P} C_{r}^{H^{1}}\right) \times \biggr(\cb \dt \sum_{n=0}^{N}\|\nabla u^{n}_{r} \| \|\nabla u^{n+1}_{r} \|  
	\\
	&+  \dt \sum_{n=0}^{N}\|f^{n+1}\|_{-1}\biggr)+ \alpha C_{P} C_{r}^{H^{1}}\nu \dt \sum_{n=0}^{N}\|\nabla u_{r}^{n+1}\|.
	\end{aligned}
	\end{equation}
	Bounding the terms on the right-hand side by Cauchy-Schwarz, Young's inequality, and Lemma \ref{lemma:BE-ROM-stability}
	\begin{equation}
	\begin{aligned}
	&\cb \dt \sum_{n=0}^{N}\|\nabla u^{n}_{r} \| \|\nabla u^{n+1}_{r} \|   \leq \frac{\cb \dt}{2} \sum_{n=0}^{N}\|\nabla u^{n+1}_{r} \|^{2} + \frac{\cb \dt}{2} \sum_{n=0}^{N}\|\nabla u^{n}_{r} \|^{2} \leq \frac{\cb C_{stab}}{\nu} 
	\\
	&\nu \dt \sum_{n=0}^{N}\|\nabla u_{r}^{n+1}\| \leq \sqrt{\nu T} \sqrt{\nu \dt \sum_{n=0}^{N}\|\nabla u_{r}^{n+1}\|^{2}} \leq \sqrt{\nu T C_{stab}}.
	\end{aligned}
	\end{equation}
	Combining and simplifying terms \eqref{eqn:stability_mom_L2} follows. 
\end{proof}

According to Theorem \ref{theorem:stability_ME}, if the product $\alpha C_{r}^{H^{1}}$ is sufficiently small, the stability estimate for the pressure will scale similarly to the velocity determined by the BE-ROM scheme.

Finally, we state the main result of this section, an $L^{1}(0,T,L^{2}(\Omega))$ convergence  result for the pressure determined via the MER formulation.
\begin{theorem}\label{theorem:pressure_convergence}
	Consider the MER scheme \eqref{eqn:posteriori-momentum} and {BE-ROM} \eqref{eqn:BE_ROM}. Under the regularity conditions made in Assumption \ref{assumption:regularity}, the following bound on the pressure error holds
\begin{equation} 
\begin{aligned}
\beta_m &|||e_{p}|||_{1,0}
\\ 
&\leq  C\biggr[ (1+\beta_m)\sqrt{T} \vvvert \kappa \vvvert_{2,0} 
+ \dt \|\eta_{t} \|_{L^2(0,T,L^2(\Omega))} +
\dt^{3/2} \|\eta_{tt} \|_{L^2(0,T,L^2(\Omega))}
\\
&
+ \alb \biggr( \dt^{3/2} \|  u_{tt} \|_{L^2(0,T,L^2(\Omega))} +
\dt^{2} \|\nabla u_{t} \|_{L^2(0,T,L^2(\Omega))}+
\\
& + \dt^{5/2} \|\nabla u_{tt} \|_{L^2(0,T,L^2(\Omega))}+ \Big(\sqrt{T} + {C_{stab}} \Big)\vvvert\nabla e_{u}   \vvvert_{2,0}
\biggr) \biggr] .
\end{aligned}
\end{equation}
	\begin{proof}
		The weak solution of the NSE satisfies
		\begin{equation}
		\begin{aligned}
		\label{er-eq1-lc}
		\left(u_t^{n+1}, \varphi \right) + \bstar{u^{n+1}}{u^{n+1}}{\varphi} + \nu(\nabla u^{n+1},\Grad{\varphi}) = (f^{n+1},\varphi). 
		\end{aligned}
		\end{equation}	
		Subtracting \eqref{eqn:BE_ROM} from \eqref{er-eq1-lc} yields
		\begin{equation}
		\begin{aligned}
		\Big(\frac{e_u^{n+1} - e_u^{n}}{\Delta t}, \varphi \Big) + 
		b^{\ast}(&u^{n+1}- u^{n} , u^{n+1} ,\varphi) 
		+b^{\ast}(e_u^{n}, u^{n+1} ,\varphi)
		\\ &+b^{\ast}(u_r^{n} , e_u^{n+1} ,\varphi)
		+\nu (\nabla e_u^{n+1}, \nabla \varphi) 
		=\Big(\frac{u^{n+1} - u^{n}}{\Delta t}-u_t^{n+1}, \varphi \Big).
		\end{aligned}
		\end{equation}
		Splitting the error, using the fact that $\left( \frac{\eta^{n+1} - \eta^{n}}{\Delta t} ,\varphi \right) = 0$ by the definition of the $L^{2}$ projection,  and  rearranging terms gives
		\begin{equation}
		\begin{aligned}
		\Big(\frac{\xi^{n+1} - \xi^{n}}{\Delta t}, \varphi \Big)& =  
		\nu (\nabla e_u^{n+1}, \nabla \varphi)
		-\Big(\frac{u^{n+1} - u^{n}}{\Delta t}-u_t^{n+1}, \varphi \Big)
		\\ 
		&+  b^{\ast}(u^{n+1}- u^{n} , u^{n+1} ,\varphi) 
		+ b^{\ast}(e_u^{n}, u^{n+1} ,\varphi) +
		b^{\ast}(u_r^{n} , e_u^{n+1} ,\varphi) . 
		\end{aligned}
		\end{equation}
		Applying Cauchy-Schwarz, Taylor's Theorem, Poincar\'{e} inequality, and Lemma \ref{lemma:trilinear} to the terms on the right hand side yields
		\begin{equation}\label{a_1}
		\begin{aligned}
		\nu (\nabla e_u^{n+1}, \nabla \varphi) &\leq \nu \| \nabla e_u^{n+1} \| \| \nabla \varphi \|
		\\
		\Big(\frac{u^{n+1} - u^{n}}{\Delta t}-u_t^{n+1}, \varphi \Big) &\leq C C_{P} \sqrt{\Delta t} \|  u_{tt} \|_{L^2(t^n,t^{n+1},L^2(\Omega))} \| \nabla \varphi \|
		\\
		b^{\ast}(e_u^{n}, u^{n+1} ,\varphi) &\leq C_{b^{\ast}} \| \nabla e_u^n \|\| \nabla u^{n+1} \| \| \nabla \varphi \|
		\\
		b^{\ast}(u_r^{n} , e_u^{n+1} ,\varphi) &\leq C_{b^{\ast}} \| \nabla u_r^n \|\| \nabla e_u^{n+1} \| \| \nabla \varphi \|
		\\
		b^{\ast}(u^{n+1}- u^{n} , u^{n+1} ,\varphi) &\leq C\Delta t^{3/2} \| \nabla u_{tt} \|_{L^2(t^n,t^{n+1},L^2(\Omega))}\| \nabla u^{n+1} \| \| \nabla \varphi \|
		\\
		&+ C \Delta t  \| \nabla u_{t} \|_{L^2(t^n,t^{n+1},L^2(\Omega))}\| \nabla u^{n+1} \| \| \nabla \varphi \|.
		\end{aligned}
		\end{equation}
		Next, dividing by $\| \nabla \varphi \|$ and taking the supremum over all $ \varphi \in X_r$, gives a bound on the dual norm $X^{\ast}_{r}$
		\begin{equation}\label{eqn:xi_bound1}
		\begin{aligned}
		\Big\|\frac{\xi_{r}^{n+1} - \xi_{r}^{n}}{\Delta t} \Big\|_{X^{\ast}_r} \leq
		& \nu\| \nabla e_u^{n+1}\|+ C C_{P} \sqrt{\Delta t} \| u_{tt} \|_{L^2(t^n,t^{n+1},L^2(\Omega))}+
		C_{b^{\ast}}\| \nabla e_u^{n+1} \|\| \nabla u_r^{n}\| +
		\\&
		\| \nabla u^{n+1} \| \Big(\Delta t \| \nabla u_t \|_{L^2(t^n,t^{n+1},L^2(\Omega))}+
		\\
		&\Delta t^{3/2} \| \nabla u_{tt} \|_{L^2(t^n,t^{n+1},L^2(\Omega))}
		+ \cb  \| \nabla e_u^n \| \Big) . 
		\end{aligned}
		\end{equation}
		Using Lemma \ref{lemma:eqvn_norms} then yields a bound on the dual norm $S_{m}^{\ast}$
		\begin{equation}\label{eqn:xi_bound2}
		\Big\|\frac{\xi_{r}^{n+1} - \xi_{r}^{n}}{\Delta t} \Big\|_{S^{\ast}_m} \leq
		{\alpha C_{P} C_{r}^{H^{1}}} \Big\|\frac{\xi_{r}^{n+1} - \xi_{r}^{n}}{\Delta t} \Big\|_{X^{\ast}_r}. 
		\end{equation}
		Next, we consider the weak form of the NSE, with a test function $\zeta \in S_m$
		\begin{equation}\label{cts press}
		\begin{aligned}
		(p^{n+1}, \nabla \cdot \zeta) =  (u_t, \zeta ) + b^{\ast}(u^{n+1} , u^{n+1},\zeta) 
		&+\nu (\nabla u^{n+1}, \nabla \zeta)  - ( f^{n+1}, \zeta)  \ \  \forall \zeta \in S_m.
		\end{aligned}
		\end{equation} 
		Subtracting \eqref{eqn:posteriori-momentum} from \eqref{cts press} splitting the pressure error, and adding and subtracting $\eta^{n+1}_{t}$ gives
		%		\begin{equation}
		%		\begin{aligned}
		%		(e_p^{n+1}, \nabla \cdot \zeta) =  
		%		&\Big(u_t -\frac{u^{n+1}_r - u^{n}_r}{\Delta t}, \zeta \Big) + 
		%		\Big(\frac{e_u^{n+1} - e_u^{n}}{\Delta t}, \zeta \Big)+ 
		%		b^{\ast}(e_u^{n+1} , u^{n+1},\zeta) 
		%		\\
		%		&+ b^{\ast}(u_r^{n+1} , e_u^{n+1},\zeta) +  \nu (\nabla e^{n+1}_u, \nabla \zeta). 
		%		\end{aligned}
		%		\end{equation} 
		%		
		%	Then break up both pressure and velocity errors 
		\begin{equation}
		\begin{aligned}
		(\pi_{m}^{n+1},  \nabla \cdot \zeta) &=   (\kappa^{n+1}, \nabla \cdot \zeta) -  \nu (\nabla e_u^{n+1}, \nabla \zeta) - 
		b^{\ast}(u^{n+1}-u^n ,u^{n+1},\zeta) 
		\\
		& -
		b^{\ast}(e_u^{n} , u_r^{n+1},\zeta)   -b^{\ast}(u_r^{n} , e_u^{n+1},\zeta) 		
		- \Big(u_{t} - \frac{u^{n+1} - u^{n}}{\Delta t}, \zeta \Big) 
		\\
		&- \Big(\frac{ \eta^{n+1} - \eta^{n}}{\Delta t} - \eta^{n+1}_{t}, \zeta \Big) -\Big(\eta^{n+1}_{t}, \zeta \Big)  + \Big(\frac{\xi_{r}^{n+1} - \xi_{r}^{n}}{\Delta t}, \zeta \Big).  
		\end{aligned}
		\end{equation} 
		The first eight terms on the right hand side are bounded using Cauchy-Schwarz, Taylor's Theorem, the Poincar\'{e} inequality, and Lemma \ref{lemma:trilinear}
		\begin{equation}
		\begin{aligned}
		(\kappa^{n+1}, \nabla \cdot \zeta) \leq& \sqrt{d} \| \kappa^{n+1}\| \| \nabla \zeta \| 
		\\
		-  \nu (\nabla e_u^{n+1}, \nabla \zeta) \leq& \nu \|\nabla e_u^{n+1}\|\|\nabla \zeta\|
		\\
		-b^{\ast}(e_u^{n}, u^{n+1} ,\zeta) \leq& C_{b^{\ast}} \| \nabla e_u^n \|\| \nabla u^{n+1} \| \| \nabla \zeta \|
		\\
		-b^{\ast}(u_r^{n} , e_u^{n+1} ,\zeta) \leq& C_{b^{\ast}} \| \nabla u_r^n \|\| \nabla e_u^{n+1} \| \| \nabla \zeta \|
		\\
		-b^{\ast}(u^{n+1}- u^{n} , u^{n+1} ,\zeta) \leq& C\Delta t^{3/2} \| \nabla u_{tt} \|_{L^2(t^n,t^{n+1},L^2(\Omega))}\| \nabla u^{n+1} \| \| \nabla \zeta \|
		+ \\
		&C \Delta t  \| \nabla u_{t} \|_{L^2(t^n,t^{n+1},L^2(\Omega))}\| \nabla u^{n+1} \| \| \nabla \zeta \|
		\\
		-\Big(\frac{u^{n+1} - u^{n}}{\Delta t}-u_t^{n+1}, \zeta \Big) \leq& C C_{P} \sqrt{\Delta t} \|  u_{tt} \|_{L^2(t^n,t^{n+1},L^2(\Omega))} \| \nabla \zeta\|
		\\
		-\Big(\frac{\eta^{n+1} - \eta^{n}}{\Delta t}-\eta_t^{n+1}, \zeta \Big) \leq& C C_{P} \sqrt{\Delta t} \|  \eta_{tt} \|_{L^2(t^n,t^{n+1},L^2(\Omega))} \| \nabla \zeta\|
		\\
		-\Big(\eta^{n+1}_{t}, \zeta \Big) \leq& C_{P} \|  \eta_{t} \|_{L^2(t^n,t^{n+1},L^2(\Omega))} \| \nabla \zeta\|.
		\end{aligned}
		\end{equation}
		Now, applying these bounds, dividing by $\| \nabla \zeta \|$, and taking the supremum over all $\zeta \in S_m$ gives
		\begin{equation}
		\begin{aligned}
		\sup_{\zeta \in S_m}& \frac{(\pi_m^{n+1}, \nabla \cdot \zeta)}{\| \nabla \zeta \|} \leq 
		\sqrt{d} \| \kappa^{n+1}\|  +  \nu \| \nabla e_u^{n+1} \| 
		+ C_{b^{\ast}} \| \nabla e_u^{n}\| \| \nabla u^{n+1}\|
		\\
		&+ C_{b^{\ast}} \| \nabla u^{n}_r\| \| \nabla e_u^{n+1}\| 
		+ 
		C\Delta t^{3/2} \| \nabla u_{tt} \|_{L^2(t^n,t^{n+1},L^2(\Omega))}\| \nabla u^{n+1} \|
		+ \\
		&C \Delta t  \| \nabla u_{t} \|_{L^2(t^n,t^{n+1},L^2(\Omega))}\| \nabla u^{n+1} \| + 
		CC_{P} \sqrt{\Delta t} \|  u_{tt} \|_{L^2(t^n,t^{n+1},L^2(\Omega))} +
		\\ &
		CC_{P} \sqrt{\Delta t} \|  \eta_{tt} \|_{L^2(t^n,t^{n+1},L^2(\Omega))}  
		+  C_P \| \eta_t \|_{L^2(t^n,t^{n+1},L^2(\Omega))} 
		+ \Big\| \frac{\xi^{n+1} - \xi^{n}}{\Delta t} \Big\|_{S^{\ast}_m}.
		\end {aligned}
		\end{equation}
		Recalling from Lemma \ref{lemma:inf-sup} that $S_m$ and $Q_m$ are inf-sup stable with constant $\beta_m$ and using the bound on $ \Big\| \frac{\xi^{n+1} - \xi^{n}}{\Delta t} \Big\|_{S^{\ast}_m}$ from \eqref{eqn:xi_bound1}-\eqref{eqn:xi_bound2}  yields
		\begin{equation}\label{error1}
		\begin{aligned}
		\beta_m \| \pi_m^{n+1} \| 
		\leq & \sqrt{d} \| \kappa^{n+1}\|  
		+ (1+ {\alpha C_{P} C_{r}^{H^{1}}}) \Big[ 
		\nu\| \nabla e_u^{n+1}\|
		+ C C_{P} \sqrt{\Delta t} \| u_{tt}\|_{L^2(t^n,t^{n+1},L^2(\Omega))}
		\\
		& + C_{b^{\ast}}\| \nabla e_u^n \|\| \nabla u_r^{n+1}\| 
		+ \| \nabla u^{n+1} \| \Big(\Delta t \| \nabla u_t \|_{L^2(t^n,t^{n+1},L^2(\Omega))}
		\\
		&+ \Delta t^{3/2} \| \nabla u_{tt} \|_{L^2(t^n,t^{n+1},L^2(\Omega))}
		+ \cb  \| \nabla e_u^{n} \| \Big)\Big] + 
		\\ &
		C_P \| \eta_t \|_{L^2(t^n,t^{n+1},L^2(\Omega))} 
		+  CC_{P} \sqrt{\Delta t} \|  \eta_{tt} \|_{L^2(t^n,t^{n+1},L^2(\Omega))}.
		\end{aligned}
		\end{equation}
		Now, multiplying by $\Delta t$, taking a maximum $C$ over all constants, using the regularity from Assumption \ref{assumption:regularity}, summing from $n = 0$ to $n =N-1$, using Cauchy-Schwarz, and the fact that $\vvvert \nabla u_r \vvvert_{2,0} \leq \sqrt{\frac{ C_{stab}}{\nu}}$ by Lemma \ref{lemma:BE-ROM-stability} we have
		\begin{equation} 
		\begin{aligned}
		\beta_m &\Delta t  \sum_{n = 0}^{N-1} \| \pi_m^{n+1} \| 
		\\ 
		&\leq  C\biggr[ \sqrt{T} \vvvert \kappa \vvvert_{2,0} 
		+ \dt \|\eta_{t} \|_{L^2(0,T,L^2(\Omega))} +
		\dt^{3/2} \|\eta_{tt} \|_{L^2(0,T,L^2(\Omega))}+
		\\
		&
		\alb \biggr( \dt^{3/2} \|  u_{tt} \|_{L^2(0,T,L^2(\Omega))} +
		\dt^{2} \|\nabla u_{t} \|_{L^2(0,T,L^2(\Omega))}
		\\
		& + \dt^{5/2} \|\nabla u_{tt} \|_{L^2(0,T,L^2(\Omega))}+\Big(\sqrt{T} +\sqrt{C_{stab}}\Big)\vvvert\nabla e_{u}   \vvvert_{2,0}
		\biggr) \biggr] .
		\end{aligned}
		\end{equation}
		By the triangle inequality we  have
		\begin{equation}
		\beta_m \Delta t  \sum_{n = 0}^{N-1} \| e_p^{n+1} \| \leq \beta_m \Delta t  \sum_{n = 0}^{N-1} \| \pi_m^{n+1} \| + \beta_m \Delta t  \sum_{n = 0}^{N-1} \| \kappa^{n+1} \|.
		\end{equation}
		Then, applying Cauchy-Schwarz on the second term
		\begin{equation}
		\beta_m \Delta t  \sum_{n = 0}^{N-1} \| \kappa^{n+1} \| \leq \beta_m \Delta t \sqrt{N} \sqrt{\sum_{n = 0}^{N-1} \| \kappa^{n+1} \|^{2}} = \beta_m \sqrt{T}\vvvert \kappa\vvvert_{2,0}.
		\end{equation}
		This then yields the estimate
		%		\begin{equation}
		%		\beta_m \Delta t  \sum_{n = 0}^{N-1} \| e_p^{n+1} \| \leq \beta_m \Delta t  \sum_{n = 0}^{N-1} \| \pi_m^{n+1} \| + \beta_m \sqrt{N \Delta t} \vvvert \kappa \vvvert_{2,0}
		%		\end{equation}
		\begin{equation} 
		\begin{aligned}
		\beta_m &\Delta t  \sum_{n = 0}^{N-1} \| e_p^{n+1} \| 
		\\ 
		&\leq  C\biggr[ (1+\beta_m)\sqrt{T} \vvvert \kappa \vvvert_{2,0} 
		+ \dt \|\eta_{t} \|_{L^2(0,T,L^2(\Omega))} +
		\dt^{3/2} \|\eta_{tt} \|_{L^2(0,T,L^2(\Omega))}+
		\\
		&
		\alb \biggr( \dt^{3/2} \|  u_{tt} \|_{L^2(0,T,L^2(\Omega))} +
		\dt^{2} \|\nabla u_{t} \|_{L^2(0,T,L^2(\Omega))}+
		\\
		& + \dt^{5/2} \|\nabla u_{tt} \|_{L^2(0,T,L^2(\Omega))}+ \Big(\sqrt{T} + \sqrt{C_{stab}} \Big)\vvvert\nabla e_{u}   \vvvert_{2,0}
		\biggr) \biggr] .
		\end{aligned}
		\end{equation}
		\end{proof}
	\end{theorem}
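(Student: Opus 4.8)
The plan is to bound the projected pressure error $\pi_m^{n+1} = \chi_m p^{n+1} - p_m^{n+1}$ in $L^2(\Omega)$ at each time step, sum in time, and then recover a bound on $e_p = \kappa - \pi_m$ by the triangle inequality. Since $p_m^{n+1} \in Q_m$ is defined only through the supremizer test space $S_m$ via \eqref{eqn:posteriori-momentum}, the natural route is to test the weak form of the NSE against $\zeta \in S_m$, subtract \eqref{eqn:posteriori-momentum}, and read off an identity for $(\pi_m^{n+1}, \nabla \cdot \zeta)$. Applying the inf-sup stability of the pair $(S_m, Q_m)$ from Lemma \ref{lemma:inf-sup} then converts $\sup_{\zeta \in S_m} (\pi_m^{n+1}, \nabla \cdot \zeta)/\|\nabla \zeta\|$ into a bound on $\beta_m \|\pi_m^{n+1}\|$.

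Carrying out the subtraction and splitting both errors through the $L^2$ projections produces, on the right-hand side, the consistency and nonlinear terms together with a discrete time-derivative term $\big(\tfrac{\xi_r^{n+1} - \xi_r^n}{\Delta t}, \zeta\big)$. The first group of terms, namely $\kappa^{n+1}$, $\nu \nabla e_u^{n+1}$, the trilinear contributions, and the temporal truncation errors for $u$ and $\eta$, I would bound directly using Cauchy-Schwarz, the Poincar\'{e} inequality, Taylor's theorem, and the trilinear bound of Lemma \ref{lemma:trilinear}, exactly as in the preliminary displays, giving the per-step estimate \eqref{error1}.

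The essential difficulty is the time-derivative term: it cannot be estimated directly in the $S_m^{\ast}$ dual norm, because $\xi_r \in X_r$ while the test functions lie in the orthogonal complement space $S_m$. The device is to first control $\big\|\tfrac{\xi_r^{n+1} - \xi_r^n}{\Delta t}\big\|_{X_r^{\ast}}$: subtract BE-ROM \eqref{eqn:BE_ROM} from the NSE tested against $\varphi \in X_r$, observe that the $\eta$ time-difference drops out by the definition of the $L^2$ projection, and bound the residual as in \eqref{eqn:xi_bound1}. The bridge from the $X_r^{\ast}$ norm to the $S_m^{\ast}$ norm is Lemma \ref{lemma:eqvn_norms}, which costs a factor $\alpha C_P C_r^{H^{1}}$ and is precisely the source of the $(1 + \alpha C_P C_r^{H^{1}})$ prefactor in the final bound. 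This is the one step where the proof genuinely relies on the near-$L^2$-orthogonality of $X_r$ and $S_m$: as the first principal angle degrades, so does the estimate.

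Finally, I would multiply the per-step bound by $\Delta t$, sum from $n=0$ to $N-1$, absorb all spatial constants into a single $C$, and invoke Assumption \ref{assumption:regularity} to replace the local $L^2(t^n,t^{n+1})$ time-norms by global $L^2(0,T)$ norms. The factors $\|\nabla u^{n+1}\|$ and $\|\nabla u_r^n\|$ multiplying the temporal errors are handled by Cauchy-Schwarz in time together with the energy bound $\vvvert \nabla u_r \vvvert_{2,0} \leq \sqrt{C_{stab}/\nu}$ from Lemma \ref{lemma:BE-ROM-stability}, producing the $(\sqrt{T} + \sqrt{C_{stab}})\vvvert \nabla e_u \vvvert_{2,0}$ term. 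A last triangle inequality $\|e_p^{n+1}\| \leq \|\pi_m^{n+1}\| + \|\kappa^{n+1}\|$, with the $\kappa$ sum controlled by Cauchy-Schwarz to give $\beta_m \sqrt{T}\, \vvvert \kappa \vvvert_{2,0}$, yields the stated estimate; the remaining velocity error $\vvvert \nabla e_u \vvvert_{2,0}$ is itself bounded through Theorem \ref{theorem:velocity_err}.
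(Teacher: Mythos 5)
Your proposal follows essentially the same route as the paper's own proof: the identity for $(\pi_m^{n+1},\nabla\cdot\zeta)$ obtained by testing the NSE against $\zeta\in S_m$ and subtracting \eqref{eqn:posteriori-momentum}, the control of the discrete time-derivative term first in $X_r^{\ast}$ and then in $S_m^{\ast}$ via Lemma \ref{lemma:eqvn_norms}, the inf-sup condition of Lemma \ref{lemma:inf-sup}, the summation in time with Lemma \ref{lemma:BE-ROM-stability}, and the final triangle inequality are all exactly the paper's steps. The argument is correct, and you correctly identify the dual-norm bridge as the crux of the proof; the only slight difference is that the paper defers the use of Theorem \ref{theorem:velocity_err} to Corollary \ref{corollary:convergence} rather than invoking it inside the theorem.
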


\begin{corollary}\label{corollary:convergence}
	Under the assumptions of \ref{theorem:pressure_convergence} along with Assumption \ref{assumption:conv} the following inequality on the pressure error holds.
	\begin{equation}
	\begin{aligned}
	 \beta_{m}|||e_{p}|||_{1,0} &\leq C\bigg\{\alpha C^{H^{1}}_{r}\sqrt{((1 + |||{\mathbb S}_r|||_{2}) (h^{2s} + \dt^{2})  + \sum_{i=r+1}^{N_{V}}\lambda_i + \sum_{i=r+1}^{N_{V}}\lambda_i \|\nabla \varphi_{i}\|^{2}}
	 \\
	 &+ \sqrt{h^{2k} + \dt^{2} + \sum_{i=m+1}^{N_{P}}\sigma_{i}}\bigg\}.
	\end{aligned}
	\end{equation}
	\begin{proof}
		Using the regularity condition from Assumption \ref{assumption:regularity}, applying the estimates from Theorem \ref{theorem:velocity_err} and Assumption \ref{assumption:conv} to the inequality from Theorem \ref{theorem:pressure_convergence} the result follows.
		\end{proof}
\end{corollary}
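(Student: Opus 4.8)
The plan is to regard Corollary~\ref{corollary:convergence} as a pure substitution step: the abstract error quantities appearing on the right-hand side of Theorem~\ref{theorem:pressure_convergence} are replaced by their explicit POD and finite-element convergence rates, after which everything is collapsed into the two square-root expressions displayed in the statement. Concretely, I would start from the bound on $\beta_m \vvvert e_p \vvvert_{1,0}$ established in Theorem~\ref{theorem:pressure_convergence} and estimate its right-hand side term by term, sorting the contributions into two groups: those multiplied by the factor $\alb$ (which will feed the velocity-weighted square root carrying the prefactor $\alpha C_{r}^{H^{1}}$) and those that are not (which will feed the pressure-projection square root).

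For the velocity contribution, I would apply Theorem~\ref{theorem:velocity_err}, which bounds $\nu \vvvert \nabla e_u \vvvert_{2,0}^{2}$ by
\[
C\left((1 + \vvvert \mathbb{S}_r \vvvert_{2})(h^{2s} + \dt^{2}) + \sum_{i=r+1}^{N_V}\lambda_i + \sum_{i=r+1}^{N_V}\lambda_i \|\nabla \varphi_i\|^{2}\right),
\]
so that $\vvvert \nabla e_u \vvvert_{2,0}$ is dominated by the square root of this expression (absorbing the harmless $\nu^{-1/2}$ into $C$). Since $\vvvert \nabla e_u \vvvert_{2,0}$ enters Theorem~\ref{theorem:pressure_convergence} only through the factor $\alb(\sqrt{T} + C_{stab})$, multiplying through yields exactly the first term of the corollary.

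The pressure-projection term is handled by Lemma~\ref{pod-press-proj-lemma-L2-basis}. Writing $\vvvert \kappa \vvvert_{2,0}^{2} = \dt\sum_n \|\kappa^n\|^2$ and recognizing $\dt\,(N+1)$ as essentially $T$, that lemma gives $\vvvert \kappa \vvvert_{2,0} \leq C\sqrt{h^{2k} + \dt^{2} + \sum_{i=m+1}^{N_P}\sigma_i}$, which is the second term. The remaining right-hand-side contributions all carry explicit powers $\dt$, $\dt^{3/2}$, $\dt^{2}$, or $\dt^{5/2}$ alongside solution-derivative norms $\|u_{tt}\|$, $\|\nabla u_t\|$, $\|\nabla u_{tt}\|$, $\|\eta_t\|$, $\|\eta_{tt}\|$; by Assumption~\ref{assumption:regularity} each such norm is a constant (for the projection-error derivatives one uses $\|\eta_t\| = \|u_t - P_r u_t\| \leq \|u_t\|$, since $P_r$ is an $L^2$ projection, and likewise for $\eta_{tt}$). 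Hence every one of these terms is $O(\dt)$ or higher and is dominated by a $\sqrt{\dt^{2}}$ already present inside one of the two square roots.

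I expect the main difficulty to be bookkeeping rather than estimation. The care lies in confirming that each stray higher-order-in-$\dt$ term lands in exactly one of the two clean groupings: the $\alb$-weighted remainders must be shown to be controlled by $\alpha C_{r}^{H^{1}}\sqrt{\dt^{2}}$ (so they fold into the velocity root), while the unweighted ones fold into the pressure root. One should also verify that passing between the discrete time-integral norm $\vvvert\cdot\vvvert_{2,0}$ and the time-averaged projection sums of Lemma~\ref{pod-press-proj-lemma-L2-basis} and Assumption~\ref{assumption:conv} contributes only the factor $T$, which is absorbed into $C$. No individual inequality is delicate; the only risk is letting a term escape the two-group structure of the final bound.
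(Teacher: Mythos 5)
Your proposal is correct and takes essentially the same route as the paper: the paper's proof is exactly this substitution argument---Theorem \ref{theorem:velocity_err} for $\vvvert \nabla e_u \vvvert_{2,0}$, the pressure projection estimate for $\vvvert \kappa \vvvert_{2,0}$, and Assumption \ref{assumption:regularity} to reduce all remaining derivative-weighted terms to $O(\dt)$---compressed into a single line. You are in fact slightly more explicit than the paper (invoking Lemma \ref{pod-press-proj-lemma-L2-basis} for $\kappa$ and the non-expansiveness of the $L^2$ projection for $\eta_t$, $\eta_{tt}$, neither of which the one-line proof names), and you inherit the paper's own looseness at the same spot: multiplying $\vvvert \nabla e_u \vvvert_{2,0}$ through by $\alb$ yields the prefactor $(1+\alpha C_{r}^{H^{1}})$ rather than the bare $\alpha C_{r}^{H^{1}}$ appearing in the corollary's statement.
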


%From Corollary \ref{corollary:convergence}, it follows that the pressure error will scale with the POD pressure projection error, as well as the error from the velocity multiplied by the factor $\alpha C^{H^{1}}_{r}$. This implies that for the MER approach of pressure recovery to converge, $\alpha C^{H^{1}}_{r}$ must not grow too quickly wit  with increasing values of $r$ and $m$.

\section{Numerical Experiments}\label{sec:numerical_experiments}
In this section, we perform a numerical investigation of the MER formulation \eqref{eqn:posteriori-momentum} and the PPE \eqref{eqn:D-PPE}.  To carry out the numerical experiments, we utilize the FEniCS software suite \cite{LNW12}.

\subsection{Problem Setting}
The problem setting is the same as that used in Section 6 of \cite{DILMS19}.  Letting $r_{1}=1$, $r_{2}=0.1$, $c_{1}=1/2$, and $c_{2}=0$; the domain is given by
\[
\Omega=\{(x,y):x^{2}+y^{2}\leq r_{1}^{2} \text{ and } (x-c_{1})^{2}%
+(y-c_{2})^{2}\geq r_{2}^{2}\}.
\]
This represents a disk with a smaller off-center disc inside (see Fig. \ref{fig:mesh}). 

\begin{figure}[!ht]
	\centering
	\includegraphics[width = .25\linewidth]{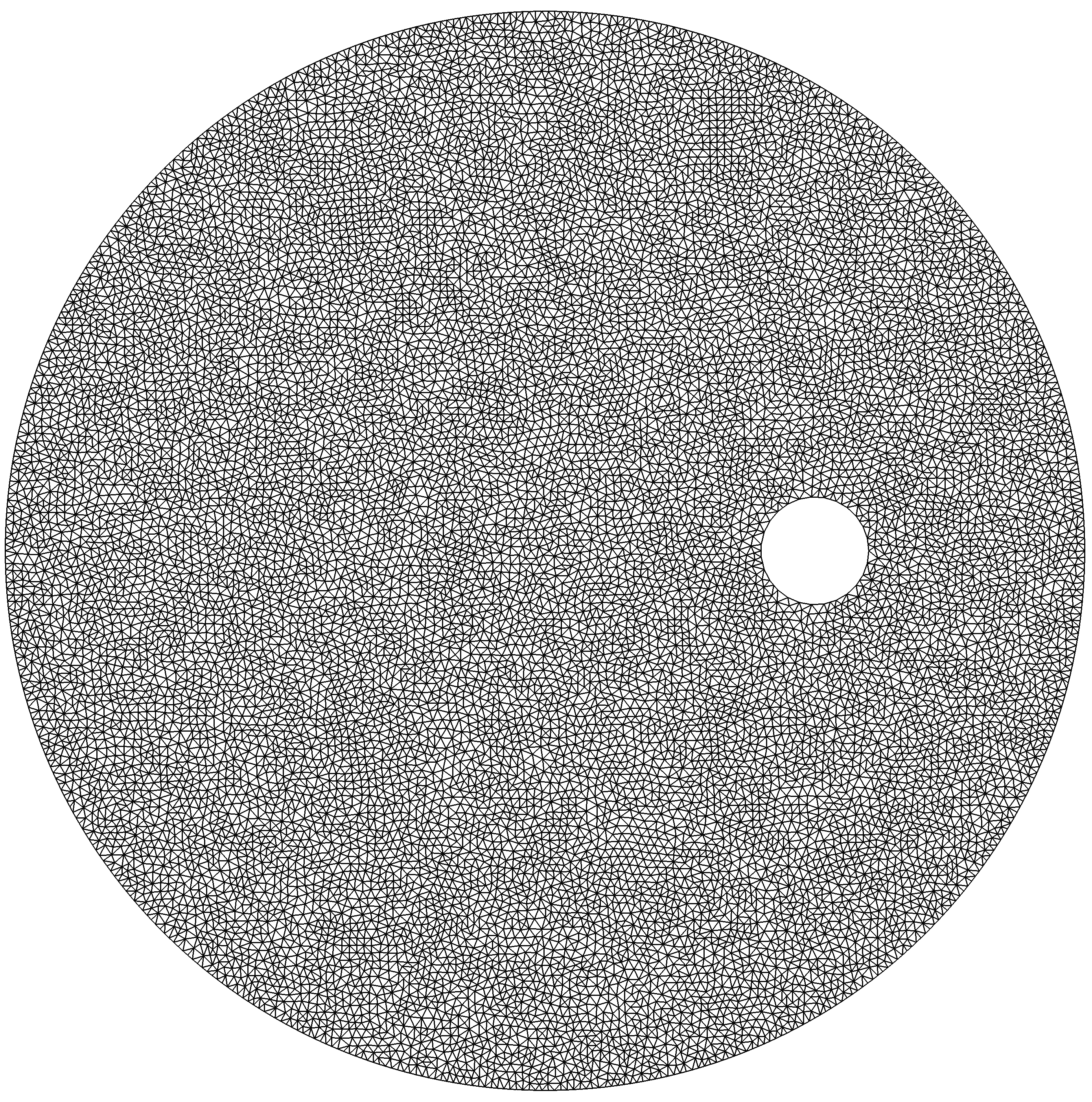}
	\caption{Spatial mesh for the finite element approximation.}
	\label{fig:mesh}
\end{figure}

%\begin{figure}[!ht]
%	\centering
%	\begin{subfigure}{0.49\linewidth}
%		\centering
%		\includegraphics[width = 1\linewidth]{velocity_singular_values.eps}
%	\end{subfigure}
%	\begin{subfigure}{0.49\linewidth}
%		\centering
%		\includegraphics[width = 1\linewidth]{pressure_singular_values.eps}
%	\end{subfigure}
%	
%	\caption{Singular values of the first 50 modes for pressure and velocity.}
%	\label{fig:singular}
%\end{figure}
The viscosity is $\nu = \frac{1}{100}$ and the counterclockwise rotational body force is given by
\begin{equation}\notag
f(x) = (-4y(1-x^2-y^2),4x(1-x^2-y^2)).
\end{equation}
No-slip boundary conditions are imposed on both cylinders. Because of the fact that $f = 0$ at the outer circle, most of the complex structures occur from the interaction of the flow with the inner cylinder. Specifically, the inner cylinder causes Von K\'{a}rm\'{a}n vortex street to develop, which then rotates and reinteracts with the inner cylinder. 

For the offline calculation, the snapshots are calculated via the $P^{2}-P^{1}$ Taylor-Hood backward Euler discretization \eqref{eqn:BE_FEM}. The flow is initialized at rest with $u_h^0 \equiv 0$. The velocity space $X_h$ and pressure space $Q_h$ have 114,792 and 14,474 degrees of freedom, respectively. We take $\Delta t=2.5e-4$ and collect velocity and pressure snapshots at every time step in the interval $[12,16]$.  The first fifty singular values for the velocity and pressure are shown in Fig. \ref{singular_value_plots}.

\begin{figure}
	\centering
	\begin{subfigure}[b]{0.48\linewidth}
		\includegraphics[width=\linewidth]{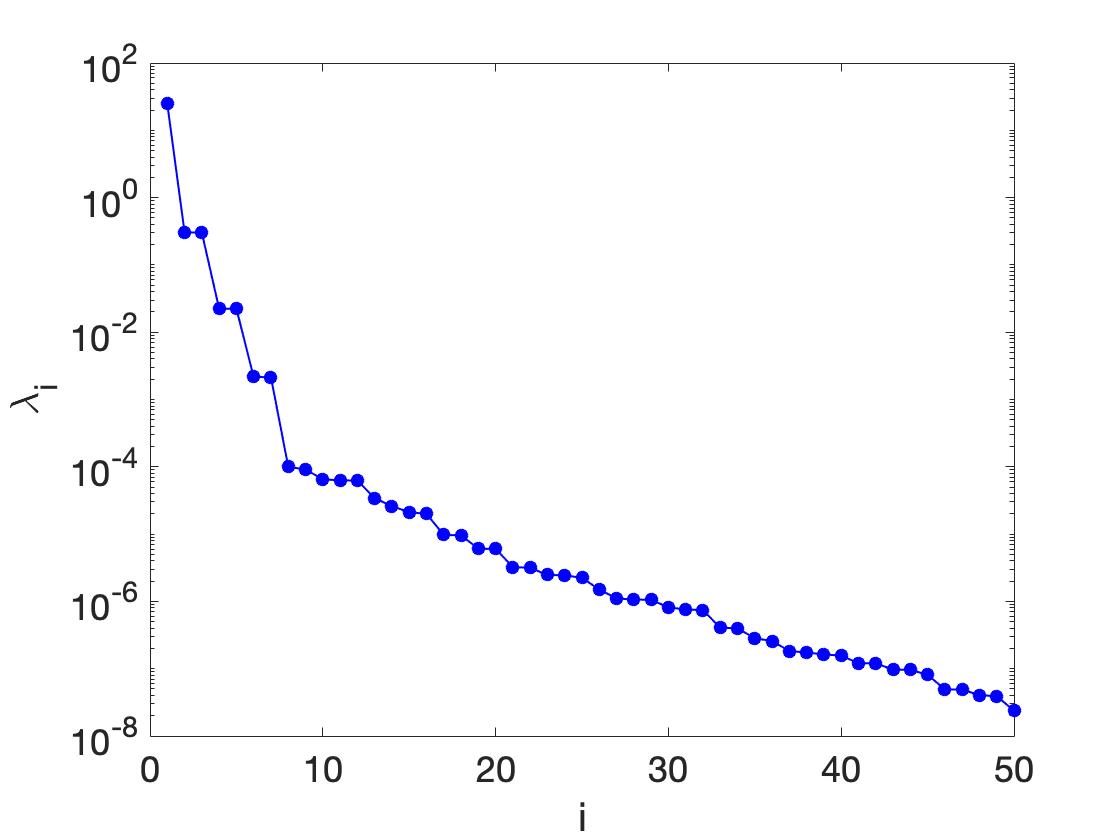}
	\end{subfigure}
	\begin{subfigure}[b]{0.48\linewidth}
		\includegraphics[width=\linewidth]{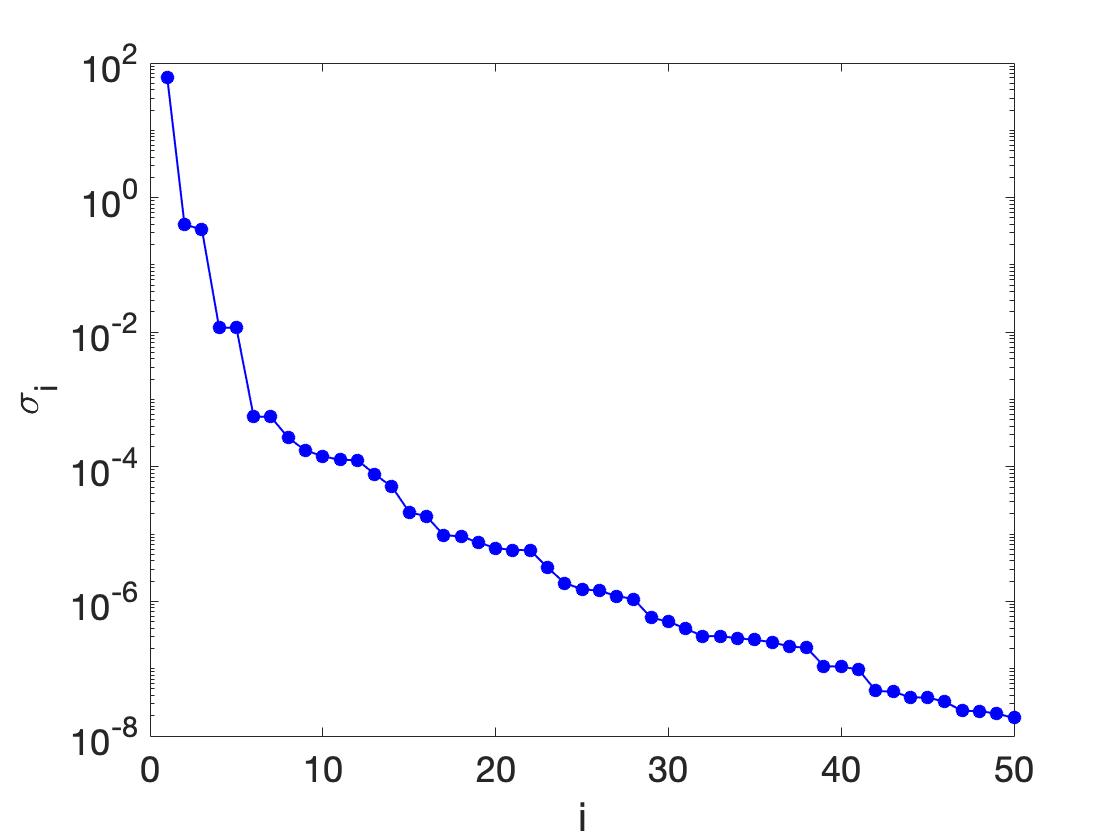}
	\end{subfigure}
	\caption{The first 50 singular values for the velocity (left) and pressure (right) modes. }
	\label{singular_value_plots}
\end{figure}

The smaller cylinder exerts a force due to lift and a force due to drag on the flow. The drag force is in opposition to the counterclockwise rotation, and the force due to lift is perpendicular to the rotation, in this case chosen to be inward. We calculate the lift and drag using the volume integral approach from \cite{J04}.

% \begin{equation}
% \text{Force due to drag} = \nu (\nabla u, \nabla v_d) + (u \cdot \nabla u,v_d) - (p, \nabla \cdot v_d)
% \end{equation}
% \begin{equation}
% \text{Force due to lift} = \nu (\nabla u, \nabla v_l) + (u \cdot \nabla u,v_l) - (p, \nabla \cdot v_l)
% \end{equation}

%\begin{equation}
%\text{Force due to drag} = \int_{\Omega} \nabla u : \nabla v_d + (u \cdot \nabla u) v_d - p(\nabla \cdot v_d) dx dy
%\end{equation}

% Here, $v_d \in H^1(\Omega)$, with $v_d = (0,0)^T$ on the outer circle and $v_d = (1,0)^T$ on the inner circle. Similarly, $v_l \in H^1(\Omega)$, with $v_l = (0,0)^T$ on the outer circle and $v_l = (0,-1)^T$ on the inner circle. This is equivalent to calculating the line integral of the stress tensor around the smaller circle, dotted with (1,0) and (0,-1) respectively. 

%The force due to drag is the force exerted by the smaller cylinder against the main flow, which is counterclockwise. We calculated this as the line integral of the stress tensor around the smaller cylinder dotted with $(0,-1)$. The force due to lift is the line integral of the stress tensor around the smaller cylinder dotted with $(1,0)$. 
%
%Letting the stress tensor $\tau = \left(\nabla u + (\nabla u)^T\right) - pI$, and $\Gamma_{small}$ be the boundary restricted to the inner cylinder, these quantities are
% \begin{equation}
% \text{Force due to drag} = -\int_{\Gamma_{small}} \tau ds \cdot e_2
% \end{equation}
%\begin{equation}
%\text{Force due to lift} = \int_{\Gamma_{small}} \tau ds \cdot e_1.
%\end{equation}

\subsection{MER Convergence Test}

In this section, we numerically verify the convergence rates for the pressure determined by the MER formulation with respect to the ROM projection errors established in Theorem \ref{theorem:pressure_convergence}. We measure the $\ell^{1}L^{2}$ error between the ROM solution $p_{m}$ and the offline solution $p_{h}$ for varying values of $r$ and $m$. The same stepsize $\Delta t=2.5e-4$ used in the offline stage is used in the calculation of the ROM solution.  

Corollary \ref{corollary:convergence} shows that the pressure error bound depends on $h$, $\Delta t$, $|||{\mathbb S}_r|||_{2}$, $\beta_{m}$, $\alpha C_r^{H_1}$, and the ROM truncation errors $\Lambda_m = \sqrt{\sum_{i=m+1}^{N_{P}}\sigma_{i}}$ and \newline $\Lambda_r =\sqrt{\sum_{i=r+1}^{N_{V}}\lambda_{i} + \sum_{i=r+1}^{N_{V}}\lambda_{i}\| \nabla \varphi_i \|^{2}}$. Because we are comparing the MER solution, $p_{m}$, to the offline solution, $p_{h}$, with the same underlying spatial and time discretization, the contribution to the error from terms involving $h$ and $\Delta t$ will be negligible.   Therefore, we examine the convergence of the pressure with respect to the terms $\beta_{m}$, $\alpha C_r^{H_1}$, $\Lambda_m$, and $\Lambda_r$.

First, we examine the convergence with respect to $\Lambda_m$. Setting $r = 50$, $\Lambda_r$ becomes negligible, therefore isolating the dependence of the pressure recovery error on $\Lambda_m$. In Fig. \ref{alphbeta_1}, we see that the inf-sup constant $\beta_{m}$ and term $\alpha C^{H_{1}}_{r}$ remain well behaved for $m=1$ to $m=50$. Thus, Corollary \ref{corollary:convergence} predicts the following convergence of $\vvvert e_p^{MER} \vvvert_{1,0}$ with respect to $\lambda_m$: 
\begin{equation}
\vvvert e_p^{MER} \vvvert_{1,0}  = \mathcal{O}(\Lambda_m).
\end{equation}
We list the error $\vvvert e_p^{MER} \vvvert_{1,0}$ for increasing $m$ in Table \ref{table:MER_lambda_m}. In addition, the corresponding power law regression is given in Fig. \ref{mer-conv-lambdam}. This regression agrees with our theoretical estimate, yielding:
\begin{equation}
\vvvert e_p^{MER} \vvvert_{1,0}  = \mathcal{O}((\Lambda_m)^{.993}).
\end{equation}

\begin{figure}
	\centering
	\begin{subfigure}{0.4\linewidth}
		\includegraphics[width=\linewidth]{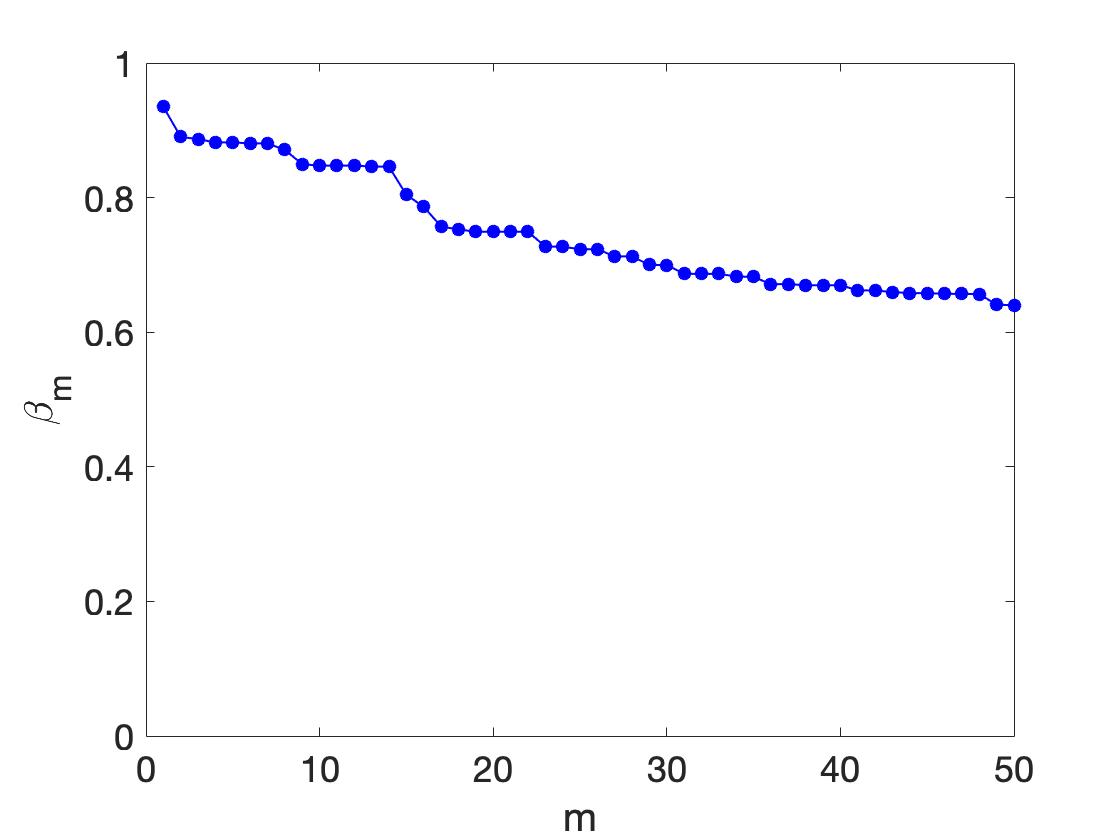}
	\end{subfigure}
	\begin{subfigure}{0.4\linewidth}
		\includegraphics[width=\linewidth]{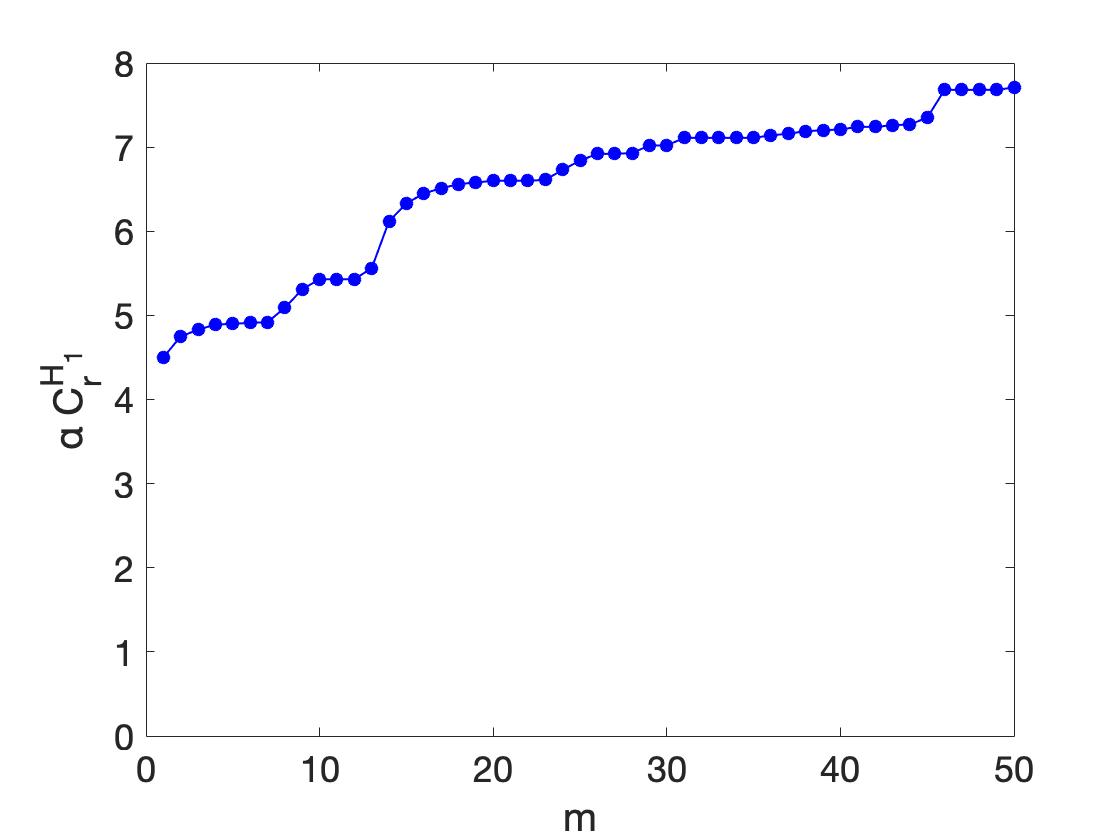}
	\end{subfigure}
	\caption{Value of the inf-sup constant, $\beta_m$, (left) and $\alpha C^{H_{1}}_{r}$ (right).}
	\label{alphbeta_1}
\end{figure}

\begin{center}
	\begin{table}
		\centering\renewcommand{\arraystretch}{1.3}
		\begin{tabular}{|c| c| c |} 
			\hline
			$m$ & $\vvvert e_p^{MER} \vvvert_{1,0}$ & $\Lambda_m$ \\
			\hline
			3 & 6.533e-01 & 1.596e-01 \\ 
			\hline
			6 & 1.594e-01 & 4.021e-02 \\
			\hline
			9 & 1.028e-01 & 2.495e-02  \\
			\hline
			12 & 5.762e-02 & 1.504e-02\\
			\hline
			15 & 3.494e-02 & 8.767e-03 \\ 
			\hline
			18 & 2.586e-02 & 6.293e-03 \\ 
			\hline
			21 & 1.928e-02 & 4.482e-03 \\ 
			\hline
			24 & 1.432e-02 & 3.039e-03 \\ 
			\hline
			27 & 1.002e-02 & 2.253e-03 \\ 
			\hline
			30 & 7.838e-03 & 1.709e-03 \\ 
			\hline
		\end{tabular}
		\caption{MER approximation errors for increasing $m$ values.}
		\label{table:MER_lambda_m}
	\end{table}
\end{center}

\begin{figure}
	\centering
	\begin{subfigure}{0.4\linewidth}
		\includegraphics[width=\linewidth]{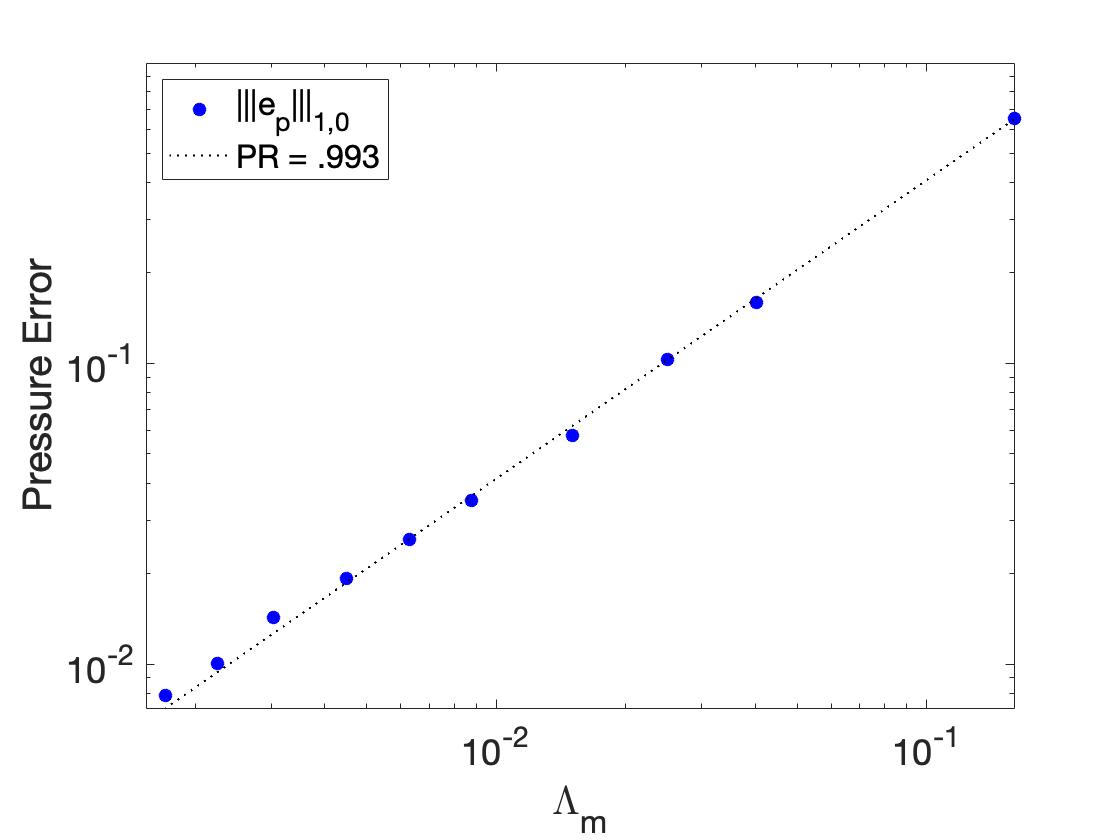}
	\end{subfigure}
	\caption{Power law regression of $\vvvert e_p^{MER} \vvvert_{1,0}$ with respect to $\Lambda_m$. }
	\label{mer-conv-lambdam}
\end{figure}

Next, we examine the convergence with respect to $\Lambda_r$. Setting $m = 50$, we isolate the relationship between pressure recovery error and $\Lambda_r$. For fixed $m,$ the inf-sup value stays constant with $\beta_m = .6402$. In Fig. \ref{alphbeta_2}, we show that $\alpha C^{H_{1}}_{r}$ grows slowly for $r=1$ to $r=50$. Corollary \ref{corollary:convergence} predicts the following convergence of $\vvvert e_p^{MER} \vvvert_{1,0}$ with respect to $\lambda_r$: 
\begin{equation}
\vvvert e_p^{MER} \vvvert_{1,0}  = \mathcal{O}(\Lambda_r).
\end{equation}
In Table \ref{table:MER_lambda_r}, we list the error $\vvvert e_p^{MER} \vvvert_{1,0}$ for increasing $r$. In addition, we give the corresponding power law regression in Fig. \ref{mer-conv-lambdar}. This agrees with our theoretical estimate, yielding:
\begin{equation}
\vvvert e_p^{MER} \vvvert_{1,0}  = \mathcal{O}((\Lambda_r)^{1.32}).
\end{equation}

\begin{figure}
	\centering
	\begin{subfigure}{0.4\linewidth}
		\includegraphics[width=\linewidth]{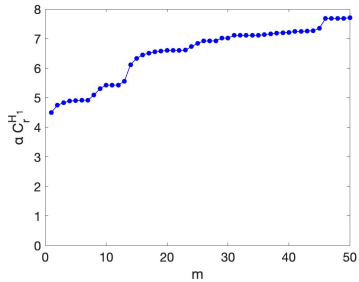}
	\end{subfigure}
	\caption{Value of $\alpha C^{H_{1}}_{r}$ with $m = 50$ and varying $r$.}
	\label{alphbeta_2}
\end{figure}

\begin{center}
	\begin{table}
		\centering\renewcommand{\arraystretch}{1.3}
		\begin{tabular}{|c| c| c |} 
			\hline
			$r$ & $\vvvert e_p^{MER} \vvvert_{1,0}$ & $\Lambda_r$ \\
			\hline
			10 & 2.500e-01& 1.922e+01  \\ 
			\hline
			15& 9.054e-02 & 1.062e+01  \\
			\hline
			20 & 4.407e-02& 8.225e+00   \\
			\hline
			25 & 2.704e-02 & 5.421e+00\\
			\hline
			30& 1.338e-02 & 2.963e+00  \\ 
			\hline
			35& 9.698e-03 & 2.652e+00  \\ 
			\hline
			40& 6.195e-03 & 2.582e+00  \\ 
			\hline
			45& 5.885e-03 & 1.302e+00  \\ 
			\hline
			50 & 3.442e-03& 1.051e+00  \\ 
			\hline
		\end{tabular}
		\caption{MER approximation errors for increasing $r$ values.}
		\label{table:MER_lambda_r}
	\end{table}
\end{center}

\begin{figure}
	\centering
	\begin{subfigure}{0.48\linewidth}
		\includegraphics[width=\linewidth]{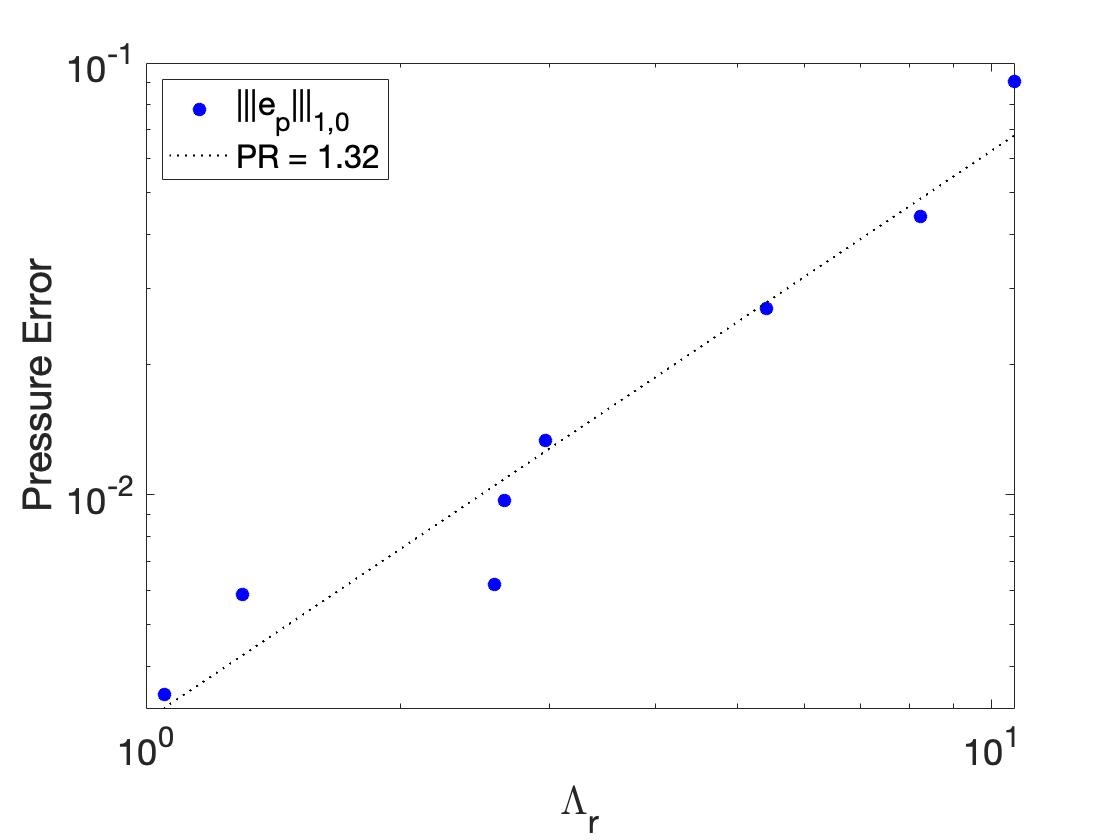}
	\end{subfigure}
	\caption{Power law regression of $\vvvert e_p^{MER} \vvvert_{1,0}$ with respect to $\Lambda_r$. }
	\label{mer-conv-lambdar}
\end{figure}

\subsection{Comparison between MER and PPE}
Lastly, we compare the performance of the PPE against the MER formulation for pressure recovery. To this end, we set $r=50$ while varying $m$ to examine the convergence of the two schemes as the size of the pressure basis increases.  For both approaches, we calculate the force due to lift, the force due to drag, and the errors $\vvvert e_p \vvvert_{1,0}$. 

Based on the discussion from Remark \ref{remark:ppe}, we expect there to be a consistency error present in the PPE pressure solution due to the Neumann boundary condition. In Table \ref{table:PPE-MER-comp}, we list the errors for the MER solution, $\vvvert e_p^{MER} \vvvert_{1,0}$ and the PPE solution for increasing values of $m$. While the MER solution error improves for increasing $m$, the error for the PPE stagnates. We can also see the error stagnation in the time evolution of the lift and drag error for $m=21$ in Fig. \ref{fig:lift_drag_errors}. In Fig. \ref{Avg_err_plots}, we show the time-averaged pressure error for the PPE and MER methods with $m = 50$. We see that the error for the PPE approach is primarily located at the boundary of the smaller offset cylinder, where the average error for the MER is evenly distributed throughout the domain.

\begin{table}
	\centering\renewcommand{\arraystretch}{1.3}
	\begin{tabular}{|c| c| c |} 
		\hline
		$m$ & $\vvvert e_p^{MER} \vvvert_{1,0}$ & $\vvvert e_p^{PPE} \vvvert_{1,0}$ \\
		\hline
		3 & 6.533e-01 & 6.754e-01 \\ 
		\hline
		6 & 1.594e-01 & 2.530e-01 \\
		\hline
		9 & 1.028e-01 & 2.247e-01  \\
		\hline
		12 & 5.762e-02 & 2.090e-01\\
		\hline
		15 & 3.494e-02 & 1.819e-01 \\ 
		\hline
		18 & 2.586e-02 & 1.781e-01 \\ 
		\hline
		21 & 1.928e-02 & 1.738e-01 \\ 
		\hline
		24 & 1.432e-02 & 1.733e-01 \\ 
		\hline
		27 & 1.002e-02 & 1.773e-01 \\ 
		\hline
		30 & 7.838e-03 & 1.756e-01 \\ 
		\hline
	\end{tabular}
	\caption{Pressure error for MER and PPE approximations with $r = 50$ and varying $m$.}
	\label{table:PPE-MER-comp}
\end{table}

\begin{figure}
	\centering
	\begin{subfigure}[b]{0.4\linewidth}
		\includegraphics[width=\linewidth]{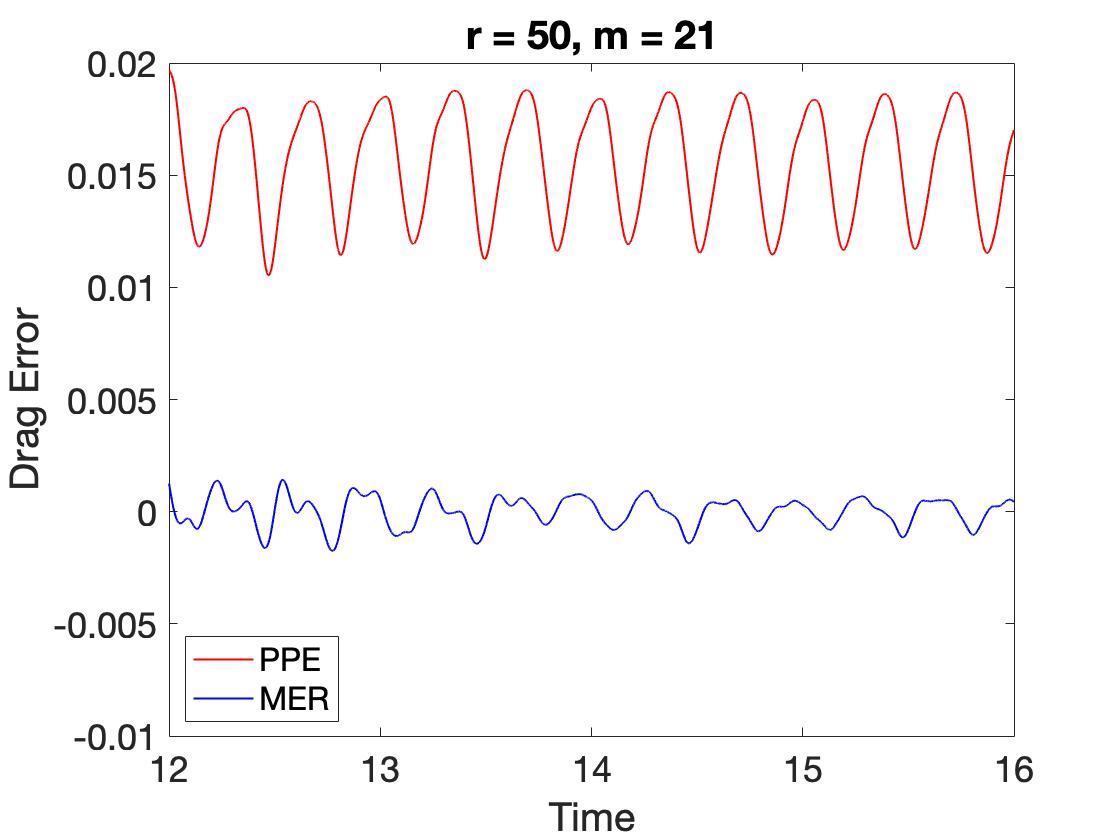}
	\end{subfigure}
	\begin{subfigure}[b]{0.4\linewidth}
		\includegraphics[width=\linewidth]{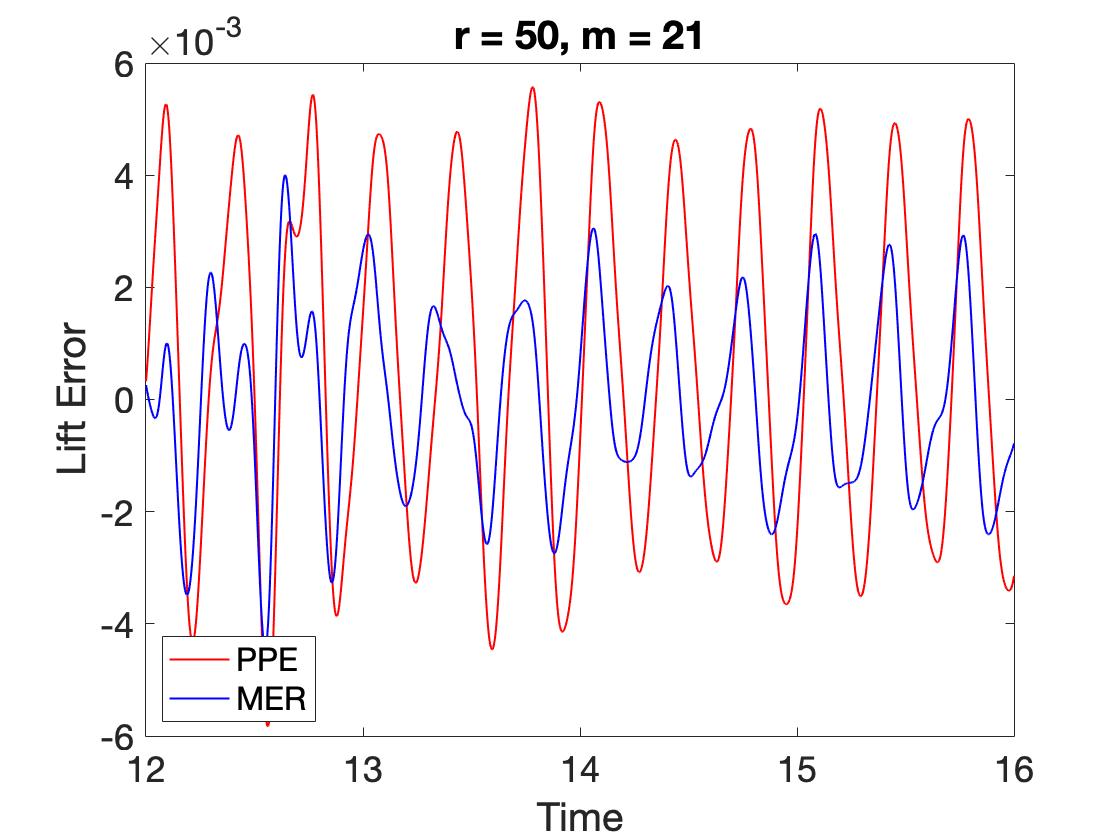}
	\end{subfigure}
	\caption{Time evolution of the drag (left) and lift (right) errors with $r = 50$ and $m =21$.}
	\label{fig:lift_drag_errors}
\end{figure}

\begin{figure}
	\centering
	\begin{subfigure}{0.48\linewidth}
		\includegraphics[width=\linewidth]{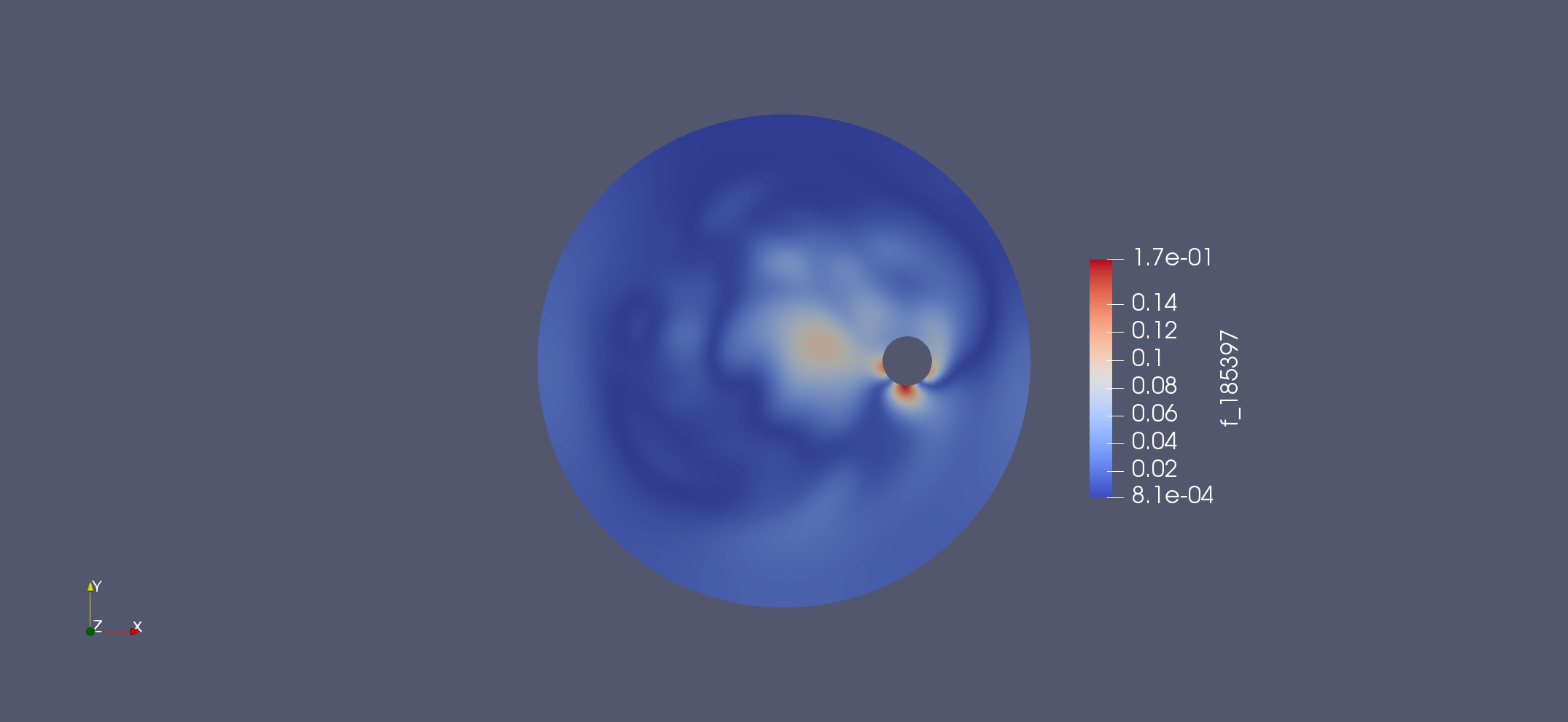}
	\end{subfigure}
	\begin{subfigure}{0.48\linewidth}
		\includegraphics[width=\linewidth]{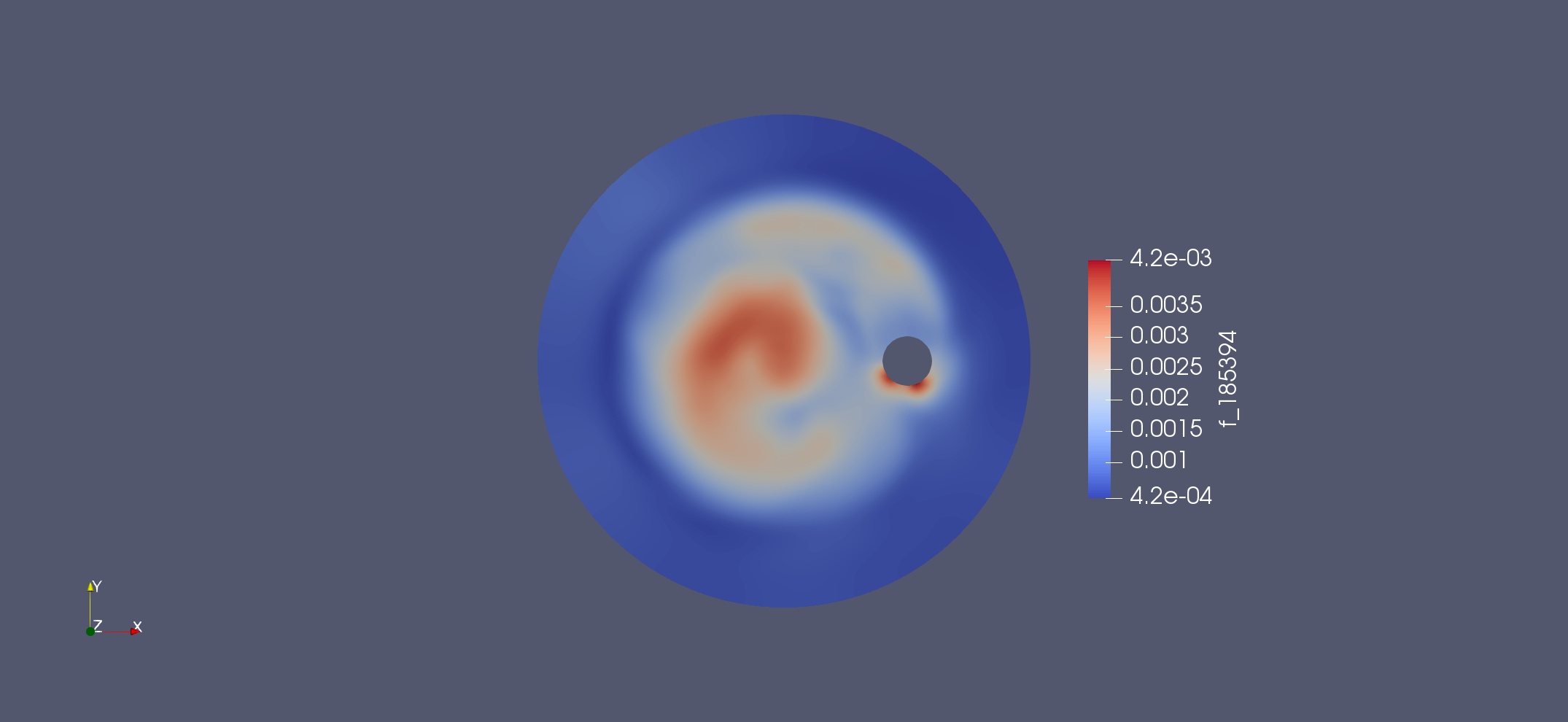}
	\end{subfigure}
	\caption{Time averaged error for the pressure solution recovered from the PPE (left) and MER (right) methods with r = m = 50.}
	\label{Avg_err_plots}
\end{figure}

\section{Conclusion}\label{sec:conclusions}
In this paper, we analyze the MER approach for recovering the pressure from a velocity-only ROM. We prove stability and convergence of the method and conduct numerical experiments illustrating the efficacy of this approach. Additionally, we perform a numerical comparison of the MER and PPE approach. We see that the Neumann boundary condition present in the PPE formulation leads to a loss of accuracy when a $C^{0}$ finite element space is used in the offline basis construction. 

In the future, we intend to pursue multiple research directions. First, we will conduct an analysis of the MER scheme for the time-dependent NSE with a parameterized domain. Second, we will investigate improving the supremizer stabilization algorithm by accounting for the computable constant, $\alpha C^{H_{1}}_{r}$, in constructing the supremizer space. Lastly, will examine whether the loss of accuracy in the PPE approach still occurs when other numerical scheme such as finite volume  or discontinuous Galerkin methods are used to collect solution snapshots for the POD basis construction.

\bibliographystyle{plain}
\bibliography{schneier}
\end{document}